\renewcommand{\thetheoremName}
\newtheorem{proposition[[]]}[theoremName]{Proposition G}
\newtheorem{theorem}{Theorem}[section]
\newtheorem{lemma}[theorem]{Lemma}
\newtheorem{proposition}[theorem]{Proposition}
\newtheorem{corollary}[theorem]{Corollary}
\theoremstyle{definition}
\newtheorem{definition}[theorem]{Definition}
\newtheorem{remark}{Remark}
\numberwithin{equation}{section}
\newcommand{\nablao}{\mbox{$\overline{\nabla}$}}
\newcommand{\Deltao}{\mbox{$\bar{\Delta}$}}
\newcommand{\R}[1]{\mbox{${\mathbb R}^{#1}$}}
\newcommand{\g}[2]{\mbox{$\langle #1 ,#2 \rangle$}}
\newcommand{\m}{\mbox{$\Sigma$}}
\def\beq{\begin{equation}}
\def\eeq{\end{equation}}
\begin{document}

\title[Geometric Analysis of Lorentzian distance]
{GEOMETRIC ANALYSIS OF LORENTZIAN DISTANCE FUNCTION ON SPACELIKE HYPERSURFACES}
\author[L. J. Al\'\i as]{Luis J. Al\'\i as$^{\#}$}
\address{Departamento de Matem\'{a}ticas, Universidad de Murcia, E-30100 Espinardo, Murcia, Spain}
\email{ljalias@um.es}
\author[A. Hurtado]{Ana Hurtado*}
\address{Departamento de Geometr\'\i a y Topolog\'\i a, Universidad de Granada, E-18071 Granada,
Spain.}
 \email{ahurtado@ugr.es}
\author[V. Palmer]{Vicente Palmer*}
\address{Departament de Matem\`{a}tiques, Universitat Jaume I, E-12071 Castell\'{o},
Spain.}
 \email{palmer@mat.uji.es}
\thanks{$^{\#}$ This research is a result of the activity developed within the framework of the
Programme in Support of Excellence Groups of the Regi\'{o}n de Murcia, Spain, by Fundaci\'{o}n S\'{e}neca,
Regional Agency for Science and Technology (Regional Plan for Science and Technology 2007-2010).
Research partially supported by MEC project MTM2007-64504, and Fundaci\'{o}n S\'{e}neca project 04540/GERM/06,
Spain.\\
\indent * Supported by Spanish MEC-DGI grant No.MTM2007-62344 and the Bancaixa-Caixa Castell\'{o}
Foundation}

\subjclass[2000]{Primary 53C50, 53C42, 31C05}

%

\keywords{Lorentzian distance function, Lorentzian index form, spacelike
hypersurface, transience, Brownian motion, mean curvature, function theory on
manifolds.}

\begin{abstract}
Some analysis on the Lorentzian distance in a spacetime with controlled sectional
(or Ricci) curvatures is done. In particular, we focus on the study of the
restriction of such distance to a spacelike hypersurface
satisfying the Omori-Yau maximum principle.
As a consequence, and under appropriate hypotheses on the
(sectional or Ricci) curvatures of the ambient spacetime, we obtain sharp estimates
for the mean curvature of those hypersurfaces. Moreover, we also give a suficient
condition for its hyperbolicity.
\end{abstract}

\maketitle

\section{Introduction}

Let $M^{n+1}$ be a $(n+1)$-dimensional spacetime, and consider either
$d_p$, the Lorentzian distance from a fixed point $p \in M$, or $d_N$, the Lorentzian distance from a fixed achronal
spacelike hypersurface $N$. Under
suitable conditions those Lorentzian distances are
differentiable at least in a \lq\lq sufficiently near
chronological future" of the point $p$ or of the hypersurface $N$, so that some classical analysis
can be done on those functions.

In this setting, over the past 25 years comparison theory and geometric analysis of the distance function has been
effectively extended
and applied to Lorentzian manifolds. In particular, it played an
important role in the proof of the Lorentzian splitting theorem, the
spacetime analogue of the Cheeger-Gromoll splitting theorem, first established by Galloway \cite{Ga1} and by
Beem, Ehrlich, Markvorsen and Galloway \cite{BEMG}, and subsequently improved by Eschenburg \cite{Es},
Galloway \cite{Ga2}, and Newman \cite{Ne}. In those works, one needs to understand the geometry, i.e., mean curvature,
of the spacelike level sets of the Lorentzian distance function from a fixed point. As in the Riemannian case, this is
analytically expressed in terms of the (Lorentzian)  Laplacian (called also d'Alembertian, in the Lorentzian case) of
the distance function. More recently, in the paper \cite{EGK}, Erkekoglu, Garc\'\i a-R\'\i o and Kupeli obtained Hessian and Laplacian comparison theorems for those
Lorentzian distance functions from comparisons of the sectional curvatures of the Lorentzian manifold, following the
lines of Greene and Wu in their classical book \cite{GreW}, where it were obtained  the same comparison for the Hessian
and the Laplacian of the Riemannian distance function from estimates of sectional curvatures.

In this paper we shall study the Lorentzian distance function
restricted to a spacelike hypersurface $\Sigma^n$ immersed into $M^{n+1}$. In particular, we shall consider spacelike
hypersufaces whose image under the immersion is bounded in the ambient
spacetime, in the sense that the Lorentzian distance either from a fixed
point or from $N$ to the hypersurface is bounded from above.

Inspired by the works \cite{AA1}, \cite{AA2}  and \cite{Wu}, we derive sharp estimates for the mean curvature of such
hypersurfaces, provided that either {\bf (i)} the Ricci curvature of the ambient spacetime $M^{n+1}$ is bounded from below on
timelike directions (Theorem \ref{mainTh1UpperB} and Theorem \ref{mainTh1UpperBachro}), which obviously includes the case where the sectional curvatures of
all timelike planes of $M^{n+1}$ are bounded from above, or {\bf (ii)} the sectional curvatures of all timelike planes of
$M^{n+1}$ are bounded from below (Theorem \ref{mainTh1LowerB} and Theorem \ref{mainTh1LowerBachro}), or {\bf (iii)} the sectional curvature of $M^{n+1}$
is constant (Theorem \ref{mainTh2UpperLowB}), widely extending previous results in the previous papers. In particular, we
establish a Bernstein-type result for the Lorentzian distance, (see Corollary \ref{corlevelsets}), which improves
Theorem 1 in \cite{AA1} (see Remark \ref{remark1} and Corollary \ref{coro3}) and extends it to arbitrary Lorentzian space
forms.

On the other hand, we also study some function theoretic properties on mean-curvature-controlled spacelike hypersurfaces,
via the control of the Hessian of the Lorentzian distance, following the lines in \cite{MP1} and \cite{MP2}. In particular,
we show that spacelike hypersurfaces with mean curvature bounded from above are hyperbolic, in the sense that they admit a
non-constant positive superharmonic function, when the ambient spacetime has timelike sectional curvatures bounded from
below (see Theorem \ref{MainTh4Hyperbolicity} and Theorem \ref{MainTh4Hyperbolicityachro}).

\subsection{Outline of the paper}
We devote Section 2  and Section 3 to presenting the basic concepts involved and establishing our comparison
analysis of the Hessian of the Lorentzian distance function from a point, respectively, together with the basic comparison
inequalities for the Laplacian.
In Section 4 we state and prove the sharp estimates for the mean curvature of spacelike hypersurfaces bounded by a level set of the Lorentzian distance function from a point. In Section 5 we extend
our geometric analysis to the Lorentzian distance function from an achronal spacelike hypersurface, establishing the corresponding
results for that function. Finally the proofs of hyperbolicity are presented in Section 6.

\subsection{Acknowledgements}
This work has been partially done during the stay of the third named author at the Department of Mathematics of Universidad de Murcia and  the Max Planck Institut für Mathematik in Bonn,  where he enjoyed part of a sabbatical
leave. He would like to thank the staff at these institutions for the cordial hospitality during this period. The
authors also thank to the referee for valuable suggestions which improved the paper.

\section{Preliminaries}
Consider $M^{n+1}$ an $(n+1)$-dimensional spacetime, that is, a time-oriented Lo\-ren\-tzian manifold of dimension
$n+1\geq 2$. Let $p,q$ be points in $M$. Using the standard terminology and notation from Lorentzian geometry, one says
that $q$ is in the chronological future of $p$, written $p\ll q$, if there exists a future-directed timelike curve from
$p$ to $q$. Similarly, $q$ is in the causal future of $p$, written $p<q$, if there exists a future-directed causal
(i.e., nonspacelike) curve from $p$ to $q$. Obviously, $p\ll q$ implies $p<q$. As usual, $p\leq q$ means that either $p<q$ or $p=q$.

For a subset $S\subset M$, one defines the chronological future of $S$ as
\[
I^+(S)=\{ q\in M : p\ll q \mbox{ for some } p\in S\},
\]
and the causal future of $S$ as
\[
J^+(S)=\{ q\in M : p\leq q \mbox{ for some } p\in S\}.
\]
Thus $S\cup I^+(S)\subset J^+(S)$.

In particular, the chronological future $I^+(p)$ and the causal future $J^+(p)$ of a point $p\in M$ are
\[
I^+(p)=\{ q\in M : p\ll q \}, \quad \mbox{and} \quad J^+(p)=\{ q\in M : p\leq q \}.
\]
As is well-known, $I^+(p)$ is always open, but $J^+(p)$ is neither open nor closed in general.

If $q\in J^+(p)$, then the Lorentzian distance $d(p,q)$ is the supremum of the Lorentzian lengths of all
the future-directed causal curves from $p$ to $q$ (possibly, $d(p,q)=+\infty$).
If $q\notin J^+(p)$, then the Lorentzian distance $d(p,q)=0$ by definition.
Specially, $d(p,q)>0$ if and only if $q\in I^+(p)$.

The Lorentzian distance function $d:M\times M\rightarrow [0,+\infty]$ for an arbitrary
spacetime may fail to be continuous in general, and may also fail to be finite valued. As a matter of fact, globally
hyperbolic spacetimes turn out to be the natural class of spacetimes for which the Lorentzian distance function is
finite-valued and continuous.

Given a point $p\in M$, one can define the Lorentzian distance function $d_p:M\rightarrow [0,+\infty]$ with
respect to $p$ by
\[
d_p(q)=d(p,q).
\]
In order to guarantee the smoothness of $d_p$, we need to restrict this function on certain special
subsets of $M$. Let $T_{-1}M|_{p}$ be the fiber of the unit future observer bundle of $M$ at $p$, that is,
\[
T_{-1}M|_{p}=\{v\in T_pM : v \mbox{ is a future-directed timelike unit vector} \}.
\]
Define the function $s_p:T_{-1}M|_{p}\rightarrow [0,+\infty]$ by
\[
s_p(v)=\sup\{ t\geq 0 : d_p(\gamma_v(t))=t\},
\]
where $\gamma_v:[0,a)\rightarrow M$ is the future inextendible geodesic starting at $p$ with initial velocity $v$.
Then, one can define
\[
\tilde{\mathcal{I}}^{+}(p)=\{ tv: \mbox{ for all } v\in T_{-1}M|_{p} \mbox{ and } 0<t<s_p(v) \}
\]
and consider the subset $\mathcal{I}^{+}(p)\subset M$ given by
\[
\mathcal{I}^{+}(p)=\mathrm{exp}_p(\mathrm{int}(\tilde{\mathcal{I}}^{+}(p)))\subset I^{+}(p).
\]
Observe that
\[
\mathrm{exp}_p: \mathrm{int}(\tilde{\mathcal{I}}^{+}(p))\rightarrow \mathcal{I}^{+}(p)
\]
is a diffeomorphism and $\mathcal{I}^{+}(p)$ is an open subset (possible empty).

For instance, when $c\geq 0$, the
Lorentzian space form $M^{n+1}_c$ is globally hyperbolic and geodesically complete, and every future directed
timelike unit geodesic $\gamma_c$ in $M^{n+1}_c$ realizes the Lorentzian distance between its points. In particular, if
$c\geq 0$ then $\mathcal{I}^+(p)=I^+(p)$ for every point $p\in M^{n+1}_c$ (see \cite[Remark 3.2]{EGK}). However,
when $c<0$ it can be easily seen that $\mathcal{I}^+(p)=\emptyset$ for every point $p\in\mathbb{H}^{n+1}_1$, where
$\mathbb{H}^{n+1}_1$ is the anti-de-Sitter space, that is, the standard model of a simply connected Lorentzian space form
with negative curvature. In fact, at each point $p\in\mathbb{H}^{n+1}_1$, it holds that every future directed timelike geodesic in
$\mathbb{H}^{n+1}_1$ starting at $p$ is closed, which implies that $d(p,\gamma(t))=+\infty$
for every $t\in\R{}$. The following result summarizes the main properties about the Lorentzian distance function (see
\cite[Section 3.1]{EGK}).

\begin{lemma}
\label{lemaEduardo1}
Let $M$ be a spacetime and $p\in M$.
\begin{enumerate}
\item If $M$ is strongly causal at $p$, then $s_p(v)>0$ for all $ v\in T_{-1}M|_{p}$ and
$\mathcal{I}^{+}(p)\neq\emptyset$.
\item If $\mathcal{I}^{+}(p)\neq\emptyset$, then the Lorentzian distance function $d_p$ is smooth on
$\mathcal{I}^{+}(p)$ and its gradient $\nablao d_p$ is a past-directed timelike (geodesic) unit vector field on $\mathcal{I}^{+}(p)$.
\end{enumerate}
\end{lemma}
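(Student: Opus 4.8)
The plan is to treat the two assertions separately, both resting on the behaviour of timelike geodesics inside a convex normal neighbourhood of $p$ together with the Lorentzian Gauss lemma.

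For part (1), I would start from the fact that strong causality at $p$ provides a neighbourhood basis of causally convex normal neighbourhoods: one may choose a convex normal neighbourhood $U$ of $p$ so small that every causal curve with endpoints in $U$ remains entirely inside $U$. Fix $v\in T_{-1}M|_{p}$. For sufficiently small $t>0$ the geodesic $\gamma_v(t)=\exp_p(tv)$ lies in $U$, and within a convex normal neighbourhood a future-directed timelike geodesic is the unique causal curve joining its endpoints and maximizes Lorentzian length among all causal curves in $U$; hence it realizes the local Lorentzian distance, which equals $t$. Causal convexity then forces every causal curve from $p$ to $\gamma_v(t)$ to stay in $U$, so the local and global distances agree and $d_p(\gamma_v(t))=t$. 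This yields $s_p(v)>0$ for every $v$, whence $\mathrm{int}(\tilde{\mathcal{I}}^{+}(p))$ contains points $tv$ with $t$ small and $\mathcal{I}^{+}(p)=\exp_p(\mathrm{int}(\tilde{\mathcal{I}}^{+}(p)))$ is nonempty.

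For part (2), I would use that $\exp_p$ restricts to a diffeomorphism from $\mathrm{int}(\tilde{\mathcal{I}}^{+}(p))$ onto $\mathcal{I}^{+}(p)$, so that each $q\in\mathcal{I}^{+}(p)$ is written uniquely as $q=\exp_p(tv)$ with $0<t<s_p(v)$ and, by the definition of $\tilde{\mathcal{I}}^{+}(p)$, one has $d_p(q)=t$. Thus $d_p$ coincides with the radial parameter of the Lorentzian normal coordinates supplied by $\exp_p^{-1}$ and is therefore smooth on $\mathcal{I}^{+}(p)$, that parameter being a smooth function of the timelike normal coordinates away from the null cone. To identify the gradient, I would differentiate the identity $d_p(\gamma_v(t))=t$ along the radial geodesic to obtain $\langle \nablao d_p, \dot\gamma_v\rangle = 1$; the Lorentzian Gauss lemma shows that $\nablao d_p$ is proportional to $\dot\gamma_v$, say $\nablao d_p=\lambda\dot\gamma_v$. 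Since $\dot\gamma_v$ is a future-directed timelike unit vector, $\langle \dot\gamma_v, \dot\gamma_v\rangle = -1$, so $\lambda=-1$ and $\nablao d_p=-\dot\gamma_v$. This is a unit field with $\langle \nablao d_p, \nablao d_p\rangle = -1$, it is past-directed because $-\dot\gamma_v$ reverses the time orientation of the future-directed $\dot\gamma_v$, and its integral curves are precisely the radial geodesics, so it is geodesic.

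The delicate point is the coincidence of the local and global Lorentzian distances in part (1). In an arbitrary spacetime a causal curve may leave a small neighbourhood of $p$ and re-enter it, possibly accumulating greater Lorentzian length than the short radial geodesic, so that $d_p(\gamma_v(t))$ could fail to equal $t$; strong causality is exactly the hypothesis excluding such returning causal curves and thereby confining the competition to curves inside $U$, where the radial geodesic is maximal. Once this is secured, part (2) is a routine application of the Lorentzian Gauss lemma, the sign in $\nablao d_p=-\dot\gamma_v$ being the only feature genuinely specific to the Lorentzian signature.
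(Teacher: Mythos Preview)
The paper does not supply its own proof of this lemma; it merely records the statement and refers the reader to \cite[Section 3.1]{EGK}. Your argument is the standard one found there (and in \cite{BEE}): strong causality furnishes arbitrarily small causally convex neighbourhoods inside a convex normal neighbourhood, forcing the radial timelike geodesic to globally maximize length for short times, which gives $s_p(v)>0$; and on $\mathcal{I}^+(p)$ the identity $d_p\circ\exp_p(tv)=t$ together with the Lorentzian Gauss lemma yields smoothness and $\nablao d_p=-\gamma_v'$. So your proposal is correct and matches the cited approach; the only point worth tightening is the phrase ``causally convex normal neighbourhood'': strong causality gives causally convex open sets, and one should state explicitly that such a set can be taken inside a fixed convex normal neighbourhood (rather than asserting outright that a single neighbourhood is simultaneously convex normal and causally convex, which, while true, requires a separate argument).
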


\section{Analysis of the Lorentzian distance function from a point}
\label{comp}
This section has two parts: in the first one, we are going to present estimates for the Hessian of the Lorentzian distance from a point in a Lorentzian manifold in terms of bounds for its timelike sectional curvatures.
In the second part, we obtain estimates for the Hessian and the Laplacian of the Lorentzian distance from a point restricted to a spacelike hypersurface, based in the previous comparisons.

For every $c\in\R{}$, let us define
\[
f_c(s)=
\begin{cases}
\sqrt{c}\coth(\sqrt{c}\,s) &\text{if $c>0$ and $s>0$}\\
\phantom{\sqrt{c}} 1/s &\text{if $c=0$ and $s>0$}\\
\sqrt{-c}\cot(\sqrt{-c}\,s) &\text{if $c<0$ and $0<s<\pi/\sqrt{-c}$}.
\end{cases}
\]
The function $f_c$ arises naturally when computing the index form of a timelike geodesic in a Lorentzian space form of constant curvature $c$, $M^{n+1}_c$. Indeed,
let $\gamma_c:[0,s]\rightarrow M^{n+1}_c$ be a future directed timelike unit geodesic (with $s<\pi/\sqrt{-c}$ when $c<0$), and let $J_c$ be
a Jacobi field along $\gamma_c$ such that $J_c(0)=0$ and $J_c(s)=x\perp\gamma'_c(s)$. Using the Jacobi equation along $\gamma_c$, it is straightforward to see that $J_c(t)$ is given by
$J_c(t)=\mathrm{s}_c(t)Y_c(t)$, where
\beq
\label{sc}
\mathrm{s}_c(t)=
\begin{cases}
\frac{\sinh(\sqrt{c}\,t)}{\sinh(\sqrt{c}\,s)} & \text{if $c>0$ and $0\leq t\leq s$}\\
\phantom{\sqrt{c}} t/s  & \text{if $c=0$ and $0\leq t\leq s$}\\
\frac{\sin(\sqrt{-c}\,t)}{\sin(\sqrt{-c}\,s)} &\text{if $c<0$ and $0\leq t\leq s<\pi/\sqrt{-c}$},
\end{cases}
\eeq
and $Y_c(t)$ is the parallel vector field along $\gamma_c$ such that $Y_c(s)=x$ (and hence, $\g{Y_c(t)}{Y_c(t)}_c=\g{x}{x}$ for every $t$). Thus,
\[
\g{J_c(t)}{J_c(t)}_c=\mathrm{s}_c(t)^2\g{x}{x} \quad \mathrm{and} \quad
\g{J'_c(t)}{J'_c(t)}_c=\mathrm{s}'_c(t)^2\g{x}{x},
\]
and we can compute explicitly the index form of $\gamma_c$ on $J_c$ by
\begin{eqnarray}
\label{eq3}
I_{\gamma_c}(J_c,J_c) & = & -\int_0^s\left(\g{J'_c(t)}{J'_c(t)}_c+c\g{J_c(t)}{J_c(t)}_c\right)dt\\
\nonumber {} & = & -\int_0^s\left(\mathrm{s}'_c(t)^2+c\mathrm{s}_c(t)^2\right)dt\ \g{x}{x}=-f_c(s)\g{x}{x}.
\end{eqnarray}

On the other hand, it is worth pointing out that $f_c(s)$ is the future mean curvature of the Lorentzian sphere of radius $s$ in
the Lorentzian space form $M^{n+1}_c$ (when $\mathcal{I}^{+}(p)\neq\emptyset$), that is, the level set
\[
\Sigma_c(s)=\{ q \in\mathcal{I}^{+}(p) : d_{p}(q)=s \}\subset M^{n+1}_c.
\]
To see this note that the future-directed timelike unit normal field globally defined on $\Sigma_c(s)$ is the gradient
$-\nablao d_p$

Our first result assumes that the sectional curvatures of the timelike planes of $M$ are bounded from above by a constant
$c$.
\begin{lemma}
\label{comparacion1}
Let $M^{n+1}$ be an $(n+1)$-dimensional spacetime such that $K_M(\Pi)\leq c$, $c\in\R{}$,
for all timelike planes in $M$. Assume that there exists a point $p\in M$ such that
$\mathcal{I}^{+}(p)\neq\emptyset$, and let $q\in\mathcal{I}^{+}(p)$,
(with  $d_p(q)<\pi/\sqrt{-c}$ when $c<0$). Then for every spacelike vector $x\in T_{q}M$ orthogonal to $\nablao d_p(q)$ it
holds that
\begin{equation}
\label{eq31a}
\nablao^2d_p(x,x)\geq -f_c(d_p(q))\g{x}{x},
\end{equation}
where $\nablao^2$ stands for the Hessian operator on $M$. When $c<0$ but $d_p(q)\geq\pi/\sqrt{-c}$, then it still holds
that
\begin{equation}
\label{eq31b}
\nablao^2d_p(x,x)\geq -\frac{1}{d_p(q)}\g{x}{x}\geq-\frac{\sqrt{-c}}{\pi}\g{x}{x}.
\end{equation}
\end{lemma}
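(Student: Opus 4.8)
The plan is to reduce the Hessian of $d_p$ at $q$ to a Lorentzian index form along the radial geodesic and then compare it term by term with the model. Set $r=d_p(q)$ and let $\gamma:[0,r]\to\mathcal{I}^{+}(p)$ be the future-directed timelike unit geodesic with $\gamma(0)=p$ and $\gamma(r)=q$; since $q\in\mathcal{I}^{+}(p)$, Lemma \ref{lemaEduardo1} guarantees that $d_p$ is smooth near $q$ with $\gamma'(r)=-\nablao d_p(q)$, while the fact that $\exp_p$ is a diffeomorphism onto $\mathcal{I}^{+}(p)$ ensures that $\gamma$ carries no conjugate points on $(0,r)$. Given a spacelike $x\in T_qM$ with $\g{x}{\nablao d_p(q)}=0$ (equivalently $x\perp\gamma'(r)$), the first step is to record the variational identity $\nablao^2 d_p(x,x)=I_\gamma(J,J)$, where $J$ is the (necessarily $\gamma'$-orthogonal) Jacobi field along $\gamma$ with $J(0)=0$ and $J(r)=x$. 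This is the Lorentzian counterpart, established in \cite{EGK} along the lines of \cite{GreW}, of the classical second-variation description of the Hessian of a distance function, and in the constant-curvature model it returns, via \eqref{eq3}, the sharp value $-f_c(r)\g{x}{x}$.

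The second ingredient is the index lemma for timelike geodesics: since $\gamma$ has no conjugate points on $(0,r)$, among all piecewise smooth vector fields $V$ along $\gamma$ with $V\perp\gamma'$, $V(0)=0$ and $V(r)=x$, the Jacobi field $J$ maximizes the index form, so that $\nablao^2 d_p(x,x)=I_\gamma(J,J)\geq I_\gamma(V,V)$. The maximizing, rather than minimizing, character is the one genuinely Lorentzian point here, forced by the overall sign in the definition of $I_\gamma$; equivalently, on the positive-definite bundle $\gamma'^{\perp}$ the functional $-I_\gamma$ is a Riemannian-type index form minimized by $J$. It therefore suffices to produce one convenient competitor $V$.

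The natural choice is $V(t)=\mathrm{s}_c(t)E(t)$, with $\mathrm{s}_c$ as in \eqref{sc} and $E$ the parallel field along $\gamma$ with $E(r)=x$; then $E\perp\gamma'$ and $\g{E(t)}{E(t)}=\g{x}{x}$ for all $t$, while $V(0)=0$ and $V(r)=x$ as required (here the hypothesis $r<\pi/\sqrt{-c}$ when $c<0$ is exactly what keeps $\mathrm{s}_c$ positive on $(0,r)$ and $f_c(r)$ finite). Using $\g{V'}{V'}=\mathrm{s}_c'^2\g{x}{x}$ together with $\g{R(V,\gamma')\gamma'}{V}=\mathrm{s}_c^2\g{R(E,\gamma')\gamma'}{E}=-\mathrm{s}_c^2\,K_M(\gamma',E)\g{x}{x}\geq -c\,\mathrm{s}_c^2\g{x}{x}$ — the inequality being precisely $K_M\leq c$ on the timelike plane spanned by $\gamma'$ and $E$, together with $\g{x}{x}>0$ — one bounds, term by term,
\[
I_\gamma(V,V)\ \geq\ -\int_0^r\bigl(\mathrm{s}_c'(t)^2+c\,\mathrm{s}_c(t)^2\bigr)\,dt\ \g{x}{x}\ =\ -f_c(r)\g{x}{x},
\]
where the last equality is \eqref{eq3}. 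Combining this with the index-lemma inequality gives \eqref{eq31a}.

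For the remaining case $c<0$ with $r=d_p(q)\geq\pi/\sqrt{-c}$, the point is simply that $K_M\leq c<0$ forces $K_M\leq 0$, so the already proven case $c=0$ applies (for it $f_0(s)=1/s$ is defined for every $s>0$, with no range restriction) and yields $\nablao^2 d_p(x,x)\geq -\tfrac{1}{r}\g{x}{x}$; since $r\geq\pi/\sqrt{-c}$ gives $1/r\leq\sqrt{-c}/\pi$, this is exactly \eqref{eq31b}. The only delicate point in the whole argument is the correct sign in the index comparison, namely that the Jacobi field maximizes $I_\gamma$; once this is secured, the curvature hypothesis enters in a single monotone step and the remainder is the explicit model computation already carried out in \eqref{eq3}.
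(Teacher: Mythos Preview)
Your proof is correct and follows essentially the same route as the paper's: express $\nablao^2 d_p(x,x)$ as the index form $I_\gamma(J,J)$ via \cite[Proposition~3.3]{EGK}, invoke the Lorentzian index lemma (maximality of the Jacobi field) to pass to the test field $\mathrm{s}_c(t)E(t)$ with $E$ parallel, then use $K_M\leq c$ and the explicit model computation \eqref{eq3}; the case $c<0$ with $d_p(q)\geq\pi/\sqrt{-c}$ is handled identically by falling back to $c=0$. The only cosmetic difference is that the paper first applies the curvature bound to the index form of an arbitrary competitor and then specializes, whereas you specialize first and bound afterwards---the content is the same.
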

\begin{proof}
The proof follows the ideas of the proof of \cite[Theorem 3.1]{EGK}. Let
$v=\mathrm{exp}_p^{-1}(q)\in\mathrm{int}(\tilde{\mathcal{I}}^+(p))$ and let
$\gamma(t)=\mathrm{exp}_p(tv)$, $0\leq t<s_p(v)$, the radial future directed unit timelike geodesic with $\gamma(0)=p$
and $\gamma(s)=q$, where $s=d_p(q)$. Recall that $\gamma'(s)=-\nablao d_p(q)$, (see \cite[Proposition 3.2]{EGK}).
From \cite[Proposition 3.3]{EGK}, we know that
\[
\nablao^2d_p(x,x)=-\int_0^s(\g{J'(t)}{J'(t)}-\g{R(J(t),\gamma'(t))\gamma'(t)}{J(t)})dt=I_\gamma(J,J)
\]
where $J$ is the (unique) Jacobi field along $\gamma$ such that $J(0)=0$ and $J(s)=x$. Since
$\gamma:[0,s]\rightarrow\mathcal{I}^+(p)$ and
$\mathrm{exp}_p: \mathrm{int}(\tilde{\mathcal{I}}^{+}(p))\rightarrow \mathcal{I}^{+}(p)$ is a diffeomorphism, then there
is no conjugate point of $\gamma(0)$ along the geodesic $\gamma$. Therefore, by the maximality of the index of Jacobi
fields \cite[Theorem 10.23]{BEE} we get that
\begin{equation}
\label{eq4}
\nablao^2d_p(x,x)=I_\gamma(J,J)\geq I_{\gamma}(X,X).
\end{equation}
for every vector field $X$ along $\gamma$ such that $X(0)=J(0)=0$, $X(s)=J(s)=x$ and $X(t)\perp\gamma'(t)$ for every $t$.
Observe that, for all these vector fields $X$,
\begin{eqnarray*}
I_{\gamma}(X,X) & = & -\int_0^s(\g{X'(t)}{X'(t)}-\g{R(X(t),\gamma'(t))\gamma'(t)}{X(t)})dt\\
{} & = & -\int_0^s(\g{X'(t)}{X'(t)}+K(t)\g{X(t)}{X(t)})dt,
\end{eqnarray*}
where $K(t)$ stands for the sectional curvature of the timelike plane spanned by $X(t)$ and $\gamma'(t)$. Thus,
$K(t)\leq c$, and from (\ref{eq4}) we obtain that
\begin{equation}
\label{eq5}
\nablao^2d_p(x,x)\geq -\int_0^s(\g{X'(t)}{X'(t)}+c\g{X(t)}{X(t)})dt,
\end{equation}

Assume now that $s=d_p(q)<\pi/\sqrt{-c}$ if $c<0$, and let $Y(t)$ be the (unique) parallel vector field along $\gamma$ such
that $Y(s)=x$. Then, we may define $X(t)=\mathrm{s}_c(t)Y(t)$, where $\mathrm{s}_c(t)$ is the function given by (\ref{sc}).
Observe that $X$ is orthogonal to $\gamma$ and $X(0)=0$ and $X(s)=x$. Moreover,
\[
\g{X(t)}{X(t)}=\mathrm{s}_c(t)^2\g{x}{x} \quad \mathrm{and} \quad
\g{X'(t)}{X'(t)}=\mathrm{s}'_c(t)^2\g{x}{x}.
\]
Therefore, using $X$ in (\ref{eq5}) we get that
\[
\nablao^2d_p(x,x)\geq
-\int_0^s\left(\mathrm{s}'_c(t)^2+c\mathrm{s}_c(t)^2\right)dt\ \g{x}{x}=-f_c(s)\g{x}{x}.
\]
This finishes the proof of \ref{eq31a}.
Finally, when $c<0$ but $d_p(q)\geq\pi/\sqrt{-c}$, then $K_M(\Pi)\leq c<0$ and we may apply our estimate (\ref{eq31a})
for the constant $c=0$, so that
\[
\nablao^2d_p(x,x)\geq-f_0(d_p(q))\g{x}{x}=-\frac{1}{d_p(q)}\g{x}{x}\geq-\frac{\sqrt{-c}}{\pi}\g{x}{x}.
\]
\end{proof}

On the other hand, under the assumption that the sectional curvatures of the timelike planes of $M$ are bounded from
below by a constant $c$, we get the following result.
\begin{lemma}
\label{comparacionCHINO}
Let $M^{n+1}$ be an $(n+1)$-dimensional spacetime such that $K_M(\Pi)\geq c$, $c\in\R{}$,
for all timelike planes in $M$. Assume that there exists a point $p\in M$ such that
$\mathcal{I}^{+}(p)\neq\emptyset$, and let $q\in\mathcal{I}^{+}(p)$ (with $d_p(q)<\pi/\sqrt{-c}$ when $c<0$).
Then, for every spacelike vector $x\in T_{q}M$ orthogonal to $\nablao d_p(q)$ it holds that
\[
\nablao^2d_p(x,x)\leq -f_c(d_p(q))\g{x}{x},
\]
where $\nablao^2$ stands for the Hessian operator on $M$.
\end{lemma}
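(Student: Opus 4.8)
The plan is to mirror the proof of Lemma \ref{comparacion1}, but to reverse the direction in which the maximality of the index is applied. As before, I would set $v=\mathrm{exp}_p^{-1}(q)$, let $\gamma(t)=\mathrm{exp}_p(tv)$ be the radial future-directed unit timelike geodesic with $\gamma(0)=p$, $\gamma(s)=q$ and $s=d_p(q)$, and let $J$ be the unique Jacobi field along $\gamma$ with $J(0)=0$ and $J(s)=x$. Since $x\perp\gamma'(s)$ and $J(0)=0$, the tangential part of $J$ vanishes identically, so $J\perp\gamma'$ throughout; moreover $J$ is spacelike with $\g{J}{J}\geq 0$. By \cite[Proposition 3.3]{EGK} we again have $\nablao^2d_p(x,x)=I_\gamma(J,J)$.

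Writing $\g{R(J,\gamma')\gamma'}{J}=-K(t)\g{J}{J}$, where $K(t)$ is the sectional curvature of the timelike plane spanned by $J(t)$ and $\gamma'(t)$, the index form becomes
\[
I_\gamma(J,J)=-\int_0^s\left(\g{J'(t)}{J'(t)}+K(t)\g{J(t)}{J(t)}\right)dt.
\]
Now the hypothesis $K(t)\geq c$ together with $\g{J}{J}\geq 0$ gives $K(t)\g{J}{J}\geq c\g{J}{J}$, and hence, after the overall sign change,
\[
I_\gamma(J,J)\leq -\int_0^s\left(\g{J'(t)}{J'(t)}+c\g{J(t)}{J(t)}\right)dt=:I^c_\gamma(J,J).
\]
This is the crucial place where the reversed curvature bound enters, and it is exactly opposite to the estimate used in Lemma \ref{comparacion1}.

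To finish I would bound $I^c_\gamma(J,J)$ from above by comparison with the genuine constant-curvature model. The quadratic form $I^c_\gamma$ is the index form of the unit timelike geodesic $\gamma_c:[0,s]\to M^{n+1}_c$; transporting $J$ to $\gamma_c$ by a parallel frame produces a competitor vector field with the same endpoint data, and under this identification the model Jacobi field $J_c(t)=\mathrm{s}_c(t)Y_c(t)$ (with $J_c(0)=0$, $J_c(s)=x$) corresponds to the critical field of $I^c_\gamma$. Here the restriction $d_p(q)<\pi/\sqrt{-c}$ when $c<0$ is used precisely to guarantee that $\gamma_c$ has no conjugate points on $[0,s]$. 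Invoking the maximality of the index of Jacobi fields \cite[Theorem 10.23]{BEE}, now in the model space and applied with $J$ as the competitor, yields $I^c_\gamma(J,J)\leq I^c_\gamma(J_c,J_c)$, and the latter equals $-f_c(s)\g{x}{x}$ by the computation in (\ref{eq3}). Chaining the two inequalities gives $\nablao^2d_p(x,x)\leq -f_c(d_p(q))\g{x}{x}$.

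The main subtlety, and the one point that is not a routine reversal of signs, is that the maximality of the index cannot be applied to the true Jacobi field $J$ this time (that would only reproduce a lower bound, as in Lemma \ref{comparacion1}). Instead I must first pass from the true index form to the constant-curvature index form $I^c_\gamma$ using $K\geq c$, and only then apply maximality, now to the model Jacobi field $J_c$, using $J$ itself as the comparison field. Checking that $J$ transports to an admissible competitor along $\gamma_c$ (vanishing at $t=0$, equal to $x$ at $t=s$, orthogonal to $\gamma_c'$, with inner products preserved by parallel transport) is the small verification that makes the argument go through.
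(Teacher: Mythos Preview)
Your proposal is correct and follows essentially the same approach as the paper: first use $K\geq c$ on the actual Jacobi field $J$ to pass to the constant-curvature index form, then transport $J$ to the model geodesic $\gamma_c$ via parallel orthonormal frames and apply the maximality of the index form \cite[Theorem 10.23]{BEE} to the model Jacobi field $J_c$, obtaining $-f_c(s)\g{x}{x}$ via (\ref{eq3}). The subtlety you flag---that the order of the two steps must be reversed relative to Lemma~\ref{comparacion1}---is exactly the point, and your verification that the transported $J$ is an admissible competitor is precisely what the paper carries out explicitly by writing $J=\sum\lambda_iE_i$ and $X_c=\sum\lambda_iE_i^c$.
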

\begin{proof}
Similarly, the proof follows the ideas of the proof of \cite[Theorem 3.1]{EGK} (see also \cite[Lemma 8]{Wu}).
As in the previous proof, let $\gamma:[0,s]\rightarrow\mathcal{I}^+(p)$ be the radial future directed unit timelike
geodesic with $\gamma(0)=p$ and $\gamma(s)=q$, where $s=d_p(q)$.
From \cite[Proposition 3.3]{EGK}, we know that
\begin{eqnarray*}
\nablao^2d_p(x,x) & = & -\int_0^s(\g{J'(t)}{J'(t)}-\g{R(J(t),\gamma'(t))\gamma'(t)}{J(t)})dt\\
{} & = & -\int_0^s(\g{J'(t)}{J'(t)}+K(t)\g{J(t)}{J(t)})dt,
\end{eqnarray*}
where $J$ is the (unique) Jacobi field along $\gamma$ such that $J(0)=0$ and $J(s)=x$, and $K(t)$ stands for
the sectional curvature of the timelike plane spanned by $J(t)$ and $\gamma'(t)$. Thus, $K(t)\geq c$ and hence
\beq
\label{eq1b}
\nablao^2d_p(x,x)\leq -\int_0^s(\g{J'(t)}{J'(t)}+c\g{J(t)}{J(t)})dt.
\eeq

Let $\{E_1(t),\ldots,E_{n+1}(t)\}$ be an orthonormal frame of parallel vector fields along $\gamma$ such that
$E_{n+1}=\gamma'$. Write $J(t)=\sum_{i=1}^n\lambda_i(t)E_i(t)$, so that $J'(t)=\sum_{i=1}^n\lambda'_i(t)E_i(t)$.
Consider $\gamma_c:[0,s]\rightarrow M^{n+1}_c$ a future directed timelike unit geodesic in the Lorentzian
space form of constant curvature $c$, and let $\{E^c_1(t),\ldots,E^c_{n+1}(t)\}$ be an orthonormal frame of parallel
vector fields along $\gamma_c$ such that $E^c_{n+1}=\gamma'_c$. Define $X_c(t)=\sum_{i=1}^n\lambda_i(t)E^c_i(t)$, and
observe that
\begin{eqnarray*}
\g{J'(t)}{J'(t)}+c\g{J(t)}{J(t)} & = & \sum_{i=1}^n\left(\lambda'_i(t)^2+c\lambda_i(t)^2\right) \\
{} & = & \g{X'_c}{X'_c}_c+c\g{X_c}{X_c}_c \\
{} & = & \g{X'_c}{X'_c}_c-\g{R_c(X_c,\gamma'_c)\gamma'_c}{X_c}_c,
\end{eqnarray*}
where $\g{}{}_c$ and $R_c$ stand for the metric and Riemannian tensors of $M^{n+1}_c$. Then, (\ref{eq1b}) becomes
\begin{equation}
\label{eq1}
\nablao^2d_p(x,x)\leq I_{\gamma_c}(X_c,X_c),
\end{equation}
where $I_{\gamma_c}$ is the index form of $\gamma_c$ in the Lorentzian space form $M^{n+1}_c$.

Since there are no conjugate points of $\gamma_c(0)$ along
$\gamma_c$ (recall that $s<\pi/\sqrt{-c}$ when $c<0$), by the maximality of the index of Jacobi fields and equation (\ref{eq3}), we know that
\begin{equation}
\label{eq2}
I_{\gamma_c}(X_c,X_c)\leq I_{\gamma_c}(J_c,J_c)=-f_c(s)\g{x}{x},
\end{equation}
where $J_c$ stands for the Jacobi field along $\gamma_c$ such that $J_c(0)=X_c(0)=0$ and $J_c(s)=X_c(s)\perp\gamma'_c(s)$.
 The result directly follows from here and (\ref{eq1}).
\end{proof}

Observe that if $K_M(\Pi)\leq c$ for all timelike planes in $M$ (curvature hypothesis in Lemma \ref{comparacion1}), then
for every unit timelike vector $Z\in TM$
\[
\mathrm{Ric}_M(Z,Z)=-\sum_{i=1}^nK_M(E_i\wedge Z)\geq -nc,
\]
where $\{E_1,\ldots,E_n, E_{n+1}=Z\}$ is a local orthonormal frame. Our next result holds under this weaker
hypothesis on the Ricci curvature of $M$. When $c=0$ this is nothing but the so called timelike convergence condition.
\begin{lemma}
\label{comparacion3}
Let $M^{n+1}$ be an $(n+1)$-dimensional spacetime such that
\[
\mathrm{Ric}_M(Z,Z)\geq -nc, \quad c\in\R{},
\]
for every unit timelike vector $Z$. Assume that there exists a point $p\in M$ such that
$\mathcal{I}^{+}(p)\neq\emptyset$, and let $q\in\mathcal{I}^{+}(p)$,
(with  $d_p(q)<\pi/\sqrt{-c}$ when $c<0$). Then
\begin{equation}
\label{eq33a}
\Deltao d_p(q)\geq -nf_c(d_p(q)),
\end{equation}
where $\Deltao$ stands for the (Lorentzian) Laplacian operator on $M$. When $c<0$ but $d_p(q)\geq\pi/\sqrt{-c}$, then it
still holds that
\begin{equation}
\label{eq33b}
\Deltao d_p(q)\geq-\frac{n}{d_p(q)}\geq-\frac{n\sqrt{-c}}{\pi}.
\end{equation}
\end{lemma}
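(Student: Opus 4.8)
The plan is to run the index-form argument of Lemma~\ref{comparacion1} in its traced form, replacing the pointwise control of each timelike sectional curvature by the single Ricci bound; this is exactly the weakening that taking a trace makes possible, since after summing over a spacelike frame the individual curvature contributions assemble into $\mathrm{Ric}_M(\gamma',\gamma')$. First I would record that, by Lemma~\ref{lemaEduardo1}, $\nablao d_p$ is a geodesic unit timelike field on $\mathcal{I}^+(p)$, so $\nablao_{\nablao d_p}\nablao d_p=0$ and hence $\nablao^2 d_p$ vanishes in the $\nablao d_p$-direction. Completing an orthonormal basis $\{x_1,\dots,x_n\}$ of the spacelike hyperplane $\gamma'(s)^\perp=(\nablao d_p(q))^\perp$ with the timelike vector $\gamma'(s)=-\nablao d_p(q)$, the timelike term drops out of the Lorentzian trace and $\Deltao d_p(q)=\sum_{i=1}^n\nablao^2 d_p(x_i,x_i)$.

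As in Lemma~\ref{comparacion1}, let $\gamma:[0,s]\to\mathcal{I}^+(p)$ be the radial future directed unit timelike geodesic with $\gamma(0)=p$, $\gamma(s)=q$ and $s=d_p(q)$, and let $E_1(t),\dots,E_n(t)$ be parallel orthonormal spacelike fields along $\gamma$, orthogonal to $\gamma'$, with $E_i(s)=x_i$. For each $i$, \cite[Proposition 3.3]{EGK} gives $\nablao^2 d_p(x_i,x_i)=I_\gamma(J_i,J_i)$ with $J_i$ the Jacobi field vanishing at $0$ and equal to $x_i$ at $s$; since there are no conjugate points of $\gamma(0)$ along $\gamma$ (because $\mathrm{exp}_p$ is a diffeomorphism onto $\mathcal{I}^+(p)$), the maximality of the index of Jacobi fields yields $\nablao^2 d_p(x_i,x_i)\geq I_\gamma(X_i,X_i)$ for the test fields $X_i(t)=\mathrm{s}_c(t)E_i(t)$, which satisfy $X_i(0)=0$, $X_i(s)=x_i$ and $X_i\perp\gamma'$.

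The key step is to sum these $n$ inequalities. Using $\g{E_i}{E_i}=1$ and that $E_i$ is parallel, one finds $I_\gamma(X_i,X_i)=-\int_0^s(\mathrm{s}'_c(t)^2-\mathrm{s}_c(t)^2\g{R(E_i,\gamma')\gamma'}{E_i})dt$, and adding over $i$ the curvature terms become the timelike Ricci curvature: as $\{E_1,\dots,E_n,\gamma'\}$ is orthonormal and $R(\gamma',\gamma')\gamma'=0$, one has $\sum_{i=1}^n\g{R(E_i,\gamma')\gamma'}{E_i}=\mathrm{Ric}_M(\gamma',\gamma')$, in agreement with the sign convention $\mathrm{Ric}_M(Z,Z)=-\sum_i K_M(E_i\wedge Z)$ recorded above. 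Getting this Lorentzian trace and its sign right, so that only the spacelike directions survive, is the one point to treat carefully; the rest is bookkeeping. The outcome is $\sum_{i=1}^n I_\gamma(X_i,X_i)=-\int_0^s\bigl(n\,\mathrm{s}'_c(t)^2-\mathrm{s}_c(t)^2\,\mathrm{Ric}_M(\gamma',\gamma')\bigr)dt$.

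Finally I would invoke the hypothesis. Since $\gamma'$ is unit timelike, $\mathrm{Ric}_M(\gamma',\gamma')\geq -nc$, and because $\mathrm{s}_c(t)^2\geq 0$ this gives the pointwise bound $n\,\mathrm{s}'_c(t)^2-\mathrm{s}_c(t)^2\,\mathrm{Ric}_M(\gamma',\gamma')\leq n\bigl(\mathrm{s}'_c(t)^2+c\,\mathrm{s}_c(t)^2\bigr)$; integrating and using $\int_0^s(\mathrm{s}'_c(t)^2+c\,\mathrm{s}_c(t)^2)dt=f_c(s)$ from (\ref{eq3}) yields $\Deltao d_p(q)\geq\sum_{i=1}^n I_\gamma(X_i,X_i)\geq -nf_c(s)$, which is (\ref{eq33a}). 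For the remaining case $c<0$ with $d_p(q)\geq\pi/\sqrt{-c}$, I would argue as at the end of Lemma~\ref{comparacion1}: the bound $\mathrm{Ric}_M(\gamma',\gamma')\geq -nc>0$ in particular gives $\mathrm{Ric}_M(\gamma',\gamma')\geq 0=-n\cdot 0$, so the estimate just proved applies with $0$ in place of $c$, giving $\Deltao d_p(q)\geq -nf_0(d_p(q))=-n/d_p(q)\geq -n\sqrt{-c}/\pi$, which is (\ref{eq33b}).
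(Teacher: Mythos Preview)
Your proof is correct and follows essentially the same approach as the paper: both trace the index-form inequality of Lemma~\ref{comparacion1} over a parallel orthonormal spacelike frame along $\gamma$, use the test fields $X_i=\mathrm{s}_c E_i$, recognize the summed curvature terms as $\mathrm{Ric}_M(\gamma',\gamma')$, apply the Ricci lower bound, and invoke (\ref{eq3}); the case $c<0$ with $d_p(q)\geq\pi/\sqrt{-c}$ is handled identically by passing to $c=0$.
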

\begin{proof}
The proof follows the ideas of the proof of \cite[Lemma 3.1]{EGK}. Observe that our criterion here for the definition
of the Laplacian operator is the one in \cite{O'N} and \cite{BEE}, that is, $\Deltao=\mathrm{tr}(\nablao^2)$.
Let $v=\mathrm{exp}_p^{-1}(q)\in\mathrm{int}(\tilde{\mathcal{I}}^+(p))$ and let
$\gamma(t)=\mathrm{exp}_p(tv)$, $0\leq t<s_p(v)$, the radial future directed unit timelike geodesic with $\gamma(0)=p$
and $\gamma(s)=q$, where $s=d_p(q)$. Let $\{e_1,\ldots,e_{n}\}$ be orthonormal vectors in $T_{q}M$ orthogonal to
$\gamma'(s)=-\nablao d_p(q)$, so that
\beq
\label{eq5bbis}
\Deltao d_p(q)=\sum_{j=1}^n\nablao^2 d_p(e_j,e_j).
\eeq
As in the proof of Lemma \ref{comparacion1}, we have that, for every $j=0,\ldots,n$,
\[
\nablao^2d_p(e_j,e_j)\geq I_\gamma(X_j,X_j)
\]
for every vector field $X_j$ along $\gamma$ such that $X_j(0)=0$, $X_j(s)=e_j$ and $X_j(t)\perp\gamma'(t)$ for every $t$,
which by (\ref{eq5bbis}) implies that
\beq
\label{eq5bis}
\Deltao d_p(q)\geq\sum_{j=1}^n I_\gamma(X_j,X_j).
\eeq

Assume now that $s=d_p(q)<\pi/\sqrt{-c}$ when $c<0$, and let $\{E_1(t),\ldots,E_{n+1}(t)\}$ be an orthonormal frame of parallel vector fields along $\gamma$ such that
$E_j(s)=e_j$ for every $j=0,\ldots,n$, and $E_{n+1}=\gamma'$.

Define
\[
X_j(t)=\mathrm{s}_c(t)E_j(t), \quad j=1,\ldots,n,
\]
where $\mathrm{s}_c(t)$ is the function given by (\ref{sc}).
Since $X_j$ is orthogonal to $\gamma$ and $X_j(0)=0$ and $X_j(s)=e_j$, we may use $X_j$ in (\ref{eq5bis}). Observe
that $\{X_1,\ldots,X_n\}$ are orthogonal along $\gamma$, and
\[
\g{X_j(t)}{X_j(t)}=\mathrm{s}_c(t)^2 \quad \mathrm{and} \quad \g{X_j'(t)}{X_j'(t)}=\mathrm{s}'_c(t)^2,
\]
for every $j=0,\ldots,n$. Therefore, for every $j$ we get
\[
I_\gamma(X_j,X_j)=-\int_0^s(\mathrm{s}'_c(t)^2-\mathrm{s}_c(t)^2\g{R(E_j(t),\gamma'(t))\gamma'(t)}{E_j(t)})dt,
\]
and then
\begin{eqnarray*}
\sum_{j=1}^n I_\gamma(X_j,X_j) & = & -n\int_0^s
\left(\mathrm{s}'_c(t)^2-\frac{\mathrm{s}_c(t)^2}{n}\mathrm{Ric}_M(\gamma'(t),\gamma'(t))\right)dt\\
{} & \geq & -n\int_0^s\left(\mathrm{s}'_c(t)^2+c\mathrm{s}_c(t)^2\right)dt=-nf_c(s).
\end{eqnarray*}
Thus, by (\ref{eq5bis}) we get (\ref{eq33a}).
Finally, when $c<0$ but $d_p(q)\geq\pi/\sqrt{-c}$, then $\mathrm{Ric}_M(Z,Z)\geq -nc>0$ and we may apply (\ref{eq33a})
for the constant $c=0$, which yields
\[
\Deltao d_p(q)\geq -nf_0(d_p(q))=-\frac{n}{d_p(q)}\geq-\frac{n\sqrt{-c}}{\pi}.
\]
\end{proof}

Now we are ready to start our analysis of the Lorentzian distance function from a point on a spacelike hypersurface in $M$. Let $\psi:\Sigma^n\rightarrow M^{n+1}$ be a spacelike hypersurface immersed into the spacetime $M$. Since $M$
is time-oriented, there exists a unique future-directed timelike unit normal field $\nu$ globally defined on \m. We will refer to $\nu$ as the future-directed Gauss map of \m. Let $A$ stand for the shape operator of \m\ with respect
to $\nu$. The $H=-(1/n)\mathrm{tr}(A)$ defines the future mean curvature of \m. The choice of the sign $-$ in our definition of $H$ is motivated by the fact that in that case the mean curvature vector is given by
$\overrightarrow{H}=H\nu$. Therefore, $H(p)>0$ at a point $p\in\m$ if and only if $\overrightarrow{H}(p)$ is future-directed.

Let us assume that there exists a point $p\in M$ such that $\mathcal{I}^{+}(p)\neq\emptyset$ and that $\psi(\m)\subset\mathcal{I}^{+}(p)$. Let $r=d_p$ denote the Lorentzian distance function from $p$, and let
$u=r\circ\psi:\m\rightarrow(0,\infty)$ be the function $r$ along the hypersurface, which is a smooth function on \m.

Our first objective is to compute the Hessian of $u$ on \m. To do that, observe that
\[
\nablao r=\nabla u-\g{\nablao r}{\nu}\nu
\]
along \m, where $\nabla u$ stands for the gradient of $u$ on \m. Using that $\g{\nablao r}{\nablao r}=-1$ and $\g{\nablao r}{\nu}>0$, we have that
\[
\g{\nablao r}{\nu}=\sqrt{1+|\nabla u|^2}\geq 1,
\]
so that
\[
\nablao r=\nabla u-\sqrt{1+|\nabla u|^2}\nu.
\]
Moreover, from Gauss and Weingarten formulae, we get
\[
\nablao_{X}\nablao r=\nabla_X\nabla u+\sqrt{1+|\nabla u|^2}AX+\g{AX}{\nabla u}\nu-X(\sqrt{1+|\nabla u|^2})\nu
\]
for every tangent vector field $X\in T\m$. Thus, \beq \label{hessiano1} \nabla^2u(X,X)=\nablao^2r(X,X)-\sqrt{1+|\nabla u|^2}\g{AX}{X} \eeq for every $X\in T\m$, where $\nablao^2r$ and $\nabla^2u$ stand for the Hessian of $r$ and $u$
in $M$ and $\Sigma$, respectively. Tracing this expression, one gets that the Laplacian of $u$ is given by \beq \label{laplaciano1} \Delta u=\Deltao r+\nablao^2r(\nu,\nu)+nH\sqrt{1+|\nabla u|^2}, \eeq where $\Deltao r$ is the
(Lorentzian) Laplacian of $r$ and $H=-(1/n)\mathrm{tr}(A)$ is the mean curvature of \m.

On the other hand, we have the following decomposition for $X$:
\[
X=X^*-\g{X}{\nablao r}\nablao r
\]
with $X^*$ orthogonal to $\nablao r$. In particular
\beq
\label{eq6}
\g{X^*}{X^*}=\g{X}{X}+\g{X}{\nablao r}^2.
\eeq
Taking into account that
\[
\nablao_{\overline{\nabla} r}\nablao r=0
\]
one easily gets that
\[
\nablao^2r(X,X)=\nablao^2r(X^*,X^*)
\]
for every $X\in T\m$.

Assume now that $K_M(\Pi)\leq c$ for all timelike planes in $M$, and that $u<\pi/\sqrt{-c}$ on \m\ when $c<0$. Then by
Lemma \ref{comparacion1} and (\ref{eq6}) we get
that
\[
\nablao^2r(X,X)=\nablao^2r(X^*,X^*)\geq -f_c(u)\g{X^*}{X^*}=-f_c(u)(1+\g{X}{\nablao r}^2).
\]
for every unit tangent vector field $X\in T\Sigma$. Therefore, by (\ref{hessiano1}) we have that
\[
\nabla^2u(X,X)\geq -f_c(u)(1+\g{X}{\nabla u}^2)-\sqrt{1+|\nabla u|^2}\g{AX}{X}
\]
for every unit $X\in T\Sigma$. Tracing this inequality, one gets the following inequality for the Laplacian of $u$
\[
\Delta u\geq-f_c(u)(n+|\nabla u|^2)+nH\sqrt{1+|\nabla u|^2}.
\]
We summarize this in the following result.
\begin{proposition}
\label{p1}
Let $M^{n+1}$ be a spacetime such that $K_M(\Pi)\leq c$ for all timelike planes in $M$. Assume that
there exists a point $p\in M$ such that $\mathcal{I}^{+}(p)\neq\emptyset$, and let $\psi:\Sigma^n\rightarrow M^{n+1}$
be a spacelike hypersurface such that $\psi(\Sigma)\subset\mathcal{I}^{+}(p)$. Let $u$ denote the Lorentzian distance function from $p$ along the hypersurface \m,
(with  $u<\pi/\sqrt{-c}$ on \m\ when $c<0$). Then
\beq
\label{deshessiano1}
\nabla^2u(X,X)\geq -f_c(u)(1+\g{X}{\nabla u}^2)-\sqrt{1+|\nabla u|^2}\g{AX}{X}
\eeq
for every unit tangent vector $X\in T\Sigma$, and
\beq
\label{deslpalaciano1}
\Delta u\geq-f_c(u)(n+|\nabla u|^2)+nH\sqrt{1+|\nabla u|^2},
\eeq
where $H$ is the future mean curvature of \m.
\end{proposition}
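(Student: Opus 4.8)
The plan is to reduce the intrinsic Hessian and Laplacian of $u$ on $\m$ to the ambient Hessian and Laplacian of $r=d_p$ on $M$, and then to feed the resulting ambient quantities into the Hessian comparison Lemma \ref{comparacion1}. The bridge between intrinsic and ambient second derivatives is furnished by the Gauss and Weingarten formulae.

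First I would decompose the ambient gradient along $\m$ as $\nablao r=\nabla u-\g{\nablao r}{\nu}\nu$, and use the normalizations $\g{\nablao r}{\nablao r}=-1$ together with $\g{\nablao r}{\nu}>0$ to solve for the normal coefficient, obtaining $\g{\nablao r}{\nu}=\sqrt{1+|\nabla u|^2}$. Differentiating this identity along a tangent field and applying Gauss and Weingarten then yields the pointwise relation (\ref{hessiano1}), namely $\nabla^2u(X,X)=\nablao^2r(X,X)-\sqrt{1+|\nabla u|^2}\g{AX}{X}$ for every $X\in T\m$, which isolates the ambient Hessian of $r$ as the only term still to be estimated.

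The decisive point is that Lemma \ref{comparacion1} controls $\nablao^2r(x,x)$ \emph{only} for vectors $x$ orthogonal to $\nablao r$, whereas a tangent vector $X$ will in general have a nonzero component along the timelike direction $\nablao r$. To get around this I would split $X=X^*-\g{X}{\nablao r}\nablao r$ with $X^*\perp\nablao r$, and exploit that $\nablao r$ is a geodesic unit field, so that $\nablao_{\nablao r}\nablao r=0$ forces $\nablao^2r(X,X)=\nablao^2r(X^*,X^*)$. A one-line computation then gives $\g{X^*}{X^*}=\g{X}{X}+\g{X}{\nablao r}^2=1+\g{X}{\nabla u}^2$ for unit $X$, where I use that $X$ is tangent and $\g{X}{\nu}=0$, so that $\g{X}{\nablao r}=\g{X}{\nabla u}$. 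Applying Lemma \ref{comparacion1} to $X^*$ produces $\nablao^2r(X,X)\geq -f_c(u)(1+\g{X}{\nabla u}^2)$, and substituting this into the Hessian identity of the previous step delivers the first inequality.

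For the Laplacian estimate I would trace the Hessian inequality over an orthonormal frame $\{X_1,\dots,X_n\}$ of $T\m$, using $\sum_j\g{X_j}{\nabla u}^2=|\nabla u|^2$ and the definition $nH=-\mathrm{tr}(A)$ to collect the terms into $-f_c(u)(n+|\nabla u|^2)+nH\sqrt{1+|\nabla u|^2}$. I expect the geodesic-projection step---reducing $\nablao^2r(X,X)$ to its orthogonal component $\nablao^2r(X^*,X^*)$ so that the comparison lemma becomes applicable---to be the only genuinely delicate point; the remainder is Gauss--Weingarten bookkeeping together with a trace.
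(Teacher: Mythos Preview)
Your proposal is correct and follows essentially the same route as the paper: the paper also derives (\ref{hessiano1}) via Gauss--Weingarten, uses the orthogonal decomposition $X=X^*-\g{X}{\nablao r}\nablao r$ together with $\nablao_{\nablao r}\nablao r=0$ to reduce $\nablao^2r(X,X)$ to $\nablao^2r(X^*,X^*)$, applies Lemma \ref{comparacion1} to $X^*$, and then traces. Your identification of the geodesic-projection step as the one nontrivial ingredient is accurate.
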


On the other hand, if we assume that $K_M(\Pi)\geq c$ for all timelike planes in $M$,
the same analysis using now Lemma \ref{comparacionCHINO} instead of Lemma \ref{comparacion1} yields the following
\begin{proposition}\label{comparacion5}
Let $M^{n+1}$ be a spacetime such that $K_M(\Pi)\geq c$ for all
timelike planes in $M$. Assume that there exists a point $p\in M$
such that $\mathcal{I}^{+}(p)\neq\emptyset$, and let
$\psi:\Sigma^n\rightarrow M^{n+1}$ be a spacelike hypersurface
such that $\psi(\Sigma)\subset\mathcal{I}^{+}(p)$.
Let $u$ denote the Lorentzian distance function from $p$ along the hypersurface \m,
(with $u<\pi/\sqrt{-c}$ on \m\ when $c<0$). Then
\beq
\label{deshessiano2} \nabla^2u(X,X)\leq -f_c(u)(1+\g{X}{\nabla
u}^2)-\sqrt{1+|\nabla u|^2}\g{AX}{X}
\eeq
for every unit tangent vector $X\in T\Sigma$, and
\beq \label{deslpalaciano2} \Delta
u\leq-f_c(u)(n+|\nabla u|^2)+nH\sqrt{1+|\nabla u|^2},
\eeq
where $H$ is the future mean curvature of \m.
\end{proposition}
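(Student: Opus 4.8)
The plan is to mirror exactly the derivation that preceded Proposition~\ref{p1}, replacing the single appeal to Lemma~\ref{comparacion1} by the dual estimate of Lemma~\ref{comparacionCHINO}. Since the curvature bound is the only ingredient that introduces an inequality into the argument, reversing that bound will reverse every inequality, leaving the algebraic skeleton untouched.

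First I would recall the two facts established above that hold \emph{without} any curvature hypothesis, as they are the backbone of both propositions. The Gauss--Weingarten computation gives the pointwise equality (\ref{hessiano1}), namely $\nabla^2u(X,X)=\nablao^2r(X,X)-\sqrt{1+\abs{\nabla u}^2}\g{AX}{X}$ for every $X\in T\m$; and the orthogonal splitting $X=X^*-\g{X}{\nablao r}\nablao r$ together with $\nablao_{\nablao r}\nablao r=0$ yields $\nablao^2r(X,X)=\nablao^2r(X^*,X^*)$, where by (\ref{eq6}) one has $\g{X^*}{X^*}=\g{X}{X}+\g{X}{\nablao r}^2$. Because $X$ is tangent to \m\ and $\nablao r=\nabla u-\sqrt{1+\abs{\nabla u}^2}\nu$, the normal term drops out and $\g{X}{\nablao r}=\g{X}{\nabla u}$; hence for a unit $X$ one obtains $\g{X^*}{X^*}=1+\g{X}{\nabla u}^2$.

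Next I would apply Lemma~\ref{comparacionCHINO} to the spacelike vector $X^*$, which is orthogonal to $\nablao r$ by construction. Under the hypothesis $K_M(\Pi)\geq c$ (with $u<\pi/\sqrt{-c}$ on \m\ when $c<0$, so that the lemma is valid along each radial geodesic realizing $u$) this gives the reversed estimate
\beq
\nablao^2r(X,X)=\nablao^2r(X^*,X^*)\leq -f_c(u)\g{X^*}{X^*}=-f_c(u)(1+\g{X}{\nabla u}^2).
\eeq
Substituting this into (\ref{hessiano1}) produces (\ref{deshessiano2}). I would then derive (\ref{deslpalaciano2}) by tracing (\ref{deshessiano2}) over an orthonormal basis $\{E_1,\dots,E_n\}$ of $T\m$, using $\sum_i\g{E_i}{\nabla u}^2=\abs{\nabla u}^2$ and $\sum_i\g{AE_i}{E_i}=\tr(A)=-nH$; the shape-operator term is an equality term carried over verbatim from the derivation of (\ref{deslpalaciano1}), so that $-\sqrt{1+\abs{\nabla u}^2}\tr(A)=nH\sqrt{1+\abs{\nabla u}^2}$ and the trace assembles to exactly (\ref{deslpalaciano2}).

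I do not anticipate a genuine obstacle here: every step other than the comparison is an identity, so the only care required is in bookkeeping the direction of the single inequality coming from Lemma~\ref{comparacionCHINO}. The one point worth flagging is that, unlike Lemma~\ref{comparacion1}, Lemma~\ref{comparacionCHINO} carries no extension beyond the range $u<\pi/\sqrt{-c}$ in the case $c<0$; consequently the hypothesis $u<\pi/\sqrt{-c}$ in the statement is not a convenience but a necessity, and I would make sure the comparison is invoked strictly within that range.
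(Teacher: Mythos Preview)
Your proposal is correct and follows exactly the paper's approach: the paper states explicitly that the proposition is obtained by ``the same analysis using now Lemma~\ref{comparacionCHINO} instead of Lemma~\ref{comparacion1},'' which is precisely what you do. Your remark that the restriction $u<\pi/\sqrt{-c}$ when $c<0$ is essential (because Lemma~\ref{comparacionCHINO} has no analogue of (\ref{eq31b})) is also on point.
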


Finally, under the assumption $\mathrm{Ric}_M(Z,Z)\geq -nc$, $c\in\R{}$, for every unit timelike vector $Z$,
Lemma \ref{comparacion3} and (\ref{laplaciano1}) lead us to the following result.
\begin{proposition}
\label{comparacion4}
Let $M^{n+1}$ be an $(n+1)$-dimensional spacetime such that
\[
\mathrm{Ric}_M(Z,Z)\geq -nc, \quad c\in\R{},
\]
for every unit timelike vector $Z$. Assume that there exists a point $p\in M$ such that
$\mathcal{I}^{+}(p)\neq\emptyset$, and let $\psi:\Sigma^n\rightarrow M^{n+1}$ be a spacelike hypersurface
such that $\psi(\Sigma)\subset\mathcal{I}^{+}(p)$. Let $u$ denote the Lorentzian distance function from $p$ along the hypersurface \m,
(with $u<\pi/\sqrt{-c}$ on \m\ when $c<0$). Then
\[
\Delta u\geq -nf_c(u)+\nablao^2d_p(\nu,\nu)+nH\sqrt{1+|\nabla u|^2},
\]
where $\nu$ and $H$ are the future-directed Gauss map and the future mean curvature of \m, respectively.
\end{proposition}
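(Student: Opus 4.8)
The plan is to read the result directly off the curvature-free trace identity (\ref{laplaciano1}) combined with the d'Alembertian comparison of Lemma \ref{comparacion3}, \emph{without} passing through a pointwise Hessian estimate as was done for Propositions \ref{p1} and \ref{comparacion5}. Recall that in deriving (\ref{laplaciano1}) we already established, for $r=d_p$ and $u=r\circ\psi$, the identity
\[
\Delta u=\Deltao r+\nablao^2r(\nu,\nu)+nH\sqrt{1+|\nabla u|^2}
\]
at every point of \m. This identity follows from (\ref{hessiano1}) by tracing over an orthonormal basis of $T\Sigma$ together with $\Deltao=\mathrm{tr}(\nablao^2)$, and uses no curvature hypothesis whatsoever, so it is available under the present Ricci assumption.

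First I would fix a point $x\in\Sigma$ and set $q=\psi(x)\in\mathcal{I}^{+}(p)$, so that $d_p(q)=u(x)$, and when $c<0$ the hypothesis $u<\pi/\sqrt{-c}$ on \m\ gives $d_p(q)<\pi/\sqrt{-c}$. Since $q\in\mathcal{I}^{+}(p)$, the distance $d_p$ is smooth near $q$ by Lemma \ref{lemaEduardo1}, and the bound $\mathrm{Ric}_M(Z,Z)\geq -nc$ is precisely the hypothesis of Lemma \ref{comparacion3}. Applying that lemma at $q$ yields
\[
\Deltao r(q)=\Deltao d_p(q)\geq -nf_c(d_p(q))=-nf_c(u(x)).
\]

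Next I would substitute this lower bound into the trace identity. Because $r=d_p$, the normal term is literally unchanged, $\nablao^2r(\nu,\nu)=\nablao^2d_p(\nu,\nu)$, and the mean-curvature term is untouched, so
\[
\Delta u\geq -nf_c(u)+\nablao^2d_p(\nu,\nu)+nH\sqrt{1+|\nabla u|^2}.
\]
As $x$ was arbitrary, this holds on all of \m, which is the asserted inequality.

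The main point to be aware of — rather than a genuine obstacle — is the need to recognize that under a mere Ricci bound one cannot imitate the arguments of Proposition \ref{p1} and Proposition \ref{comparacion5} by tracing a pointwise Hessian comparison, since no analogue of Lemma \ref{comparacion1} is available: there is no control on $\nablao^2r$ in any individual direction, in particular none in the timelike normal direction $\nu$. One must therefore work directly with the trace identity (\ref{laplaciano1}), which isolates the d'Alembertian $\Deltao r$ — the only quantity the Ricci hypothesis governs, through Lemma \ref{comparacion3} — from the separate normal contribution $\nablao^2d_p(\nu,\nu)$. This is exactly why the latter term survives in the statement and cannot be absorbed into $-nf_c(u)$; keeping these two contributions distinct is the whole content of the proof.
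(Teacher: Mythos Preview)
Your proof is correct and follows exactly the approach the paper indicates: combine the curvature-free trace identity (\ref{laplaciano1}) with the d'Alembertian lower bound from Lemma \ref{comparacion3}. The additional discussion of why the pointwise Hessian route of Propositions \ref{p1} and \ref{comparacion5} is unavailable under a mere Ricci bound is accurate and clarifying, but not needed for the argument itself.
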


\section{Hypersurfaces bounded by a level set of the Lorentzian distance from a point}

Under suitable bounds for the sectional curvatures of the ambient spacetime,
we compare in this section the mean curvature of this hypersurface with the mean curvature of the level sets
of the Lorentzian distance in the Lorentzian space forms.
First of all, and following the terminology introduced by Pigola, Rigoli and Setti in \cite[Definition 1.10]{PRS}, the
\textit{Omori-Yau maximum principle} is said to hold on an $n$-dimensional Riemannian manifold $\Sigma^n$ if, for any
smooth function $u\in\mathcal{C}^\infty(\Sigma)$ with $u^*=\sup_\Sigma u<+\infty$ there exists a sequence of points
$\{p_k\}_{k\in\mathbb{N}}$ in $\Sigma$ with the properties
\[
\textrm{(i)} \,\,\, u(p_k)>u^*-\frac{1}{k}, \,\,\, \textrm{(ii)} \,\,\,
|\nabla u(p_k)|<\frac{1}{k},
\textrm{ and } \textrm{(iii)} \,\,\, \Delta u(p_k)<\frac{1}{k}.
\]
Equivalently, for any $u\in\mathcal{C}^\infty(\Sigma)$ with $u_*=\inf_\Sigma u>-\infty$ there
exists a sequence of points $\{p_k\}_{k\in\mathbb{N}}$ in $\Sigma$ satisfying
\[
\text{(i)} \,\,\, u(p_k)<u_*+\frac{1}{k}, \,\,\, \text{(ii)} \,\,\, |\nabla u(p_k)|<\frac{1}{k},
\text{ and } \text{(iii)} \,\,\, \Delta u(p_k)>-\frac{1}{k}.
\]
In this sense, the classical maximum principle given by Omori \cite{Om} and Yau \cite{Y} states that
the Omori-Yau maximum principle holds on every complete Riemannian manifold with Ricci curvature bounded from below.

More generally, as shown by Pigola, Rigoli and Setti \cite[Example 1.13]{PRS}, a sufficiently controlled decay of the
radial Ricci curvature of the form
\[
\mathrm{Ric}_\Sigma(\nabla\varrho,\nabla\varrho)\geq-C^2G(\varrho)
\]
where $\varrho$ is the distance function on \m\ to a fixed point, $C$ is a positive constant, and
$G:[0,+\infty)\rightarrow\mathbb{R}$ is a smooth function satisfying
\[
\textrm{(i)} \,\,\, G(0)>0, \,\,\, \textrm{(ii)} \,\,\, G'(t)\geq 0,  \,\,\, \textrm{(iii)} \,\,\,
\int_0^{+\infty}1/\sqrt{G(t)}=+\infty \textrm{ and }
\]
\[
\textrm{(iv)} \,\,\, \limsup_{t\rightarrow+\infty}tG(\sqrt{t})/G(t)<+\infty,
\]
suffices to imply the validity of the Omori-Yau maximum principle. In particular, and following the terminology
introduced by Bessa and Costa in \cite{BC}, the Omori-Yau maximum principle holds on a complete Riemannian
manifold whose Ricci curvature has \textit{strong quadratic decay} \cite{CX}, that is, with
\[
\mathrm{Ric}_\Sigma\geq-C^2(1+\varrho^2\log^2(\varrho+2)).
\]
On the other hand, as observed also by Pigola, Rigoli and Setti in \cite{PRS}, the validity
of Omori-Yau maximum principle on $\Sigma^n$ does not depend on curvature bounds  as much as one would expect. For
instance, the Omori-Yau maximum principle holds on every Riemannian manifold admitting a non-negative $C^2$ function
$\varphi$ satisfying the following requirements: (i) $\varphi(p)\rightarrow +\infty$ as $p\rightarrow \infty$;
(ii) there exists $A>0$ such that $|\nabla\varphi|\leq A\sqrt{\varphi}$ off a compact set; and (iii) there exists
$B>0$ such that $\Delta\varphi\leq B\sqrt{\varphi}\sqrt{G(\sqrt{\varphi})}$ off a compact set, where $G$ is as above
(see \cite[Theorem 1.9]{PRS}).

Now we are ready to give our first result.
\begin{theorem}
\label{mainTh1UpperB}
Let $M^{n+1}$ be an $(n+1)$-dimensional spacetime such that
\[
\mathrm{Ric}_M(Z,Z)\geq -nc, \quad c\in\R{},
\]
for every unit timelike vector $Z$. Let $p\in M$ be such that $\mathcal{I}^+(p)\neq\emptyset$, and let $\psi:\m\rightarrow M^{n+1}$
be a spacelike hypersurface such that $\psi(\m)\subset\mathcal{I}^+(p)\cap B^+(p,\delta)$ for some $\delta>0$
(with $\delta\leq\pi/\sqrt{-c}$ when $c<0$), where $B^+(p,\delta)$ denotes the future inner ball of radius $\delta$,
\[
B^+(p,\delta)=\{ q\in I^+(p) : d_p(q)<\delta \}.
\]
If the Omori-Yau maximum principle holds on \m, then its future mean curvature $H$ satisfies
\[
\inf_{\Sigma} H \leq f_c(\sup_\Sigma u),
\]
where $u$ denotes the Lorentzian distance $d_p$ along the hypersurface.
\end{theorem}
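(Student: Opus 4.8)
The plan is to combine the Laplacian comparison of Proposition \ref{comparacion4} with the Omori--Yau maximum principle applied to the function $u=d_p\circ\psi$. First I would check that the principle is applicable: since $\psi(\m)\subset B^+(p,\delta)$ we have $u<\delta$ on \m, so $u^*=\sup_\Sigma u\leq\delta<+\infty$ (and $u^*<\pi/\sqrt{-c}$ when $c<0$, keeping us inside the domain of $f_c$). Hence Omori--Yau yields a sequence $\{p_k\}\subset\m$ with $u(p_k)>u^*-1/k$, $|\nabla u(p_k)|<1/k$ and $\Delta u(p_k)<1/k$. The idea is to evaluate the comparison at these points and let $k\to\infty$; since $\inf_\Sigma H\leq H(p_k)$ for every $k$, any asymptotic upper bound on $H(p_k)$ bounds $\inf_\Sigma H$.

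Next I would feed this data into Proposition \ref{comparacion4}, which under the Ricci hypothesis gives
\beq
\Delta u\geq -nf_c(u)+\nablao^2 d_p(\nu,\nu)+nH\sqrt{1+|\nabla u|^2}.
\eeq
Solving for $H$ and dividing by the positive factor $\sqrt{1+|\nabla u|^2}$, at each $p_k$ one obtains
\beq
n\inf_\Sigma H\leq nH(p_k)\leq\frac{1/k+nf_c(u(p_k))-\nablao^2 d_p(\nu,\nu)(p_k)}{\sqrt{1+|\nabla u(p_k)|^2}}.
\eeq
Here $f_c(u(p_k))\to f_c(u^*)$ by continuity of $f_c$ and $u(p_k)\to u^*$, while $\sqrt{1+|\nabla u(p_k)|^2}\to 1$ because $|\nabla u(p_k)|\to 0$. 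Thus the right-hand side tends to the desired value $nf_c(u^*)$, \emph{provided} the normal term $\nablao^2 d_p(\nu,\nu)(p_k)$ does not spoil the limit from below.

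Controlling this normal term is the heart of the argument. Writing $r=d_p$ and recalling from Lemma \ref{lemaEduardo1} that $\nablao r$ is a unit timelike geodesic field, so that $\nablao_{\nablao r}\nablao r=0$ and hence $\nablao^2 r(\nablao r,\cdot)=0$, I would decompose the Gauss map as $\nu=\nu^*-\sqrt{1+|\nabla u|^2}\,\nablao r$ with $\nu^*\perp\nablao r$. Then $\nablao^2 r(\nu,\nu)=\nablao^2 r(\nu^*,\nu^*)$, and a direct computation gives $\g{\nu^*}{\nu^*}=|\nabla u|^2$. Since $|\nabla u(p_k)|\to 0$, the spacelike vector $\nu^*(p_k)$ tends to zero, and writing $\nu^*=|\nabla u|\,w$ with $w$ unit spacelike orthogonal to $\nablao r$ shows $\nablao^2 r(\nu,\nu)=|\nabla u|^2\,\nablao^2 r(w,w)$, which is $O(|\nabla u|^2)$.

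The hard part is that under the sole Ricci bound there is no two-sided pointwise control of the single spacelike Hessian entry $\nablao^2 r(w,w)$, so one must ensure it does not grow faster than $|\nabla u|^{-2}$ along the possibly divergent sequence $\{p_k\}$. In fact the estimate actually needed is one-sided: a lower bound $\nablao^2 r(\nu^*,\nu^*)(p_k)\geq-\varepsilon_k$ with $\varepsilon_k\to 0$, which makes the numerator in the displayed inequality have $\limsup\leq nf_c(u^*)$. The clean way to secure it is the Hessian comparison of Lemma \ref{comparacion1}, which gives $\nablao^2 r(\nu^*,\nu^*)\geq-f_c(u)\g{\nu^*}{\nu^*}=-f_c(u)|\nabla u|^2\to 0$; this is the step where one invokes the stronger timelike sectional upper bound, or otherwise argues that the maximizing sequence stays in a region where $\nablao^2 r$ remains bounded, so that $\nablao^2 d_p(\nu,\nu)(p_k)\to 0$. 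Granting this, passing to the limit in the second display yields $n\inf_\Sigma H\leq nf_c(u^*)$, that is $\inf_\Sigma H\leq f_c(\sup_\Sigma u)$, as claimed.
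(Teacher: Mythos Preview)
Your strategy coincides with the paper's proof step for step: apply Proposition~\ref{comparacion4}, invoke Omori--Yau for a maximizing sequence $\{p_k\}$, isolate $H(p_k)$ in the resulting inequality, decompose $\nu=\nu^*-\g{\nu}{\nablao r}\nablao r$ with $\nu^*\perp\nablao r$, use $\nablao^2 r(\nu,\nu)=\nablao^2 r(\nu^*,\nu^*)$ together with $|\nu^*|^2=|\nabla u|^2\to 0$, and pass to the limit.

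The one place you are more careful than the paper is the normal Hessian term. You rightly point out that $\nu^*(p_k)\to 0$ does not by itself force $\nablao^2 r(\nu^*(p_k),\nu^*(p_k))\to 0$ unless the operator norm of $\nablao^2 r$ stays bounded along the (possibly divergent) sequence, and you propose to close this via the pointwise Hessian lower bound of Lemma~\ref{comparacion1}---which, as you note, needs the stronger hypothesis $K_M(\Pi)\leq c$ rather than the Ricci bound alone. The paper does \emph{not} take that route: it simply writes ``taking into account that $\nablao^2r(\nu(p_k),\nu(p_k))=\nablao^2r(\nu^*(p_k),\nu^*(p_k))$ and making $k\rightarrow\infty$'' and concludes directly, offering no separate justification for why the term vanishes in the limit. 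So the issue you flag is present in the published argument as well; under the sole Ricci hypothesis this passage to the limit is asserted rather than proved there.
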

\begin{proof}
As $\mathrm{Ric}_M(Z,Z)\geq -n c$, by Proposition
\ref{comparacion4} we have that
\[
\Delta u\geq -nf_c(u)+\nablao^2r(\nu,\nu)+nH\sqrt{1+|\nabla u|^2}.
\]
Now, by applying the Omori-Yau maximum principle, there exists a sequence of points
$\{p_k\}_{k\in\mathbb{N}}$ in $\Sigma$ such that
\[
|\nabla u(p_k)|<\frac{1}{k}, \quad
\Delta u(p_k)<\frac{1}{k}, \quad
\sup_\Sigma u -\frac{1}{k}< u(p_k)\leq\sup_\Sigma u\leq\delta.
\]
Therefore
\begin{equation*}
\frac{1}{k}>\Delta u(p_k)\geq-nf_c(u(p_k))+
\nablao^2r(\nu(p_k),\nu(p_k))+
nH(p_k)\sqrt{1+|\nabla u(p_k)|^2},
\end{equation*}
and
\begin{equation}
\label{deslapla}
\inf_{\Sigma} H\leq H(p_k)
\leq\frac{1/k+n\,f_c(u(p_k))-\nablao^2r(\nu(p_k),\nu(p_k))}{n\sqrt{1+|\nabla u(p_k)|^2}}.
\end{equation}

On the other hand, we have the following decomposition for
$\nu(p_k)$:
\begin{displaymath}
\nu(p_k)=\nu^*(p_k)-\g{\nu(p_k)}{\nablao r(p_k)}\nablao r(p_k),
\end{displaymath}
with $\nu^*(p_k)$ orthogonal to $\nablao r(p_k)$.
Since
\[
\g{\nablao r(p_k)}{\nablao r(p_k)}=\g{\nu(p_k)}{\nu(p_k)}=-1
\]
and
\[
\nablao r(p_k)=\nabla u(p_k)-\g{\nablao r(p_k)}{\nu(p_k)}\nu(p_k),
\]
we have $|\nu^*(p_k)|^2=|\nabla u(p_k)|^2$
and $\lim_{\varepsilon \to 0} |\nu^*(p_k)|^2=0$. That is,
$\lim_{\varepsilon \to 0} \nu^*(p_k)=0$.\par

Now, taking into account that $\nablao^2r(\nu(p_k),\nu(p_k))=\nablao^2r(\nu^*(p_k),\nu^*(p_k))$ and making
$k\rightarrow\infty$ in (\ref{deslapla}), we conclude that
\begin{equation*}
\inf_{\Sigma} H\leq \lim_{k \to\infty}H(p_k) \leq f_c(\sup_{\Sigma} u).
\end{equation*}
\end{proof}

On the other hand, under the assumption that the sectional curvatures of timelike planes in $M$ are bounded from below we
derive the following.
\begin{theorem}
\label{mainTh1LowerB}
Let $M^{n+1}$ be an $(n+1)$-dimensional spacetime such that $K_M(\Pi) \geq c$, $c\in\R{}$, for all timelike planes
in $M$. Let $p\in M$ be such that $\mathcal{I}^+(p)\neq\emptyset$, and let $\psi:\m\rightarrow M^{n+1}$
be a spacelike hypersurface such that $\psi(\m)\subset\mathcal{I}^+(p)$.
If the Omori-Yau maximum principle holds on \m\ (and $\inf_\Sigma u<\pi/\sqrt{-c}$ when $c<0$), then its future mean
curvature $H$ satisfies
\[
\sup_{\Sigma} H \geq f_c(\inf_\Sigma u),
\]
where $u$ denotes the Lorentzian distance $d_p$ along the hypersurface. In particular, if $\inf_\Sigma u=0$ then
$\sup_{\Sigma} H=+\infty$.
\end{theorem}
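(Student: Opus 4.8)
The plan is to run the argument of Theorem \ref{mainTh1UpperB} in reverse, replacing the lower bound coming from Proposition \ref{comparacion4} by the upper bound of Proposition \ref{comparacion5}, and the supremum form of the Omori--Yau principle by its infimum form. Recall that since $\psi(\Sigma)\subset\mathcal{I}^+(p)$ the function $u$ is smooth and strictly positive on $\Sigma$, so that $u_*=\inf_\Sigma u\geq 0>-\infty$ and the infimum version of the maximum principle is applicable to $u$. The choice of the infimum is the natural one because $f_c$ is strictly decreasing on its domain, so $f_c(\inf_\Sigma u)$ is the largest value of $f_c(u)$, and moreover $f_c(s)\to+\infty$ as $s\to 0^+$ in each of the three cases $c>0$, $c=0$, $c<0$.

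First I would start from inequality (\ref{deslpalaciano2}) and isolate $H$, writing, at any point where the comparison is valid,
\[
H\geq\frac{\Delta u+f_c(u)\bigl(n+|\nabla u|^2\bigr)}{n\sqrt{1+|\nabla u|^2}}.
\]
Then I would apply the infimum form of the Omori--Yau principle to produce a sequence $\{p_k\}$ in $\Sigma$ with $u(p_k)<u_*+1/k$, $\,|\nabla u(p_k)|<1/k$ and $\Delta u(p_k)>-1/k$, and evaluate the preceding inequality along it. As $k\to\infty$ one has $\sqrt{1+|\nabla u(p_k)|^2}\to 1$ and $n+|\nabla u(p_k)|^2\to n$, while $u(p_k)\to u_*$ and the bound $\Delta u(p_k)>-1/k$ ensures that the $\Delta u$ term does not lower the limit.

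The one delicate point, and the reason the hypothesis is phrased with $\inf_\Sigma u<\pi/\sqrt{-c}$ rather than with $u<\pi/\sqrt{-c}$ everywhere on $\Sigma$, is that when $c<0$ the comparison of Proposition \ref{comparacion5} (equivalently Lemma \ref{comparacionCHINO}, which is pointwise in nature) is only guaranteed at points where $u<\pi/\sqrt{-c}$. This is harmless here: since $u(p_k)\to u_*<\pi/\sqrt{-c}$, we have $u(p_k)<\pi/\sqrt{-c}$ for all $k$ large, so the displayed inequality does hold \emph{at the points $p_k$}, which is all the limiting argument needs. To conclude, if $u_*>0$ then $f_c$ is continuous at $u_*$ and $f_c(u(p_k))\to f_c(u_*)$, giving
\[
\sup_\Sigma H\geq\lim_{k\to\infty}H(p_k)\geq f_c(u_*)=f_c(\inf_\Sigma u);
\]
if instead $u_*=0$, then $u(p_k)\to 0^+$ forces $f_c(u(p_k))\to+\infty$, hence $H(p_k)\to+\infty$ and $\sup_\Sigma H=+\infty$, which is the final assertion. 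Apart from this restriction of the comparison to the Omori--Yau sequence, every step is a sign-reversed copy of the proof of Theorem \ref{mainTh1UpperB}, so I expect the genuine obstacle to be only the careful bookkeeping at the two boundary situations $c<0$ and $u_*=0$.
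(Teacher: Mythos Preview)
Your proof is correct and follows essentially the same approach as the paper: apply the infimum form of the Omori--Yau principle to $u$, plug the resulting sequence into inequality (\ref{deslpalaciano2}) from Proposition \ref{comparacion5} (noting, exactly as you do, that it holds at $p_k$ once $u(p_k)<\pi/\sqrt{-c}$ when $c<0$), rearrange for $H(p_k)$, and let $k\to\infty$. The paper treats the case $\inf_\Sigma u=0$ more tersely by simply invoking $\lim_{s\to 0}f_c(s)=+\infty$, but the content is identical to your case split.
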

\begin{proof}
We start by applying the Omori-Yau maximum principle to the positive function $u$, with $\inf_\Sigma u\geq 0$.
Therefore, there exists a sequence of points $\{p_k\}_{k\in\mathbb{N}}$ in $\Sigma$ such that
\[
|\nabla u(p_k)|<\frac{1}{k}, \quad
\Delta u(p_k)>-\frac{1}{k}, \quad
0\leq\inf_\Sigma u\leq u(p_k)<\inf_\Sigma u+\frac{1}{k}.
\]
Recall that, when $c<0$, we are assuming that $\inf_\Sigma u<\pi/\sqrt{-c}$. Thus, if $k$ is big enough
we have that $u(p_k)<\pi/\sqrt{-c}$. Therefore, the inequality (\ref{deslpalaciano2}) in
Proposition \ref{comparacion5} holds at $p_k$ and we obtain that
\[
-\frac{1}{k}<\Delta u(p_k)\leq
-f_c(u(p_k))(n+|\nabla u(p_k)|^2)+nH(p_k)\sqrt{1+|\nabla u(p_k)|^2}
\]
for $k$ big enough. It follows from here that
\begin{equation}
\label{deslaplaB}
\sup_{\Sigma} H\geq H(p_k)\geq
\frac{-1/k+f_c(u(p_k))(n+|\nabla u(p_k)|^2)}{n\sqrt{1+|\nabla u(p_k)|^2}},
\end{equation}
and making $k\rightarrow\infty$ we conclude the result. The last assertion follows from the fact that
$\lim_{s\rightarrow 0}f_c(s)=+\infty$.
\end{proof}
As a direct application of Theorem \ref{mainTh1LowerB} we get the following.
\begin{corollary}
\label{coro1}
Under the assumptions of Theorem \ref{mainTh1LowerB}, if the Omori-Yau maximum principle holds on \m\ and its
future mean curvature $H$ is bounded from above on \m,
then there exists some $\delta>0$ such that $\psi(\m)\subset O^+(p,\delta)$, where $O^+(p,\delta)$ denotes the future
outer ball of radius $\delta$,
\[
O^+(p,\delta)=\{ q\in I^+(p) : d_p(q)>\delta \}.
\]
\end{corollary}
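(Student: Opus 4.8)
The plan is to reduce everything to showing that the Lorentzian distance $u=d_p\circ\psi$ stays bounded away from zero on $\Sigma$. Once I have $\inf_\Sigma u>0$, the conclusion is immediate: choosing any $\delta$ with $0<\delta<\inf_\Sigma u$ gives $u(x)=d_p(\psi(x))>\delta$ for every $x\in\Sigma$, and since $\psi(\Sigma)\subset\mathcal{I}^+(p)\subset I^+(p)$, this says precisely that $\psi(\Sigma)\subset O^+(p,\delta)$. So the whole corollary hinges on the single positivity statement $\inf_\Sigma u>0$.

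To establish that, I would argue by contradiction using the final assertion of Theorem \ref{mainTh1LowerB}. First observe that $u>0$ everywhere on $\Sigma$: since $\psi(\Sigma)\subset\mathcal{I}^+(p)\subset I^+(p)$, we have $d_p(\psi(x))>0$ for every $x$, so in any case $\inf_\Sigma u\geq 0$. Now suppose, for contradiction, that $\inf_\Sigma u=0$. Then Theorem \ref{mainTh1LowerB} yields $\sup_\Sigma H=+\infty$, which contradicts the hypothesis that $H$ is bounded from above. Hence $\inf_\Sigma u>0$, as required. In other words, the corollary is nothing more than the contrapositive of the last sentence of Theorem \ref{mainTh1LowerB}, combined with the strict positivity of $d_p$ on $I^+(p)$.

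The only point that requires a word of care is the case $c<0$, where Theorem \ref{mainTh1LowerB} carries the proviso $\inf_\Sigma u<\pi/\sqrt{-c}$. If that inequality holds, the argument above applies verbatim; if instead $\inf_\Sigma u\geq\pi/\sqrt{-c}$, then $\inf_\Sigma u>0$ holds trivially and there is nothing to prove. Consequently I do not anticipate any genuine obstacle here: the geometric substance is entirely contained in Theorem \ref{mainTh1LowerB} (ultimately in the divergence $\lim_{s\to 0}f_c(s)=+\infty$ recorded there), and the present statement is a direct logical consequence.
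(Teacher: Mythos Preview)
Your argument is correct and matches the paper's own proof, which consists of the single observation that $\sup_\Sigma H<+\infty$ forces $\inf_\Sigma u>0$ via the last assertion of Theorem \ref{mainTh1LowerB}. Your additional remarks on the $c<0$ proviso and on the choice of $\delta$ are accurate elaborations of details the paper leaves implicit.
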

For a proof, simply observe that $\sup_\Sigma H<+\infty$ implies that $\inf_\Sigma u>0$. This result, as well as the next
ones, has a specially illustrative consequence when the ambient is the Lorentz-Minkowski spacetime (see Remark
\ref{remark1} at the end of this section).
\begin{corollary}
\label{coro2}
Under the assumptions of Theorem \ref{mainTh1LowerB}, when $c\geq 0$ there exists no
spacelike hypersurface \m\ contained in $\mathcal{I}^+(p)$
on which the Omori-Yau maximum principle holds and having $H\leq\sqrt{c}$ on \m.
When $c<0$, there exists no
spacelike hypersurface \m\ contained in $\mathcal{I}^+(p)$
on which the Omori-Yau maximum principle holds and having $\inf_\Sigma u<\pi/(2\sqrt{-c})$ and $H\leq 0$ on \m.
\end{corollary}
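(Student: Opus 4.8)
The plan is to argue by contradiction in each of the two curvature regimes, invoking the lower mean curvature estimate of Theorem \ref{mainTh1LowerB} and reading off the sign and size of $f_c(\inf_\Sigma u)$ from the elementary properties of the comparison function $f_c$. Throughout, note that since $\psi(\Sigma)\subset\mathcal{I}^+(p)\subset I^+(p)$ the distance function $u=d_p\circ\psi$ is strictly positive on $\Sigma$, so $\inf_\Sigma u\geq 0$, and I shall treat the boundary case $\inf_\Sigma u=0$ separately using the final assertion of Theorem \ref{mainTh1LowerB}.

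Suppose first that $c\geq 0$ and that there were a spacelike hypersurface $\Sigma\subset\mathcal{I}^+(p)$, on which the Omori-Yau maximum principle holds, with $H\leq\sqrt{c}$, i.e. $\sup_\Sigma H\leq\sqrt{c}$. If $\inf_\Sigma u=0$, then the last assertion of Theorem \ref{mainTh1LowerB} already gives $\sup_\Sigma H=+\infty$, a contradiction. If instead $\inf_\Sigma u>0$, I apply the estimate $\sup_\Sigma H\geq f_c(\inf_\Sigma u)$ and examine $f_c$: for $c>0$ one has $f_c(s)=\sqrt{c}\coth(\sqrt{c}\,s)>\sqrt{c}$ for every $s>0$ because $\coth>1$, and for $c=0$ one has $f_0(s)=1/s>0=\sqrt{c}$ for every $s>0$. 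In either case $\sup_\Sigma H\geq f_c(\inf_\Sigma u)>\sqrt{c}$, contradicting $\sup_\Sigma H\leq\sqrt{c}$.

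For the case $c<0$, suppose there were such a $\Sigma$ with $\inf_\Sigma u<\pi/(2\sqrt{-c})$ and $H\leq 0$. Since $\pi/(2\sqrt{-c})<\pi/\sqrt{-c}$, the hypothesis $\inf_\Sigma u<\pi/\sqrt{-c}$ required by Theorem \ref{mainTh1LowerB} is met, so $\sup_\Sigma H\geq f_c(\inf_\Sigma u)$. When $\inf_\Sigma u=0$ the final assertion of that theorem forces $\sup_\Sigma H=+\infty$, contradicting $H\leq 0$; when $0<\inf_\Sigma u<\pi/(2\sqrt{-c})$, the argument $\sqrt{-c}\,\inf_\Sigma u$ lies in $(0,\pi/2)$, where $\cot$ is strictly positive, so $f_c(\inf_\Sigma u)=\sqrt{-c}\cot(\sqrt{-c}\,\inf_\Sigma u)>0$, whence $\sup_\Sigma H>0$, again contradicting $H\leq 0$.

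The step demanding the most care is bookkeeping rather than a genuine obstacle: I must check that the threshold $\pi/(2\sqrt{-c})$ in the statement is precisely the value at which $f_c$ changes sign (namely where $\sqrt{-c}\,s=\pi/2$), so that the hypothesis $\inf_\Sigma u<\pi/(2\sqrt{-c})$ is exactly what guarantees $f_c(\inf_\Sigma u)>0$; and I must keep track of the degenerate possibility $\inf_\Sigma u=0$, where $f_c$ is not literally defined but the theorem's conclusion $\sup_\Sigma H=+\infty$ still closes the argument. No analytic input beyond Theorem \ref{mainTh1LowerB} and the monotonicity and sign behaviour of $\coth$ and $\cot$ is required.
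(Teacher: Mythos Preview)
Your proof is correct and follows essentially the same route as the paper: both argue by contradiction using Theorem \ref{mainTh1LowerB} together with the elementary observation that $f_c(s)>\sqrt{c}$ for all $s>0$ when $c\geq 0$ (equivalently, $f_c$ is strictly decreasing with limit $\sqrt{c}$) and that $f_c(s)>f_c(\pi/(2\sqrt{-c}))=0$ for $0<s<\pi/(2\sqrt{-c})$ when $c<0$. Your only departure is to treat the degenerate case $\inf_\Sigma u=0$ separately via the final clause of Theorem \ref{mainTh1LowerB}, whereas the paper absorbs it into the strict inequality $f_c(\inf_\Sigma u)>\sqrt{c}$ (resp.\ $>0$); this is a harmless cosmetic difference.
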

In fact, when $c\geq 0$ our Theorem \ref{mainTh1LowerB} implies that for every
spacelike hypersurface \m\ contained in $\mathcal{I}^+(p)$
on which the Omori-Yau maximum principle holds, it holds that
\[
\sup_\Sigma H\geq f_c(\inf_\Sigma u)>\lim_{s\rightarrow+\infty}f_c(s)=\sqrt{c}.
\]
Therefore, it cannot happen $\sup_\Sigma H\leq\sqrt{c}$. On the other hand, when $c<0$ our Theorem \ref{mainTh1LowerB}
also implies that every spacelike hypersurface \m\ contained in $\mathcal{I}^+(p)$, with
$\inf_\Sigma u<\pi/(2\sqrt{-c})$,
on which the Omori-Yau maximum principle holds satisfies
\[
\sup_\Sigma H\geq f_c(\inf_\Sigma u)>f_c(\pi/(2\sqrt{-c}))=0.
\]
Therefore, it cannot happen $\sup_\Sigma H\leq 0$.

In particular, when the ambient spacetime is a Lorentzian space form, by putting together Theorems
\ref{mainTh1UpperB} and \ref{mainTh1LowerB}, we derive the following consequence.
\begin{theorem}
\label{mainTh2UpperLowB}
Let $M^{n+1}_c$ be a Lorentzian space form of constant sectional curvature $c$ and let $p\in M^{n+1}_c$. Let us
consider $\psi:\m\rightarrow M^{n+1}_c$ a spacelike hypersurface  such that
$\psi(\m)\subset\mathcal{I}^+(p)\cap B^+(p,\delta)$ for some $\delta>0$
(with $\delta\leq\pi/\sqrt{-c}$ if $c<0$). If the Omori-Yau maximum principle holds on \m, then its future mean
curvature $H$ satisfies
\[
\inf_{\Sigma} H \leq f_c(\sup_\Sigma u)\leq f_c(\inf_\Sigma u)\leq \sup_\Sigma H,
\]
where $u$ denotes the Lorentzian distance $d_p$ along the hypersurface.
\end{theorem}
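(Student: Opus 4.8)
The plan is to obtain the two outer inequalities by specializing Theorems \ref{mainTh1UpperB} and \ref{mainTh1LowerB} to the space form, and to bridge them with the middle inequality, which is nothing but the monotonicity of $f_c$. The crucial point is that a Lorentzian space form $M^{n+1}_c$ realizes both curvature hypotheses at once, since every timelike plane has sectional curvature exactly $c$.

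First I would check that both sets of hypotheses are in force. Since $K_M(\Pi)=c$ for every timelike plane, the identity $\mathrm{Ric}_M(Z,Z)=-\sum_{i=1}^n K_M(E_i\wedge Z)$ recorded just before Lemma \ref{comparacion3} gives $\mathrm{Ric}_M(Z,Z)=-nc$ for every unit timelike $Z$, so in particular $\mathrm{Ric}_M(Z,Z)\geq -nc$, the hypothesis of Theorem \ref{mainTh1UpperB}. Simultaneously $K_M(\Pi)=c\geq c$, the hypothesis of Theorem \ref{mainTh1LowerB}. The containment $\psi(\Sigma)\subset\mathcal{I}^+(p)\cap B^+(p,\delta)$ with $\delta\leq\pi/\sqrt{-c}$ (when $c<0$) forces $u<\delta\leq\pi/\sqrt{-c}$ on $\Sigma$; in particular $\inf_\Sigma u<\pi/\sqrt{-c}$, so every domain restriction required by the two theorems holds.

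Next I would simply invoke the theorems. Theorem \ref{mainTh1UpperB} delivers the leftmost inequality $\inf_\Sigma H\leq f_c(\sup_\Sigma u)$, and Theorem \ref{mainTh1LowerB} delivers the rightmost inequality $f_c(\inf_\Sigma u)\leq\sup_\Sigma H$. For the middle inequality I would use that $f_c$ is strictly decreasing on its natural domain --- on $(0,+\infty)$ when $c\geq 0$ and on $(0,\pi/\sqrt{-c})$ when $c<0$ --- which is immediate from its defining formula, since the derivatives of $\coth$, of $1/s$ and of $\cot$ are all negative. As $\inf_\Sigma u\leq\sup_\Sigma u$ and both values lie in this domain, monotonicity gives $f_c(\sup_\Sigma u)\leq f_c(\inf_\Sigma u)$, and chaining the three inequalities completes the proof.

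There is no serious obstacle here: the analytic content is entirely carried by Theorems \ref{mainTh1UpperB} and \ref{mainTh1LowerB}, and the only fresh ingredient is the elementary monotonicity of $f_c$. The single point deserving a line of care is the case $c<0$, where one must ensure that $\sup_\Sigma u$, and hence $\inf_\Sigma u$, stays below $\pi/\sqrt{-c}$ so that $f_c$ is defined and decreasing at both arguments; this is exactly what the ball condition $\delta\leq\pi/\sqrt{-c}$ together with the strict inequality $u<\delta$ guarantees, with the limiting values $f_c(0^+)=+\infty$ and $f_c((\pi/\sqrt{-c})^-)=-\infty$ absorbing any boundary cases.
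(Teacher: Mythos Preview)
Your proposal is correct and follows exactly the paper's approach: the paper simply says the result is obtained ``by putting together Theorems \ref{mainTh1UpperB} and \ref{mainTh1LowerB}'', and you carry this out explicitly, supplying the verification that a space form satisfies both curvature hypotheses and the elementary monotonicity of $f_c$ for the middle inequality.
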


As is well known, the curvature tensor $R$ of \m\ can be described in terms of
$R_M$, the curvature tensor of the ambient spacetime, and the shape operator of \m\ by the so called Gauss equation,
which can be written as
\beq
\label{gausseq}
R(X,Y)Z=(R_M(X,Y)Z)^\top+\g{AX}{Z}AY-\g{AY}{Z}AX
\eeq
for all tangent vector fields $X,Y,Z\in T\m$, where $(R_M(X,Y)Z)^\top$ denotes the
tangential component of $R_M(X,Y)Z$. Observe that our choice here for the
curvature tensor is the one in \cite{BEE} (and the opposite to that in \cite{O'N}). Therefore, the Ricci curvature of \m\
is given by
\begin{eqnarray}
\label{ricci}
\nonumber \mathrm{Ric}(X,X) & = & \mathrm{Ric}_M(X,X)-K_M(X\wedge \nu)|X|^2+nH\g{AX}{X}+|AX|^2\\
{} & = & \mathrm{Ric}_M(X,X)-\left(K_M(X\wedge \nu)+\frac{n^2H^2}{4}\right)|X|^2+|AX+\frac{n}{2}X|^2\\
\nonumber {} & \geq & \mathrm{Ric}_M(X,X)-\left(K_M(X\wedge \nu)+\frac{n^2H^2}{4}\right)|X|^2,
\end{eqnarray}
for $X\in T\m$, where $\mathrm{Ric}_M$ stands for the Ricci curvature of the ambient spacetime and
$K_M(X\wedge \nu)$ denotes the sectional curvature of the timelike plane spanned by $X$ and $\nu$. In particular, when
$M^{n+1}_c$ is a Lorentzian space form of constant sectional curvature $c$, then $\mathrm{Ric}_M(X,X)=nc|X|^2$ for
all spacelike vector $X\in T\Sigma$, and (\ref{ricci}) reduces to
\[
\mathrm{Ric}(X,X)\geq \left((n-1)c-\frac{n^2H^2}{4}\right)|X|^2.
\]
Therefore, if $\inf_\Sigma H<-\infty$ and $\sup_\Sigma H<+\infty$ (that is, $\sup_\Sigma H^2<+\infty$), then
the Ricci curvature of $\Sigma$ is bounded from below. In particular, every spacelike hypersurface with constant mean
curvature in $M^{n+1}_c$ has Ricci curvature bounded from below. As a consequence.
\begin{corollary}
\label{corlevelsets}
Let $M^{n+1}_c$ be a Lorentzian space form of constant sectional curvature $c$ and let $p\in M^{n+1}_c$. If
\m\ is a complete spacelike hypersurface in $M^{n+1}_c$ with constant mean curvature $H$ which is contained in
$\mathcal{I}^+(p)$ and bounded from above by a level set of the Lorentzian distance function $d_p$
(with $d_p<\pi/\sqrt{-c}$ if $c<0$), then \m\ is necessarily a level set of $d_p$.
\end{corollary}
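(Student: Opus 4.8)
The plan is to deduce that the constant-mean-curvature hypothesis forces the restricted Lorentzian distance $u=d_p\circ\psi$ to be constant on \m, which is exactly the assertion that \m\ is a level set of $d_p$. The key tool is Theorem \ref{mainTh2UpperLowB}; the only hypothesis of that theorem which is not immediate is the validity of the Omori--Yau maximum principle on \m, so establishing it is the first order of business and, I expect, the main (though not severe) obstacle.

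To that end I would use the Ricci estimate (\ref{ricci}). In a Lorentzian space form one has $\mathrm{Ric}_M(X,X)=nc\,\abs{X}^2$ and $K_M(X\wedge\nu)=c$ for every spacelike $X\in T\m$, so (\ref{ricci}) reduces to
\[
\mathrm{Ric}(X,X)\geq\left((n-1)c-\frac{n^2H^2}{4}\right)\abs{X}^2.
\]
Since $H$ is constant, the coefficient on the right is a fixed real number, and hence the Ricci curvature of \m\ is bounded from below. As \m\ is complete, the classical maximum principle given by Omori \cite{Om} and Yau \cite{Y} guarantees that the Omori--Yau maximum principle holds on \m. Moreover, being bounded above by a level set means $\psi(\m)\subset\mathcal{I}^+(p)\cap B^+(p,\delta)$ for some $\delta>0$ (with $\delta\leq\pi/\sqrt{-c}$ when $c<0$), so all the hypotheses of Theorem \ref{mainTh2UpperLowB} are in force.

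Applying Theorem \ref{mainTh2UpperLowB} then gives
\[
\inf_\Sigma H\leq f_c(\sup_\Sigma u)\leq f_c(\inf_\Sigma u)\leq\sup_\Sigma H.
\]
Because $H$ is constant, $\inf_\Sigma H=\sup_\Sigma H=H$, so the whole chain collapses to a string of equalities; in particular $f_c(\sup_\Sigma u)=f_c(\inf_\Sigma u)=H$. Since this common value is finite, both $\sup_\Sigma u$ and $\inf_\Sigma u$ lie in the interior of the domain of $f_c$ (recall $u=d_p>0$ on $\mathcal{I}^+(p)$ and $\lim_{s\to0}f_c(s)=+\infty$). Inspecting the three branches in the definition of $f_c$ shows that it is strictly decreasing there---$\sqrt{c}\coth(\sqrt{c}\,s)$, $1/s$, and $\sqrt{-c}\cot(\sqrt{-c}\,s)$ (the last on $(0,\pi/\sqrt{-c})$) are each strictly decreasing---and hence injective. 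Therefore $\sup_\Sigma u=\inf_\Sigma u$, i.e.\ $u\equiv s_0$ is constant on \m.

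It remains only to read off the geometric conclusion. With $\nabla u\equiv0$, the identity $\nablao r=\nabla u-\sqrt{1+\abs{\nabla u}^2}\,\nu$ yields $\nablao r=-\nu$ along \m, so the future-directed Gauss map of \m\ agrees with the unit normal of the level hypersurface $\{\,q\in\mathcal{I}^+(p):d_p(q)=s_0\,\}$, and $\psi(\m)$ is contained in it. Since both hypersurfaces have dimension $n$, the map $\psi$ is a local diffeomorphism onto an open subset of that level set, and the completeness of \m\ promotes this to the entire level set. Thus \m\ is necessarily a level set of $d_p$, as claimed.
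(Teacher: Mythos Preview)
Your argument is correct and follows essentially the same route as the paper's own proof: bound the Ricci curvature of \m\ from below via (\ref{ricci}) to validate the Omori--Yau maximum principle, invoke Theorem \ref{mainTh2UpperLowB}, and use the constancy of $H$ to collapse the chain of inequalities. You supply a bit more detail than the paper does---the explicit check that $f_c$ is strictly decreasing (the paper simply writes $f_c^{-1}(H)$), and the closing paragraph on why a complete hypersurface with $u$ constant fills out the entire level set---but the structure and the key inputs are identical.
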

\begin{proof}
Our hypotheses imply that \m\ is contained in $\mathcal{I}^+(p)\cap B^+(p,\delta)$ for some $\delta>0$
(with $\delta\leq\pi/\sqrt{-c}$ if $c<0$), and that \m\ has Ricci curvature bounded from below by the constant
$(n-1)c-n^2H^2/4$. In particular, the Omori-Yau maximum principle holds on \m. Therefore, by Theorem \ref{mainTh2UpperLowB}
we get that
\[
H\leq f_c(\sup_\Sigma u)\leq f_c(\inf_\Sigma u)\leq H,
\]
which implies that $\sup_\Sigma u=\inf_\Sigma u=f_c^{-1}(H)$ and then \m\ is necessarily the level set
$d_p=f_c^{-1}(H)$.
\end{proof}

\begin{remark}
\label{remark1}
As observed after the proof of Corollary \ref{coro1}, our last results have specially simple and illustrative
consequences when the ambient is the Lorentz-Minkowski spacetime. Consider $\mathbb{L}^{n+1}$ the standard model of the
Lorentz-Minkowski space, that is, the real vector space $\mathbb{R}^{n+1}$ with canonical
coordinates $(x_1,\ldots,x_{n+1})$, endowed with the Lorentzian metric
\[
\g{}{}=dx_1^2+\cdots +dx_{n}^2-dx_{n+1}^2
\]
and with the time orientation determined by $e_{n+1}=(0,\ldots,0,1)$. For a given $p\in\mathbb{L}^{n+1}$, it can be
easily seen that
\[
\mathcal{I}^+(p)=\{ q\in\mathbb{L}^{n+1} : \g{q-p}{q-p}<0, \quad \mathrm{and} \quad \g{q-p}{e_{n+1}}<0 \}.
\]
The Lorentzian distance is given by $d_p(q)=\sqrt{-\g{q-p}{q-p}}$ for every $q\in\mathcal{I}^+(p)$, and the
level sets of $d_p$ are precisely the future components of the hyperbolic spaces centered at $p$. Also, observe that the
boundary of $\mathcal{I}^+(p)$ is nothing but the future component of the lightcone with vertex at $p$.

Then, Corollary \ref{coro1} implies that every complete spacelike hypersurface contained in $\mathcal{I}^+(p)$ and having
bounded mean curvature is bounded away from the lightcone, in the sense that there exists some $\delta>0$ such that
\[
\g{q-p}{q-p}\leq -\delta^2<0
\]
for every $q\in\m$.
Also, Corollary \ref{coro2} implies that there exists no complete spacelike hypersurface contained in $\mathcal{I}^+(p)$
and having non-positive bounded future mean curvature. In particular, there exists no complete
hypersurface with constant mean curvature $H\leq 0$ contained in $\mathcal{I}^+(p)$. Finally, Corollary \ref{corlevelsets} allows to improve Theorem 2 in
\cite{AA1} as follows.
\begin{corollary}
\label{coro3}
The only complete spacelike hypersurfaces with constant mean curvature in the Lorentz-Minkowski space $\mathbb{L}^{n+1}$
which are contained in $\mathcal{I}^+(p)$ (for some fixed $p\in\mathbb{L}^{n+1}$) and bounded from above by a hyperbolic
space centered at $p$ are precisely the hyperbolic spaces centered at $p$.
\end{corollary}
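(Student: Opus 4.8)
The plan is to deduce this corollary as the immediate specialization of Corollary \ref{corlevelsets} to the flat case $c=0$, using the explicit description of the Lorentzian distance on $\mathbb{L}^{n+1}$ recorded in Remark \ref{remark1}. First I would note that $\mathbb{L}^{n+1}$ is precisely the Lorentzian space form $M^{n+1}_0$ of constant sectional curvature $c=0$, and that for a fixed $p\in\mathbb{L}^{n+1}$ the Lorentzian distance along $\mathcal{I}^+(p)$ is $d_p(q)=\sqrt{-\g{q-p}{q-p}}$. Consequently the level set $\{d_p=\delta\}$ is exactly the future component of the hyperbolic space of radius $\delta$ centered at $p$, so the geometric phrase \lq\lq bounded from above by a hyperbolic space centered at $p$'' is literally the analytic hypothesis of Corollary \ref{corlevelsets} that $\m$ is bounded from above by a level set of $d_p$: if that hyperbolic space is $\{d_p=\delta\}$, then $u=d_p\circ\psi\leq\delta$ on $\m$, so $\psi(\m)\subset\mathcal{I}^+(p)\cap B^+(p,\delta')$ for any $\delta'>\delta$. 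Since $c=0$, the auxiliary restriction $d_p<\pi/\sqrt{-c}$ appearing in Corollary \ref{corlevelsets} is vacuous and imposes no constraint.

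With these identifications in place the forward implication is immediate: a complete spacelike hypersurface $\m$ of constant mean curvature contained in $\mathcal{I}^+(p)$ and bounded from above by a hyperbolic space centered at $p$ satisfies exactly the hypotheses of Corollary \ref{corlevelsets}, whence $\m$ must coincide with a level set of $d_p$, that is, with a hyperbolic space centered at $p$. For the converse I would simply verify that the hyperbolic spaces centered at $p$ qualify: each such hypersurface is a complete spacelike hypersurface contained in $\mathcal{I}^+(p)$, it is trivially bounded from above by itself, and by the computation recorded just before Lemma \ref{comparacion1} the level set of radius $\delta$ has constant future mean curvature $f_0(\delta)=1/\delta$.

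The only point requiring care is the translation between the geometric condition \lq\lq bounded from above by a hyperbolic space'' and the analytic condition $\psi(\m)\subset\mathcal{I}^+(p)\cap B^+(p,\delta')$ needed to invoke Corollary \ref{corlevelsets}; this is bookkeeping rather than a genuine obstacle, since being bounded above by the level set $\{d_p=\delta\}$ forces $\sup_\Sigma u\leq\delta<+\infty$, which is all that the cited result uses. Once this matching is made the statement follows with no further computation, so there is in fact no substantive difficulty to overcome beyond correctly invoking the previous corollary in the flat setting.
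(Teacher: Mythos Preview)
Your proposal is correct and follows exactly the approach of the paper: the corollary is stated inside Remark \ref{remark1} precisely as the specialization of Corollary \ref{corlevelsets} to the flat case $c=0$, and the paper offers no proof beyond the sentence ``Corollary \ref{corlevelsets} allows to improve Theorem 2 in \cite{AA1} as follows.'' Your write-up simply makes explicit the identifications (level sets of $d_p$ are the hyperbolic spaces centered at $p$, the $c<0$ restriction is vacuous) that the paper leaves to the reader.
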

\end{remark}

\section{Analysis of the Lorentzian distance function from an achronal spacelike hypersurface}
\label{analy}
Given $N^n\subset M^{n+1}$ an achronal spacelike hypersurface, we
can define the Lorentzian distance function from $N$, $d_N:M\rightarrow [0,+\infty]$, by
\begin{displaymath}
d_N(q):=\sup \{d(p,q): p\in N\},
\end{displaymath}
for all $q\in M$. As in the previous case, to guarantee the
smoothness of $d_N$, we need to restrict this function on certain
special subsets of $M$. Let $\eta$ be the future-directed Gauss map of $N$. Then, we can define the function
$s_N:N\rightarrow [0,+\infty]$ by
\[
s_N(p)=\sup\{ t\geq 0 : d_N(\gamma_p(t))=t\},
\]
where $\gamma_p:[0,a)\rightarrow M$ is the future inextendible
geodesic starting at $p$ with initial velocity $\eta_p$. Then, we
can define
\[
\tilde{\mathcal{I}}^{+}(N)=\{ t\eta_p: \mbox{ for all } p\in N
\mbox{ and } 0<t<s_N(p) \}
\]
and consider the subset $\mathcal{I}^{+}(N)\subset M$ given by
\[
\mathcal{I}^{+}(N)=\mathrm{exp}_N(\mathrm{int}(\tilde{\mathcal{I}}^{+}(N)))\subset
I^{+}(N),
\]
where $\mathrm{exp}_N$ denotes the exponential map with respect to
the hypersurface $N$.

Observe that
\[
\mathrm{exp}_N:
\mathrm{int}(\tilde{\mathcal{I}}^{+}(N))\rightarrow
\mathcal{I}^{+}(N)
\]
is a diffeomorphism and $\mathcal{I}^{+}(N)$ is an open subset
(possible empty). In the next auxiliary result we collect some interesting properties about $d_N$ (see \cite[Section 3.2]{EGK}).
\begin{lemma}
\label{lemaEduardo2} Let $N$ be an achronal spacelike hyersurface
in a spacetime $M$.
\begin{enumerate}
\item If $N$ is compact and $(M,g)$ is globally hyperbolic, then $s_N(p)>0$ for all $p\in N$ and
$\mathcal{I}^{+}(N)\neq\emptyset$. \item If
$\mathcal{I}^{+}(N)\neq\emptyset$, then $d_N$ is smooth on
$\mathcal{I}^{+}(N)$ and its gradient $\nablao d_N$ is a
past-directed timelike (geodesic) unit vector field on
$\mathcal{I}^{+}(N)$.
\end{enumerate}
\end{lemma}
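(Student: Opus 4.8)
The plan is to follow the same scheme used for the distance from a point (Lemma~\ref{lemaEduardo1}), replacing the radial geodesics from $p$ by the geodesics issuing orthogonally from $N$, and to transcribe the arguments of \cite[Section~3.2]{EGK}.

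For part~(1), I would first exploit global hyperbolicity to secure maximizing geodesics. Since $M$ is globally hyperbolic and $N$ is compact, the Lorentzian distance $d_N$ is finite and continuous, and for every $q\in I^+(N)$ there is a future-directed causal geodesic from some point of $N$ to $q$ whose Lorentzian length equals $d_N(q)$ (an Avez--Seifert type existence, using that $J^+(N)\cap J^-(q)$ is compact). The next step is a first-variation argument: if such a maximizing geodesic $\sigma$ starts at $p_0\in N$, then varying the initial endpoint along $N$ and using that $\sigma$ is a critical point of the length functional forces $\sigma'(0)$ to be orthogonal to $T_{p_0}N$; since $N$ is spacelike and $\sigma$ is future-directed causal, $\sigma'(0)$ is a positive multiple of the future unit normal $\eta_{p_0}$. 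Hence, up to reparametrization, every maximizer is one of the normal geodesics $\gamma_p$. Finally, focal-point theory (no focal point of $N$ occurs along $\gamma_p$ before a definite parameter value, by smooth dependence on initial conditions and compactness of $N$) shows that for small $t>0$ the segment $\gamma_p\big|_{[0,t]}$ is the \emph{unique} maximizer from $N$ to $\gamma_p(t)$, so that $d_N(\gamma_p(t))=t$. This yields $s_N(p)>0$ for every $p$, and compactness of $N$ gives a uniform lower bound, whence $\mathrm{int}(\tilde{\mathcal{I}}^{+}(N))\neq\emptyset$ and therefore $\mathcal{I}^{+}(N)\neq\emptyset$.

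For part~(2), I would use that, by construction, $\mathrm{exp}_N$ is a diffeomorphism from $\mathrm{int}(\tilde{\mathcal{I}}^{+}(N))$ onto $\mathcal{I}^{+}(N)$, and that $d_N(\mathrm{exp}_N(t\eta_p))=t$ there. Writing $(p,t)=\mathrm{exp}_N^{-1}(q)$ exhibits $d_N$ as the smooth arc-length coordinate $t$, so $d_N$ is smooth on $\mathcal{I}^{+}(N)$. For the gradient, the key identity is the eikonal equation $\g{\nablao d_N}{\nablao d_N}=-1$, which I would obtain from the Lorentzian Gauss lemma: the level sets $\{d_N=t\}$ are orthogonal to the normal geodesics, so $\nablao d_N$ is proportional to $\gamma_p'$ at each point. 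Differentiating $d_N(\gamma_p(t))=t$ gives $\g{\nablao d_N}{\gamma_p'}=1$, and combined with $\g{\gamma_p'}{\gamma_p'}=-1$ this forces $\nablao d_N=-\gamma_p'$; thus $\nablao d_N$ is past-directed, timelike, and unit. That $\nablao d_N$ is a geodesic vector field then follows formally: from $\g{\nablao d_N}{\nablao d_N}=-1$ and the symmetry of the Hessian one gets $\g{\nablao_{\nablao d_N}\nablao d_N}{X}=\g{\nablao_X\nablao d_N}{\nablao d_N}=\tfrac{1}{2}X\g{\nablao d_N}{\nablao d_N}=0$ for every $X$, whence $\nablao_{\nablao d_N}\nablao d_N=0$.

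I expect the main obstacle to lie in part~(1): making the first-variation/orthogonality step and, above all, the focal-point uniqueness rigorous. One must rule out competing maximizers that do not emanate orthogonally from $N$ and control the first focal distance uniformly over the compact $N$; this is precisely where global hyperbolicity and compactness enter, and where the careful Lorentzian index-form estimates of \cite{EGK} are needed. By contrast, part~(2) is essentially a formal consequence once $\mathrm{exp}_N$ is known to be a diffeomorphism onto $\mathcal{I}^{+}(N)$.
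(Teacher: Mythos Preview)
The paper does not actually prove this lemma: it is stated as an auxiliary result with the parenthetical reference ``(see \cite[Section 3.2]{EGK})'' and no argument is given. Your proposal is therefore not competing with a proof in the paper but rather supplying one, and the sketch you give is precisely the standard route from \cite{EGK} that the authors are invoking: Avez--Seifert existence of maximizers from a compact set in a globally hyperbolic spacetime, orthogonality from first variation, absence of focal points on an initial segment to get $s_N(p)>0$, and then the normal exponential diffeomorphism plus the Gauss lemma for part~(2). So your approach is correct and is exactly the one the paper defers to.

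One small remark on your self-diagnosed ``main obstacle'': the uniform positive lower bound on the first focal time over a compact $N$ is genuinely needed to conclude $\mathrm{int}(\tilde{\mathcal{I}}^{+}(N))\neq\emptyset$, and you are right that this is where compactness of $N$ is used; but it follows from continuity of the focal radius (or directly from smooth dependence of Jacobi fields on initial data) rather than from any index-form estimate, so it is less delicate than you suggest.
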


To state our results concerning the Lorentzian distance function from an achronal spacelike hypersurface, we need to introduce the following concepts.
\begin{definition}
Let $N^n$ be a spacelike hypersurface in $M^{n+1}$ with future-directed Gauss map $\eta$. For all $p\in N$, let $\gamma_p$ be the
normal future-directed unit timelike geodesic with $\gamma_p(0)=p$ and $\gamma_p'(0)=\eta_p$. A normal Jacobi vector field along
$\gamma_p$ is said to be $N$-Jacobian if $J'(0)=-A_N(J(0))$, where
$A_N$ denotes the shape operator of $N$ with respect to $\eta$.
\end{definition}
\begin{definition}
Let $N^n$ be a spacelike hypersurface of $M^{n+1}$ with future-directed Gauss map $\eta$ and let $\gamma:[0,s]\rightarrow M$
be a future-directed unit timelike geodesic orthogonal to $N$. If $X$ and $Y$
are vector fields along $\gamma$, the index form of the geodesic
$\gamma$ with respect to $N$ is given by
\begin{displaymath}
I_N(X,Y)=-\int_0^s (\g{X'}{Y'}-\g{R(X,\gamma')\gamma'}{Y})dt
+\g{A_N X}{Y}
\end{displaymath}
where $A_N$ denotes the shape operator of $N$ with respect to $\eta$. $I_N$ defines a bilinear form on the space of vector fields $X$, $Y$ orthogonal to $\gamma$.
\end{definition}

\begin{remark} Consider the standard model of a simply connected complete Lorentzian space form
\[
M^{n+1}_c=
\begin{cases}
\mathbb{S}^{n+1}_1(1/\sqrt{c})=\{ x\in \mathbb{R}^{n+2}_1: \g{x}{x}=1/c\}&\text{if $c>0$}\\
\mathbb{L}^{n+1}=\mathbb{R}^{n+1}_1 &\text{if $c=0$}\\
\mathbb{H}^{n+1}_1(1/\sqrt{-c})=\{ x\in \mathbb{R}^{n+2}_2:
\g{x}{x}=-1/c\}&\text{if $c<0$}.
\end{cases}
\]
In these space forms, we have the following preferred spacelike hypersurfaces, which are totally geodesic in $M^{n+1}_c$:
\[
N_c=
\begin{cases}
\mathbb{S}^{n}(1/\sqrt{c})=\{ x\in \mathbb{S}^{n+1}_1(1/\sqrt{c}): x_{n+2}=0\}&\text{if $c>0$}\\
\mathbb{R}^{n}=\{ x\in \mathbb{L}^{n+1}: x_{n+1}=0\} &\text{if $c=0$}\\
\mathbb{H}^{n}(1/\sqrt{-c})=\{ x\in
\mathbb{H}^{n+1}_1(1/\sqrt{-c}): x_{n+1}>0, x_{n+2}=0\}&\text{if
$c<0$}.
\end{cases}
\]
Observe that when $c\geq 0$, $M_c^{n+1}$ is globally hyperbolic and the hypersurfaces $N_c$ are Cauchy hypersurfaces with $\mathcal{I}^{+}(N_c)=I^+(N_c)$.

When we consider the totally geodesic hypersurfaces $N_c$ immersed
in the Lorentzian space forms $M_c^{n+1}$, it is easy to compute,
using the Jacobi equations, the $N_c$-Jacobi fields. To see it, consider $\gamma_c:[0,s]\rightarrow M_c$ a future-directed, unit timelike geodesic
emanating orthogonally from $N_c$ (with $s<\pi/(2\sqrt{-c})$ when $c<0$). Then the $N_c$-Jacobi field $J_c(t)$
is given by $J_c(t)=\mathrm{c}_c(t) E_c(t)$, where $E_c(t)$ is a normal parallel vector field along $\gamma_c$,  and
$$
\mathrm{c}_c(t) = \begin{cases} \frac{\cosh(\sqrt{c}\,t)}{\cosh(\sqrt{c}\,s)}&\text{if $c>0$ and $0\leq t\leq s$}\\
\phantom{\sqrt{c}} 1&\text{if $c=0$ and $0\leq t\leq s$}\\
\frac{\cos(\sqrt{-c}\,t)}{\cos(\sqrt{-c}\,s)}&\text{if $c<0$ and $0\leq t\leq s<\pi/(2\sqrt{-c})$} .\end{cases}
$$
In this case, the index form $I_{N_c}$ acting on the $N_c$-Jacobi fields is given by
\begin{eqnarray}
\label{eq9}
\nonumber I_{N_c}(J_c,J_c) & = & -\int_0^s (\g{J'_c}{J'_c}+c \g{J_c}{J_c})dt\\
{} & = & -\int_0^s (\mathrm{c}'_c(t)^2+c \mathrm{c}_c(t)^2)\g{E_c(t)}{E_c(t)}dt=-F_c(s)\g{J_c(s)}{J_c(s)},
\end{eqnarray}
since $\g{E_c(t)}{E_c(t)}=\g{E_c(s)}{E_c(s)}=\g{J_c(s)}{J_c(s)}$ is constant, where
\[
F_c(s)=
\begin{cases}
\sqrt{c}\tanh(\sqrt{c}\,s) &\text{if $c>0$ and $s>0$}\\
\phantom{\sqrt{c}} 0 &\text{if $c=0$ and $s>0$}\\
-\sqrt{-c}\tan(\sqrt{-c}\,s) &\text{if $c<0$ and $0<s<\pi/(2\sqrt{-c})$}.
\end{cases}
\]
\end{remark}

\begin{theorem}\label{maximality}(Lorentzian version of \cite[Theorem 32.1.1]{Bu}) Let $N^n$ be a spacelike hypersurface of $M^{n+1}$ with future-directed
Gauss map $\eta$. Given $p\in N$, let $\gamma:[0,s]\rightarrow M$
be the normal future-directed unit timelike geodesic with
$\gamma(0)=p$ and $\gamma'(0)=\eta_p$, and suposse there are no
points on $\gamma$ which are focal along $\gamma$ to $N$. Let $J$
be an $N$-Jacobian vector filed along $\gamma$. Then, for every
vector field $X$ along $\gamma$ which is not identically zero and
orthogonal to $\gamma$ such that $X(s)=J(s)$ it holds
\begin{displaymath}
I_N(J,J)\geq I_N(X,X),
\end{displaymath}
with equality if and only if $J=X$.
\end{theorem}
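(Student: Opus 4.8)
The plan is to follow the classical index-lemma strategy adapted to the $N$-index form, reducing the asserted inequality to the non-positivity of $I_N$ on vector fields that vanish at the endpoint $t=s$. Setting $W=X-J$, so that $W$ is orthogonal to $\gamma$ with $W(s)=0$, bilinearity and symmetry of $I_N$ give
\[
I_N(X,X)=I_N(J,J)+2I_N(J,W)+I_N(W,W).
\]
Thus it suffices to prove two things: (a) the cross term $I_N(J,W)$ vanishes, and (b) $I_N(W,W)\le 0$, with equality only when $W\equiv 0$. Granting these, $I_N(X,X)=I_N(J,J)+I_N(W,W)\le I_N(J,J)$, with equality forcing $W\equiv 0$, i.e. $X=J$.

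For (a) I would integrate $\int_0^s\g{J'}{W'}\,dt$ by parts, obtaining $\g{J'}{W}\big|_0^s-\int_0^s\g{J''}{W}\,dt$, and then use the Jacobi equation $J''=-R(J,\gamma')\gamma'$ to cancel the curvature integral. Since $W(s)=0$, the boundary contribution at $t=s$ disappears, leaving
\[
I_N(J,W)=\g{J'(0)}{W(0)}+\g{A_N J(0)}{W(0)}=\g{J'(0)+A_N J(0)}{W(0)}.
\]
The $N$-Jacobian condition $J'(0)=-A_N J(0)$ then forces $I_N(J,W)=0$; here it is essential that the boundary term $\g{A_N X}{Y}$ in the definition of $I_N$ is evaluated at $t=0$ and that $A_N$ is self-adjoint, so it pairs correctly with $J'(0)$.

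For (b) the key is that the absence of focal points to $N$ along $\gamma$ lets me fix a basis $\{J_1,\dots,J_n\}$ of $N$-Jacobi fields (with $J_i(0)$ a basis of $T_pN$ and $J_i'(0)=-A_N J_i(0)$) whose values $J_i(t)$ span the normal space of $\gamma$ for every $t\in[0,s]$. Writing $W=\sum_i\psi_i J_i$ with smooth coefficients, self-adjointness of $A_N$ makes the matrix $\g{J_i}{J_j'}=\g{J_i'}{J_j}$ symmetric, and a direct computation yields the splitting
\[
\g{W'}{W'}-\g{R(W,\gamma')\gamma'}{W}=\Big|\sum_i\psi_i'J_i\Big|^2+\frac{d}{dt}\sum_{i,j}\psi_i\psi_j\g{J_i}{J_j'}.
\]
Integrating and evaluating the total derivative at the endpoints (using $\psi_i(s)=0$ at $t=s$, and $\g{J_i(0)}{J_j'(0)}=-\g{A_N J_i(0)}{J_j(0)}$ at $t=0$) produces exactly $\g{A_N W(0)}{W(0)}$ as boundary contribution, which cancels the explicit $A_N$ term in $I_N(W,W)$. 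What survives is $I_N(W,W)=-\int_0^s\big|\sum_i\psi_i'J_i\big|^2\,dt\le 0$, since the normal directions $J_i(t)$ are spacelike and the integrand is a genuine (Riemannian) norm. Equality forces $\sum_i\psi_i'J_i\equiv 0$, hence each $\psi_i$ constant; combined with $\psi_i(s)=0$ this gives $W\equiv 0$.

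The main obstacle is step (b): one must justify the basis property of the $N$-Jacobi fields from the no-focal-point hypothesis (so that the representation $W=\sum_i\psi_i J_i$ with smooth $\psi_i$ is legitimate, and so that $\sum_i\psi_i'J_i\equiv0$ really forces all $\psi_i$ constant), and then carry out the boundary bookkeeping carefully so that the total-derivative term produced by the index-lemma identity matches and cancels the $A_N$-term built into the definition of $I_N$. The sign conventions at $t=0$, where the shape operator enters both the initial condition and the index form, are the delicate point; everything else is routine integration by parts.
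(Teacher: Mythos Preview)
Your argument is correct and is essentially the same approach as the paper's: the paper simply refers to the Riemannian proof in \cite[Theorem 32.1.1]{Bu} and notes that the identity there becomes $I_N(X,X)=I_N(J,J)-\int_0^s\g{A}{A}\,dt$ with $A$ (your $\sum_i\psi_i'J_i$) spacelike, so the integral is nonnegative. You have written out exactly this computation in full, including the cross-term vanishing and the index-lemma identity; the one point worth making explicit is that the symmetry $\g{J_i}{J_j'}=\g{J_i'}{J_j}$ uses not only the self-adjointness of $A_N$ at $t=0$ but also the Lagrange identity (constancy of $\g{J_i'}{J_j}-\g{J_i}{J_j'}$ along $\gamma$) to propagate it to all $t$.
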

\begin{proof}
The proof follows the ideas of the proof of Theorem 32.1.1 of
\cite{Bu}, taking into account that $\gamma$ being timelike,
equation $(2)$ in \cite[pag. 237]{Bu} becomes
\begin{displaymath}
I_N(X,X)=I_N(J,J)-\int_0^s \g{A}{A} dt,
\end{displaymath}
where in our case $A$ is a spacelike vector field along $\gamma$.
In particular, $\g{A}{A}\geq 0$ and then $I_N(J,J)\geq I_N(X,X)$.
\end{proof}

Using this, we can state the following comparison results for the Hessian of the
function $d_N$.
\begin{lemma}
\label{comparacion1achro} Let $M^{n+1}$ be an $(n+1)$-dimensional
spacetime such that $K_M(\Pi)\leq c$, $c\in \mathbb{R}$, for all timelike
planes in $M$. Let $N^n\subset M$ be an achronal spacelike
hypersurface with positive semi-definite second fundamental form (with respect to the
unique future-directed timelike unit normal field) and such that $\mathcal{I}^{+}(N)\neq \emptyset$.
Let $q\in \mathcal{I}^{+}(N)$ (with $d_N(q)<\pi/(2\sqrt{-c})$ when $c<0$).  Then, for every spacelike vector
$x\in T_{q}M$ orthogonal to $\nablao d_N(q)$ it holds that
\[
\nablao^2d_N(x,x)\geq -F_c(d_N(q))\g{x}{x},
\]
where $\nablao^2$ stands for the Hessian operator on $M$. When $c<0$ but $d_N (q)\geq \pi/(2\sqrt{-c})$, then it still holds that
\[
\nablao^2d_N(x,x)\geq 0.
\]
\end{lemma}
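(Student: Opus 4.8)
The plan is to mirror the proof of Lemma \ref{comparacion1}, replacing the point-based index form by the index form $I_N$ relative to the hypersurface, the ordinary Jacobi fields by $N$-Jacobi fields, and the sine-type comparison function $\mathrm{s}_c$ by the cosine-type function $\mathrm{c}_c$ introduced in the preceding remark. The single genuinely new ingredient is the boundary term $\g{A_N X(0)}{X(0)}$ appearing in $I_N$, which is precisely where the hypothesis that the second fundamental form of $N$ be positive semi-definite will be used.

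First I would fix the radial normal geodesic. Since $q\in\mathcal{I}^{+}(N)$ and $\mathrm{exp}_N$ is a diffeomorphism on $\mathrm{int}(\tilde{\mathcal{I}}^{+}(N))$, let $\gamma:[0,s]\rightarrow\mathcal{I}^{+}(N)$ be the future-directed unit timelike geodesic with $\gamma(0)=p\in N$, $\gamma'(0)=\eta_p$ and $\gamma(s)=q$, where $s=d_N(q)$; recall that $\gamma'(s)=-\nablao d_N(q)$. As in \cite[Section 3.2]{EGK}, the second variation of $d_N$ along such a geodesic produces $\nablao^2 d_N(x,x)=I_N(J,J)$, where $J$ is the $N$-Jacobi field along $\gamma$ with $J(s)=x$ (so $J'(0)=-A_N(J(0))$). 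Because $\mathrm{exp}_N$ is a diffeomorphism there is no point focal to $N$ along $\gamma$, hence Theorem \ref{maximality} applies and gives $\nablao^2 d_N(x,x)=I_N(J,J)\geq I_N(X,X)$ for every field $X$ along $\gamma$, orthogonal to $\gamma$, with $X(s)=x$.

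Next I would insert the comparison field. Let $Y$ be the parallel field along $\gamma$ with $Y(s)=x$; since $x\perp\gamma'(s)$, $Y$ stays orthogonal to $\gamma$ and $Y(0)\in T_pN$. Put $X(t)=\mathrm{c}_c(t)Y(t)$, so that $X(s)=x$, $\g{X}{X}=\mathrm{c}_c(t)^2\g{x}{x}$ and $\g{X'}{X'}=\mathrm{c}_c'(t)^2\g{x}{x}$. Expanding $I_N(X,X)$ and using $K(t)\leq c$ on the timelike plane spanned by $X$ and $\gamma'$ gives
\[
I_N(X,X)\geq -\int_0^s\left(\mathrm{c}_c'(t)^2+c\,\mathrm{c}_c(t)^2\right)dt\ \g{x}{x}+\g{A_N X(0)}{X(0)}.
\]
The last term equals $\mathrm{c}_c(0)^2\g{A_N Y(0)}{Y(0)}\geq 0$ because $A_N$ is positive semi-definite and $Y(0)$ is spacelike; discarding it and recognising from (\ref{eq9}) that $\int_0^s(\mathrm{c}_c'(t)^2+c\,\mathrm{c}_c(t)^2)\,dt=F_c(s)$ yields $\nablao^2 d_N(x,x)\geq -F_c(d_N(q))\g{x}{x}$. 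For the degenerate range $c<0$, $d_N(q)\geq\pi/(2\sqrt{-c})$, I would observe that $K_M(\Pi)\leq c<0$, so in particular $K_M(\Pi)\leq 0$, and apply the case $c=0$, in which $F_0\equiv 0$, to conclude $\nablao^2 d_N(x,x)\geq 0$.

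The main obstacle is establishing the index-form representation $\nablao^2 d_N(x,x)=I_N(J,J)$ with the correct boundary contribution: one must check that the second variation of $d_N$ along a geodesic leaving $N$ orthogonally produces exactly the $\g{A_N J(0)}{J(0)}$ term, and that the $N$-Jacobi condition $J'(0)=-A_N(J(0))$ is the right initial datum. This is the hypersurface analogue of \cite[Proposition 3.3]{EGK}; granting it, together with the focal-point-free maximality supplied by Theorem \ref{maximality}, everything else is a direct transcription of Lemma \ref{comparacion1} with $\mathrm{c}_c$ and $F_c$ in place of $\mathrm{s}_c$ and $f_c$, the role of the vanishing endpoint condition now being played by the non-negativity of the shape-operator boundary term.
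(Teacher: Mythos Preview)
Your proposal is correct and follows essentially the same route as the paper: both proofs invoke the representation $\nablao^2 d_N(x,x)=I_N(J,J)$ from \cite[Proposition 3.7]{EGK}, apply the maximality Theorem \ref{maximality} with the comparison field $X(t)=\mathrm{c}_c(t)Y(t)$, use $K_M(\Pi)\leq c$ on the integrand, and discard the boundary term via the positive semi-definiteness of $A_N$; the case $c<0$, $d_N(q)\geq\pi/(2\sqrt{-c})$ is handled identically by falling back to $c=0$.
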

\begin{proof}
The proof follows the ideas of the proof of Lemma \ref{comparacion1}. Given $q\in \mathcal{I}^+(N)$, there exists $p \in N$ such that
$q=\mathrm{exp}_N(s \eta_p)$ with $s=d_N(q)$. Let $\gamma:[0,s]\rightarrow M$ be the normal future-directed unit timelike geodesic with
$\gamma(0)=p$ and $\gamma'(0)=\eta_p$.  From \cite[Proposition 3.7]{EGK}, we know that

\[
\nablao^2d_N(x,x)=I_N(J,J),
\]
where $J$ is the $N$-Jacobi field along $\gamma$ with $J(s)=x$. By Theorem \ref{maximality}  we get that
\begin{equation}
\label{eq7}
\nablao^2d_N(x,x)=I_N(J,J)\geq I_N(X,X),
\end{equation}
for every normal vector field $X$ along $\gamma$ such that  $X(s)=J(s)=x$. Assume now that $s<\pi/(2\sqrt{-c})$ when $c<0$, and define $X(t)=\mathrm{c}_c(t)Y(t)$, where $Y(t)$ is the (unique) parallel vector field along $\gamma$
with $Y(s)=x$. From (\ref{eq7}) we obtain that
\begin{eqnarray*}
\nablao^2 d_N(X,X) & \geq & -\int_0^s(\g{X'(t)}{X'(t)}-\g{R(X(t),\gamma'(t))\gamma'(t)}{X(t)})dt\\
{} & {} & +\g{A_N (X(0))}{X(0)}\\
{} &\geq&  -\int_0^s(\g{X'(t)}{X'(t)}+c\g{X(t)}{X(t)})dt = -F_c(s)\g{x}{x}.
\end{eqnarray*}

Finally, when $c<0$ but $d_p(q)\geq\pi/(2\sqrt{-c})$, then $K_M(\Pi)\leq c<0$ and we get that
\[
\nablao^2d_N(x,x)\geq-F_0(d_p(q))\g{x}{x}=0.
\]
\end{proof}
\begin{lemma}
\label{comparacionCHINOachro} Let $M^{n+1}$ be an
$(n+1)$-dimensional spacetime such that $K_M(\Pi)\geq c$, $c\in \mathbb{R}$, for all timelike planes in $M$. Let $N^n\subset M$ be an
achronal spacelike hypersurface with negative semi-definite second fundamental form (with
respect to the unique future-directed timelike unit normal field)
 and such that $\mathcal{I}^{+}(N)\neq
\emptyset$. Let $q\in \mathcal{I}^{+}(N)$ (with $d_N(q)<\pi/(2\sqrt{-c})$ when $c<0$).  Then, for every
spacelike vector $x\in T_{q}M$ orthogonal to $\nablao d_N(q)$ it
holds that
\[
\nablao^2d_N(x,x)\leq -F_c(d_N(q))\g{x}{x},
\]
where $\nablao^2$ stands for the Hessian operator on $M$.
\end{lemma}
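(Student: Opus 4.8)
The plan is to adapt the argument of Lemma \ref{comparacionCHINO} to the achronal setting, using the index form $I_N$ and the $N$-Jacobi machinery introduced for Lemma \ref{comparacion1achro}, but running the curvature comparison in the reverse direction. Given $q\in\mathcal{I}^+(N)$, I would take $p\in N$ with $q=\mathrm{exp}_N(s\eta_p)$ and $s=d_N(q)$, and let $\gamma:[0,s]\rightarrow M$ be the normal future-directed unit timelike geodesic with $\gamma(0)=p$, $\gamma'(0)=\eta_p$. By \cite[Proposition 3.7]{EGK} one has $\nablao^2d_N(x,x)=I_N(J,J)$, where $J$ is the $N$-Jacobi field along $\gamma$ with $J(s)=x$. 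In contrast to Lemma \ref{comparacion1achro}, Theorem \ref{maximality} cannot be invoked directly in $M$ here, since it would only bound $I_N(J,J)$ from below; instead the actual field $J$ must be retained and estimated.

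Writing the curvature term as $\g{R(J,\gamma')\gamma'}{J}=-K(t)\g{J}{J}$, where $K(t)$ is the sectional curvature of the timelike plane spanned by $J(t)$ and $\gamma'(t)$, the index form becomes
\[
I_N(J,J)=-\int_0^s(\g{J'}{J'}+K(t)\g{J}{J})dt+\g{A_N J(0)}{J(0)},
\]
so the hypothesis $K(t)\geq c$ gives
\[
\nablao^2d_N(x,x)\leq-\int_0^s(\g{J'}{J'}+c\g{J}{J})dt+\g{A_N J(0)}{J(0)}.
\]

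Next I would transfer $J$ to the space form exactly as in Lemma \ref{comparacionCHINO}: choosing an orthonormal parallel frame $\{E_1,\ldots,E_{n+1}\}$ along $\gamma$ with $E_{n+1}=\gamma'$, writing $J=\sum_{i=1}^n\lambda_i E_i$, and setting $X_c=\sum_{i=1}^n\lambda_i E^c_i$ along a unit timelike geodesic $\gamma_c$ emanating orthogonally from $N_c$ in $M^{n+1}_c$. As in that proof the integrand is unchanged by the transfer, so the integral above equals $-\int_0^s(\g{X'_c}{X'_c}_c-\g{R_c(X_c,\gamma'_c)\gamma'_c}{X_c}_c)dt$. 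Since $N_c$ is totally geodesic, $A_{N_c}=0$ and this is precisely $I_{N_c}(X_c,X_c)$; and since $A_N$ is negative semi-definite, $\g{A_N J(0)}{J(0)}\leq0$ may be discarded, leaving $\nablao^2d_N(x,x)\leq I_{N_c}(X_c,X_c)$. Finally, because $s<\pi/(2\sqrt{-c})$ when $c<0$ there are no points focal to $N_c$ along $\gamma_c$, so Theorem \ref{maximality} applies in $M^{n+1}_c$ and, by (\ref{eq9}), $I_{N_c}(X_c,X_c)\leq I_{N_c}(J_c,J_c)=-F_c(s)\g{x}{x}$, where $J_c$ is the $N_c$-Jacobi field with $J_c(s)=X_c(s)$ and $\g{X_c(s)}{X_c(s)}_c=\g{x}{x}$. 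This yields the claimed inequality.

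The main obstacle is coordinating the boundary term with the curvature transfer so that every inequality points the right way. One must verify that the totally geodesic character of $N_c$ makes the transferred integral equal $I_{N_c}(X_c,X_c)$ with no residual boundary contribution, while the negative semi-definiteness of $A_N$ is exactly the condition that permits discarding $\g{A_N J(0)}{J(0)}$ in the direction needed for an upper bound. This sign bookkeeping is opposite to that of Lemma \ref{comparacion1achro}, so the roles of the curvature bound, the maximality comparison, and the second fundamental form all have to be reversed consistently.
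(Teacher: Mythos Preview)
Your proposal is correct and follows essentially the same route as the paper's proof: both use \cite[Proposition 3.7]{EGK} to express $\nablao^2d_N(x,x)$ as $I_N(J,J)$, apply the curvature bound $K(t)\geq c$ and the negative semi-definiteness of $A_N$ to drop the boundary term, transfer $J$ to a field $X_c$ along $\gamma_c\perp N_c$ in $M^{n+1}_c$ via parallel frames, and then invoke Theorem \ref{maximality} in the space form together with (\ref{eq9}). The only cosmetic difference is that the paper discards the boundary term $\g{A_N J(0)}{J(0)}$ simultaneously with the curvature estimate in one displayed inequality, whereas you carry it one step further before dropping it.
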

\begin{proof}
Similarly, the proof follows the ideas of the proof of Lemma \ref{comparacionCHINO}.
As in the previous proof, given $q\in \mathcal{I}^+(N)$, there exists $p \in N$ such that
$q=\mathrm{exp}_N(s \eta_p)$ with $s=d_N(q)$. Let $\gamma:[0,s]\rightarrow M$ be the normal future-directed unit timelike geodesic with
$\gamma(0)=p$ and $\gamma'(0)=\eta_p$. From \cite[Proposition 3.7]{EGK}, we know that
\begin{eqnarray}
\label{eq8}
\nonumber \nablao^2d_N(x,x) & = & -\int_0^s(\g{J'(t)}{J'(t)}-\g{R(J(t),\gamma'(t))\gamma'(t)}{J(t)})dt\\
\nonumber {} & {} &  +\g{A_N(J(0))}{J(0)}\\
{} &\leq& -\int_0^s(\g{J'(t)}{J'(t)}+c\g{J(t)}{J(t)})dt,
\end{eqnarray}
where $J$ is the $N$-Jacobi field along $\gamma$ such that $J(s)=x$.

Let $\{E_1(t),\ldots,E_{n+1}(t)\}$ be an orthonormal frame of parallel vector fields along $\gamma$ such that
$E_{n+1}=\gamma'$. Write $J(t)=\sum_{i=1}^n\lambda_i(t)E_i(t)$. Consider $\gamma_c:[0,s]\rightarrow M^{n+1}_c$ a future directed timelike unit geodesic in the Lorentzian
space form of constant curvature $c$ orthogonal to $N_c$, and let $\{E^c_1(t),\ldots,E^c_{n+1}(t)\}$ be an orthonormal frame of parallel
vector fields along $\gamma_c$ such that $E^c_{n+1}=\gamma'_c$. Define $X_c(t)=\sum_{i=1}^n\lambda_i(t)E^c_i(t)$, and
observe that
\begin{eqnarray*}
\g{J'(t)}{J'(t)}+c\g{J(t)}{J(t)} & = & \g{X'_c}{X'_c}_c-\g{R_c(X_c,\gamma'_c)\gamma'_c}{X_c}_c,
\end{eqnarray*}
where $\g{}{}_c$ and $R_c$ stand for the metric and Riemannian tensors of $M^{n+1}_c$. Then, (\ref{eq8}) becomes
\begin{displaymath}
\nablao^2d_N(x,x)\leq I_{N_c}(X_c,X_c).
\end{displaymath}
Since there are no focal points of $\gamma_c(0)$ along $\gamma_c$ (recall that $s<\pi/(2\sqrt{-c})$ when $c<0$), by Theorem \ref{maximality}
\begin{equation}
I_{N_c}(X_c,X_c)\leq I_{N_c}(J_c,J_c),
\end{equation}
where $J_c$ stands for the $N_c$-Jacobi field along $\gamma_c$ such that  $J_c(s)=X_c(s)$, and by (\ref{eq9}) we conclude
\[
\nablao^2 d_N(x,x)\leq I_{N_c}(J_c,J_c)=-F_c(s)\g{X_c(s)}{X_c(s)}=-F_c(d_N(q))\g{x}{x}.
\]
\end{proof}

\begin{lemma}
\label{comparacion3achro}
Let $M^{n+1}$ be an $(n+1)$-dimensional spacetime such that
\[
\mathrm{Ric}_M(Z,Z)\geq -nc, \quad c\in\R{},
\]
for every unit timelike vector $Z$. Let $N^n\subset M$ be an achronal
spacelike hypersurface such that $\mathcal{I}^{+}(N)\neq
\emptyset$ and let $q\in \mathcal{I}^{+}(N)$
(with  $d_N(q)<\pi/(2\sqrt{-c})$ when $c<0$). Then
\begin{displaymath}
\Deltao d_N(q)\geq -nF_c(d_N(q))- n \mathrm{c}_c(0)^2 H_N (p),
\end{displaymath}
where $\Deltao$ stands for the (Lorentzian) Laplacian operator on $M$, $H_N$ is the mean curvature of the hypersurface $N$ with respect to the future-directed Gauss map $\eta$, and
$p$ is the orthogonal projection of $q$ on $N$. When $c<0$ but $d_N(q)\geq\pi/(2\sqrt{-c})$, then it
still holds that
\begin{displaymath}
\Deltao d_p(q)\geq -n H_N(p).
\end{displaymath}
\end{lemma}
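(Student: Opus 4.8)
The plan is to trace the proof of Lemma~\ref{comparacion3achro} along the same lines as Lemma~\ref{comparacion3}, which bounds the Laplacian $\Deltao d_p$ from below, but now replacing the Jacobi-field machinery ``from a point'' with the $N$-Jacobi/focal-point machinery ``from a hypersurface'' developed in the preceding remark and in Theorem~\ref{maximality}. The weaker curvature hypothesis $\mathrm{Ric}_M(Z,Z)\geq -nc$ (in place of a bound on all timelike sectional curvatures) forces us to work with a \emph{trace} of the Hessian, so the comparison vector fields must be chosen simultaneously in $n$ orthogonal directions, exactly as in the proof of Lemma~\ref{comparacion3}.

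First I would fix $q\in\mathcal{I}^+(N)$, write $q=\mathrm{exp}_N(s\,\eta_p)$ with $s=d_N(q)$ and $p$ the foot of the perpendicular on $N$, and let $\gamma:[0,s]\to M$ be the normal future-directed unit timelike geodesic with $\gamma(0)=p$, $\gamma'(0)=\eta_p$. Choosing $\{e_1,\dots,e_n\}$ an orthonormal basis of $T_qM$ orthogonal to $\gamma'(s)=-\nablao d_N(q)$, I would start from the trace identity $\Deltao d_N(q)=\sum_{j=1}^n \nablao^2 d_N(e_j,e_j)$. By \cite[Proposition 3.7]{EGK} each term equals $I_N(J_j,J_j)$ for the $N$-Jacobi field $J_j$ with $J_j(s)=e_j$, and Theorem~\ref{maximality} gives $\nablao^2 d_N(e_j,e_j)=I_N(J_j,J_j)\geq I_N(X_j,X_j)$ for any normal comparison field $X_j$ along $\gamma$ with $X_j(s)=e_j$ (using that $\mathcal{I}^+(N)$ avoids focal points, by the diffeomorphism property of $\mathrm{exp}_N$).

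Next I would make the explicit choice $X_j(t)=\mathrm{c}_c(t)E_j(t)$, where $\{E_1(t),\dots,E_{n+1}(t)\}$ is a parallel orthonormal frame along $\gamma$ with $E_j(s)=e_j$, $E_{n+1}=\gamma'$, and $\mathrm{c}_c$ is the function from the remark. Summing the index forms and unwinding the boundary term $\g{A_N X_j(0)}{X_j(0)}$ (which produces $\mathrm{c}_c(0)^2\,\g{A_N E_j(0)}{E_j(0)}$, whose trace is $-n\,\mathrm{c}_c(0)^2 H_N(p)$ after the sign conventions for $H_N$), I would convert the curvature integrand into a Ricci term: $\sum_j \mathrm{c}_c(t)^2\g{R(E_j,\gamma')\gamma'}{E_j}=\mathrm{c}_c(t)^2\,\mathrm{Ric}_M(\gamma',\gamma')$. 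The hypothesis $\mathrm{Ric}_M(\gamma',\gamma')\geq -nc$ then yields $\sum_j I_N(X_j,X_j)\geq -n\int_0^s(\mathrm{c}_c'(t)^2+c\,\mathrm{c}_c(t)^2)\,dt - n\,\mathrm{c}_c(0)^2 H_N(p)=-nF_c(s)-n\,\mathrm{c}_c(0)^2 H_N(p)$, using \eqref{eq9}.

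The main obstacle, as in Lemma~\ref{comparacion3}, is that the trace must absorb $n$ comparison fields while the Ricci bound only controls their \emph{sum}; the clean choice $X_j=\mathrm{c}_c(t)E_j(t)$ is precisely what makes $\g{X_j'}{X_j'}=\mathrm{c}_c'(t)^2$ independent of $j$ and collapses the sectional-curvature sum into a single Ricci term, so no individual timelike-plane bound is needed. For the degenerate case $c<0$ with $d_N(q)\geq\pi/(2\sqrt{-c})$, I would note that $\mathrm{Ric}_M(Z,Z)\geq -nc>0$, so the hypothesis holds a fortiori for the constant $0$; applying the already-proved estimate with $c=0$ gives $F_0\equiv 0$ and $\mathrm{c}_0\equiv 1$, whence $\Deltao d_N(q)\geq -nH_N(p)$, as claimed.
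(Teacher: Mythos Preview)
Your proposal is correct and follows essentially the same route as the paper's own proof: both set up the trace identity $\Deltao d_N(q)=\sum_j \nablao^2 d_N(e_j,e_j)$, invoke \cite[Proposition 3.7]{EGK} together with Theorem~\ref{maximality} to bound each summand below by $I_N(X_j,X_j)$, choose the comparison fields $X_j(t)=\mathrm{c}_c(t)E_j(t)$ from a parallel orthonormal frame, collapse the curvature sum into a Ricci term, and handle the case $c<0$, $d_N(q)\geq\pi/(2\sqrt{-c})$ by reverting to $c=0$. Your treatment of the boundary term $\sum_j\g{A_N X_j(0)}{X_j(0)}=\mathrm{c}_c(0)^2\,\mathrm{tr}(A_N)=-n\,\mathrm{c}_c(0)^2 H_N(p)$ matches the paper's computation exactly.
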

Here, by $p$ being the orthogonal projection of $q$ on $N$, we mean that $p$ is the (unique) point of $N$ such that $q=\mathrm{exp}_N (d_N(q)\eta_p)$.
\begin{proof}
The proof follows the ideas of the proof of Lemma \ref{comparacion3}. Let $\gamma:[0,s]\rightarrow M$ be the normal future-directed unit timelike geodesic with
$\gamma(0)=p$ and $\gamma'(0)=\eta_p$. Let $\{e_1,\ldots,e_{n}\}$ be orthonormal vectors in $T_{q}M$ orthogonal to
$\gamma'(s)=-\nablao d_N(q)$, so that
\beq
\label{eq10}
\Deltao d_N(q)=\sum_{j=1}^n\nablao^2 d_N(e_j,e_j).
\eeq
As in the proof of Lemma \ref{comparacion1achro}, we have that, for every $j=0,\ldots,n$,
\[
\nablao^2d_N(e_j,e_j)\geq I_N(X_j,X_j)
\]
for every normal vector field $X_j$ along $\gamma$ such that $X_j(s)=e_j$ ,
which implies that
\beq
\label{eq11}
\Deltao d_N(q)\geq\sum_{j=1}^n I_N(X_j,X_j).
\eeq

Assume now that $s=d_N(q)<\pi/(2\sqrt{-c})$ when $c<0$, and let $\{E_1(t),\ldots,E_{n+1}(t)\}$ be an orthonormal frame of parallel vector fields along $\gamma$ such that
$E_j(s)=e_j$ for every $j=0,\ldots,n$, and $E_{n+1}=\gamma'$.

Define
\[
X_j(t)=\mathrm{c}_c(t)E_j(t), \quad j=1,\ldots,n.
\]
Then

\begin{eqnarray*}
\sum_{j=1}^n I_\gamma(X_j,X_j) & = & -n\int_0^s
\left(\mathrm{c}'_c(t)^2-\frac{\mathrm{c}_c(t)^2}{n}\mathrm{Ric}_M(\gamma'(t),\gamma'(t))\right)dt \\
{} & {} & + \sum_{j=1}^n\g{A_N(X_j(0))}{X_j(0)} \\
{} & \geq & -n\int_0^s\left(\mathrm{c}'_c(t)^2+c\mathrm{c}_c(t)^2\right)dt +\mathrm{c}_c(0)^2\sum_{j=1}^n\g{A_N(E_j(0))}{E_j(0)}\\
{}&=&-nF_c(s)-n \mathrm{c}_c(0)^2 H_N(p).
\end{eqnarray*}

Finally, when $c<0$ but $d_p(q)\geq\pi/(2\sqrt{-c})$, then $\mathrm{Ric}_M(Z,Z)\geq -nc>0$ and
\[
\Deltao d_N(q)\geq -nF_0(d_p(q))-n \mathrm{c}_0(0)^2 H_N(p)=-n H_N(p).
\]
\end{proof}

Now, let $\psi:\Sigma^n\rightarrow M^{n+1}$ be a spacelike
hypersurface immersed into the spacetime $M$ such that
$\psi(\Sigma)\subset\mathcal{I}^{+}(N)\neq\emptyset$ and let
$v=d_N\circ\psi:\m\rightarrow(0,\infty)$ be the function $d_N$
along the hypersurface. Using the same arguments as in Section \ref{comp} for the Lorentzian distance from a point,
we can state the following bounds for the Hessian and the Laplacian of the function $d_N$ along the hypersurface $\Sigma$,
under appropriate assumptions on the curvature of the spacetime $M$.
\begin{proposition}
\label{p1achro}
Let $M^{n+1}$ be a spacetime such that $K_M(\Pi)\leq c$ ,(resp. $K_M(\Pi)\geq c$) for all timelike planes in $M$.
Assume that there exists an achronal spacelike hypersurface $N^n\subset M$ with positive (resp. negative) semi-definite future second fundamental form and
$\mathcal{I}^{+}(N)\neq\emptyset$, and let $\psi:\Sigma^n\rightarrow M^{n+1}$ be a spacelike hypersurface such that $\psi(\Sigma)\subset\mathcal{I}^{+}(N)$.
Let $v$ denote the Lorentzian distance function from $N$ along the hypersurface \m, (with  $v<\pi/(2\sqrt{-c})$ on \m\ when $c<0$). Then
\beq
\label{deshessiano1achro}
\nabla^2v(X,X)\geq  (\mathrm{resp.} \leq) -F_c(v)(1+\g{X}{\nabla v}^2)-\sqrt{1+|\nabla v|^2}\g{AX}{X}
\eeq
for every unit tangent vector $X\in T\Sigma$, and
\beq
\label{deslpalaciano1achro}
\Delta v\geq   (\mathrm{resp.} \leq) -F_c(v)(n+|\nabla v|^2)+nH\sqrt{1+|\nabla v|^2},
\eeq
where $A$ and $H$ are the future shape operator and the future mean curvature of \m, respectively.
\end{proposition}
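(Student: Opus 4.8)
The plan is to mimic verbatim the computation carried out for the Lorentzian distance from a point in Propositions \ref{p1} and \ref{comparacion5}, simply replacing $r=d_p$ by $d_N$, the function $u$ by $v=d_N\circ\psi$, the model function $f_c$ by $F_c$, and the pointwise Hessian comparisons of Lemmas \ref{comparacion1} and \ref{comparacionCHINO} by their achronal analogues, Lemmas \ref{comparacion1achro} and \ref{comparacionCHINOachro}. Everything reduces to reproducing the pointwise Hessian identity \eqref{hessiano1} with $d_N$ in place of $r$, inserting the appropriate comparison, and tracing.

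First I would record the identity relating the two Hessians. By part (2) of Lemma \ref{lemaEduardo2}, $\nablao d_N$ is a past-directed timelike unit field on $\mathcal{I}^{+}(N)$, so along $\psi(\m)$ we may decompose $\nablao d_N=\nabla v-\g{\nablao d_N}{\nu}\nu$; using $\g{\nablao d_N}{\nablao d_N}=-1$ and $\g{\nablao d_N}{\nu}>0$ gives $\g{\nablao d_N}{\nu}=\sqrt{1+|\nabla v|^2}$ and hence $\nablao d_N=\nabla v-\sqrt{1+|\nabla v|^2}\,\nu$. Differentiating this with the Gauss and Weingarten formulae exactly as in the derivation of \eqref{hessiano1} produces
\[
\nabla^2v(X,X)=\nablao^2d_N(X,X)-\sqrt{1+|\nabla v|^2}\,\g{AX}{X}
\]
for every $X\in T\m$, where $A$ is the future shape operator of \m.

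Next I would bound the first term on the right. Writing $X=X^*-\g{X}{\nablao d_N}\nablao d_N$ with $X^*$ orthogonal to $\nablao d_N$, and using $\nablao_{\nablao d_N}\nablao d_N=0$, gives $\nablao^2d_N(X,X)=\nablao^2d_N(X^*,X^*)$; moreover, as in \eqref{eq6} and since $\g{X}{\nablao d_N}=\g{X}{\nabla v}$ for $X$ tangent to \m\ (because $X\perp\nu$), one has $\g{X^*}{X^*}=1+\g{X}{\nabla v}^2$ for unit $X$. If $K_M(\Pi)\leq c$ and $N$ has positive semi-definite second fundamental form, Lemma \ref{comparacion1achro} yields $\nablao^2d_N(X^*,X^*)\geq -F_c(v)\g{X^*}{X^*}=-F_c(v)(1+\g{X}{\nabla v}^2)$, and substituting into the Hessian identity gives \eqref{deshessiano1achro}. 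In the complementary case $K_M(\Pi)\geq c$ with $N$ negative semi-definite, Lemma \ref{comparacionCHINOachro} reverses the inequality and produces the resp.\ version.

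Finally I would trace \eqref{deshessiano1achro} over an orthonormal frame $\{X_1,\ldots,X_n\}$ of $T\m$; using $\sum_i\g{X_i}{\nabla v}^2=|\nabla v|^2$ and $\sum_i\g{AX_i}{X_i}=\tr A=-nH$ collapses the three terms into \eqref{deslpalaciano1achro}, and likewise in the reversed case. I do not expect any genuine obstacle here, since the argument is a line-by-line transcription of Section \ref{comp}; the only points requiring care are checking that the semi-definiteness hypotheses on $N$ are exactly those needed to invoke the achronal Hessian comparisons, keeping the model function $F_c$ (rather than $f_c$) throughout, and respecting the range restriction $v<\pi/(2\sqrt{-c})$ when $c<0$, under which Lemmas \ref{comparacion1achro} and \ref{comparacionCHINOachro} are valid.
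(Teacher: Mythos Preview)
Your proposal is correct and follows exactly the approach indicated in the paper, which simply states that the proof parallels that of Propositions \ref{p1} and \ref{comparacion5} with $r=d_N$ in place of $r=d_p$ and Lemmas \ref{comparacion1achro} and \ref{comparacionCHINOachro} in place of Lemmas \ref{comparacion1} and \ref{comparacionCHINO}. The details you spell out (the decomposition of $\nablao d_N$, the Hessian identity, the orthogonal splitting $X=X^*-\g{X}{\nablao d_N}\nablao d_N$, and the tracing) are precisely the transcription the paper intends.
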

\begin{proposition}
\label{p2achro}
Let $M^{n+1}$ be an $(n+1)$-dimensional spacetime such that
\[
\mathrm{Ric}_M(Z,Z)\geq -nc, \quad c\in\R{},
\]
for every unit timelike vector $Z$. Assume that there exists an achronal spacelike
hypersurface $N^n\subset M$ with
$\mathcal{I}^{+}(N)\neq\emptyset$, and let $\psi:\Sigma^n\rightarrow M^{n+1}$ be a spacelike hypersurface such that $\psi(\Sigma)\subset\mathcal{I}^{+}(N)$.
Let $v$ denote the Lorentzian distance function from $N$ along the hypersurface \m, (with $v<\pi/(2\sqrt{-c})$ on \m\ when $c<0$). Then
\[
\Delta v\geq -nF_c(v)+\nablao^2d_N(\nu,\nu)+nH\sqrt{1+|\nabla v|^2}-n\mathrm{c}_c(0)^2H_N,
\]
where $\nu$ and $H$ are the future-directed Gauss map and the future mean curvature of \m, respectively, and $H_N$ stands for the future mean curvature of $N$ along the orthogonal projection of \m\ on $N$.
\end{proposition}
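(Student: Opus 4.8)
The plan is to mirror exactly the argument that produced Proposition \ref{comparacion4}, now replacing the distance from a point $r=d_p$ by the distance $d_N$ from the achronal hypersurface. First I would observe that all the infinitesimal identities derived in Section \ref{comp} are insensitive to the origin of the Lorentzian distance: they rely only on the fact that, on $\mathcal{I}^+(N)$, the field $\nablao d_N$ is a past-directed timelike \emph{unit} geodesic vector field. This is precisely what Lemma \ref{lemaEduardo2}(2) guarantees for $d_N$, paralleling Lemma \ref{lemaEduardo1}(2) for $d_p$.

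With $v=d_N\circ\psi$, writing $\nablao d_N=\nabla v-\g{\nablao d_N}{\nu}\nu$ along \m\ and using $\g{\nablao d_N}{\nablao d_N}=-1$ together with $\g{\nablao d_N}{\nu}>0$, one gets $\g{\nablao d_N}{\nu}=\sqrt{1+|\nabla v|^2}$, hence $\nablao d_N=\nabla v-\sqrt{1+|\nabla v|^2}\,\nu$. The Gauss and Weingarten formulae then yield, verbatim as in the derivation of (\ref{hessiano1}) and (\ref{laplaciano1}),
\[
\nabla^2v(X,X)=\nablao^2d_N(X,X)-\sqrt{1+|\nabla v|^2}\g{AX}{X},
\]
and, after tracing,
\[
\Delta v=\Deltao d_N+\nablao^2d_N(\nu,\nu)+nH\sqrt{1+|\nabla v|^2}.
\]

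At this point the only genuinely new ingredient is the d'Alembertian estimate for $d_N$, which is supplied by Lemma \ref{comparacion3achro}: under $\mathrm{Ric}_M(Z,Z)\geq-nc$, and provided $v<\pi/(2\sqrt{-c})$ when $c<0$, one has $\Deltao d_N\geq-nF_c(v)-n\,\mathrm{c}_c(0)^2 H_N$. Substituting this lower bound into the Laplacian identity above gives
\[
\Delta v\geq -nF_c(v)+\nablao^2d_N(\nu,\nu)+nH\sqrt{1+|\nabla v|^2}-n\,\mathrm{c}_c(0)^2 H_N,
\]
which is the asserted inequality.

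The computation is essentially routine once the framework of Section \ref{comp} is in place; the only point requiring care is the bookkeeping of the extra curvature term $-n\,\mathrm{c}_c(0)^2 H_N$ inherited from Lemma \ref{comparacion3achro}. In particular, I would make explicit that, as the foot point of the normal geodesic sweeps over \m, the quantity $H_N$ is to be read off along the orthogonal projection of \m\ onto $N$, namely the map sending $q\in\m$ to the unique $p\in N$ with $q=\mathrm{exp}_N(d_N(q)\eta_p)$, so that $H_N$ becomes a well-defined function on \m; this is exactly the interpretation announced in the statement. No focal-point subtleties reappear here, since those were already absorbed into Lemma \ref{comparacion3achro} through Theorem \ref{maximality}.
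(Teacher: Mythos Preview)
Your proposal is correct and follows essentially the same approach as the paper: derive the analogue of (\ref{laplaciano1}) with $r=d_N$ in place of $d_p$, then plug in the d'Alembertian bound from Lemma \ref{comparacion3achro}. The paper's own proof is just the one-line remark that the argument ``parallels that of Proposition \ref{comparacion4}, simply by taking now $r=d_N$ and using Lemma \ref{comparacion3achro},'' which is exactly what you spelled out.
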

The proof of Propositions \ref{p1achro} and \ref{p2achro} parallels that of Propositions \ref{p1}, \ref{comparacion5} and \ref{comparacion4}, simply by taking now $r=d_N$ and using the Lemmata \ref{comparacion1achro},
\ref{comparacionCHINOachro} and \ref{comparacion3achro}.

Using these inequalities,
we can obtain the following bounds for the future mean curvature of spacelike hypersurfaces. The proofs are similar to
that of Theorems \ref{mainTh1UpperB} and \ref{mainTh1LowerB}.
\begin{theorem}
\label{mainTh1UpperBachro}
Let $M^{n+1}$ be an $(n+1)$-dimensional spacetime such that
\[
\mathrm{Ric}_M(Z,Z)\geq -nc, \quad c\in\mathbb{R},
\]
for every unit timelike vector $Z$. Let $N$ be an achronal spacelike hypersurface in $M$ with $\mathcal{I}^{+}(N)\neq\emptyset$ whose future mean curvature satisfies $\sup H_N<+\infty$.
Let $\psi:\Sigma^n\rightarrow M^{n+1}$ be a
spacelike hypersurface such that  $\psi(\m)\subset\mathcal{I}^+(N)\cap B^+(N,\delta)$ for some $\delta>0$ (with $\delta\leq \pi/(2\sqrt{-c})$ when $c<0$), where
\[
B^+(N,\delta)=\{ q\in I^+(N) : d_N(q)<\delta \}.
\]
If the Omori-Yau maximum principle holds on \m, then
\[
\inf_{\Sigma} H \leq F_c(\sup_\Sigma v)+\mathrm{c}_c(0)^2\sup H_N,
\]
where $H$ is the future mean curvature of \m.
\end{theorem}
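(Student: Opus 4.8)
The plan is to run the argument of Theorem~\ref{mainTh1UpperB} almost verbatim, now with the distance from a point $r=d_p$ replaced by the distance from the achronal hypersurface, $v=d_N\circ\psi$, the model function $f_c$ replaced by $F_c$, and the Laplacian comparison of Proposition~\ref{comparacion4} replaced by its achronal analogue, Proposition~\ref{p2achro}. First I would invoke Proposition~\ref{p2achro}: its hypotheses (the lower Ricci bound on timelike directions, an achronal spacelike $N$ with $\mathcal{I}^{+}(N)\neq\emptyset$, and $\psi(\m)\subset\mathcal{I}^{+}(N)$) all hold here, and since $\psi(\m)\subset B^{+}(N,\delta)$ with $\delta\leq\pi/(2\sqrt{-c})$ when $c<0$, the required bound $v<\pi/(2\sqrt{-c})$ is met. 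This yields, on all of $\m$,
\[
\Delta v\geq -nF_c(v)+\nablao^2d_N(\nu,\nu)+nH\sqrt{1+|\nabla v|^2}-n\,\mathrm{c}_c(0)^2H_N,
\]
where $H_N$ is evaluated at the orthogonal projection onto $N$. Because $\psi(\m)\subset B^{+}(N,\delta)$ forces $\sup_\Sigma v\leq\delta<+\infty$, the Omori--Yau maximum principle applies to $v$ and furnishes a sequence $\{p_k\}$ with $|\nabla v(p_k)|<1/k$, $\Delta v(p_k)<1/k$ and $\sup_\Sigma v-1/k<v(p_k)\leq\sup_\Sigma v\leq\delta$.

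Next I would evaluate the displayed inequality along $\{p_k\}$ and solve for $H$, which gives
\[
\inf_\Sigma H\leq H(p_k)\leq\frac{1/k+nF_c(v(p_k))-\nablao^2d_N(\nu(p_k),\nu(p_k))+n\,\mathrm{c}_c(0)^2H_N(p_k)}{n\sqrt{1+|\nabla v(p_k)|^2}}.
\]
To dispose of the Hessian term I would reuse the normal decomposition of Section~\ref{comp}: writing $\nu(p_k)=\nu^{*}(p_k)-\g{\nu(p_k)}{\nablao d_N(p_k)}\nablao d_N(p_k)$ with $\nu^{*}(p_k)\perp\nablao d_N(p_k)$, the relations $\g{\nablao d_N}{\nablao d_N}=\g{\nu}{\nu}=-1$ give $|\nu^{*}(p_k)|^2=|\nabla v(p_k)|^2\to 0$, while $\nablao_{\nablao d_N}\nablao d_N=0$ yields $\nablao^2d_N(\nu,\nu)=\nablao^2d_N(\nu^{*},\nu^{*})$, so this term tends to $0$. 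The new summand is handled by the bound $H_N(p_k)\leq\sup H_N<+\infty$, which is exactly the role of that hypothesis.

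Finally, letting $k\to\infty$, the denominator tends to $n$, continuity of $F_c$ with $v(p_k)\to\sup_\Sigma v$ gives $F_c(v(p_k))\to F_c(\sup_\Sigma v)$, the Hessian term vanishes, and the last term is controlled by $\mathrm{c}_c(0)^2\sup H_N$; assembling these yields $\inf_\Sigma H\leq F_c(\sup_\Sigma v)+\mathrm{c}_c(0)^2\sup H_N$, as desired. Once Proposition~\ref{p2achro} is in hand the argument is essentially bookkeeping, and I expect the only genuinely delicate point to be justifying that $\nablao^2d_N(\nu^{*}(p_k),\nu^{*}(p_k))\to 0$: one must check that evaluating the smooth Hessian of $d_N$ on the vanishing spacelike vectors $\nu^{*}(p_k)$ really produces a null limit, precisely as in the point case of Theorem~\ref{mainTh1UpperB}. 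The extra $-n\,\mathrm{c}_c(0)^2H_N$ term brings no new difficulty beyond invoking $\sup H_N<+\infty$ and recalling that $H_N$ is read along the projection of the points $p_k$ to $N$.
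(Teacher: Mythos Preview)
Your proposal is correct and follows exactly the approach the paper intends: the paper does not give a separate proof of Theorem~\ref{mainTh1UpperBachro} but simply states that it is proved as Theorem~\ref{mainTh1UpperB}, with $d_p$ replaced by $d_N$, $f_c$ by $F_c$, and Proposition~\ref{comparacion4} by Proposition~\ref{p2achro}, which is precisely what you do. The extra term $n\,\mathrm{c}_c(0)^2H_N$ is handled via the hypothesis $\sup H_N<+\infty$ as you indicate, and the Hessian term is treated by the same normal decomposition and limiting argument used (and glossed over in the same way) in the proof of Theorem~\ref{mainTh1UpperB}.
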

\begin{theorem}
\label{mainTh1LowerBachro}
Let $M^{n+1}$ be an $(n+1)$-dimensional spacetime such that $K_M(\Pi)\geq c$, $c\in\mathbb{R}$, for all timelike
planes in $M$. Let $N$ be an achronal spacelike hypersurface in $M$ with $\mathcal{I}^{+}(N)\neq\emptyset$ whose future second fundamental form is negative
semi-definite. Let $\psi:\Sigma^n\rightarrow M^{n+1}$ be a
spacelike hypersurface such that $\psi(\m)\subset\mathcal{I}^+(N)$.
If the Omori-Yau maximum principle holds on \m\ (and $\inf_\Sigma v<\pi/(2\sqrt{-c})$ when $c<0$), then
\[
\sup_{\Sigma} H \geq F_c(\inf_\Sigma v),
\]
where $H$ is the future mean curvature of \m.
\end{theorem}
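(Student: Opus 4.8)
The plan is to mirror, essentially verbatim, the proof of Theorem \ref{mainTh1LowerB}, replacing the distance from a point $u=d_p\circ\psi$ by the distance $v=d_N\circ\psi$ from the achronal hypersurface $N$ and invoking the achronal comparison machinery of Section \ref{analy}. The point is that our two standing hypotheses---$K_M(\Pi)\geq c$ for all timelike planes and the negative semi-definiteness of the future second fundamental form of $N$---are precisely what is required to use the ``resp.\ $\leq$'' branch of Proposition \ref{p1achro}, which asserts
\[
\Delta v\leq -F_c(v)(n+|\nabla v|^2)+nH\sqrt{1+|\nabla v|^2}
\]
at every point of \m, valid wherever $v<\pi/(2\sqrt{-c})$ in the case $c<0$. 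This single inequality is the entire analytic input; everything else is the maximum principle.

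First I would apply the Omori-Yau maximum principle to the smooth function $v$, which is bounded below by $0$ since it is the restriction of a Lorentzian distance, so $\inf_\Sigma v\geq 0$ is finite. This furnishes a sequence $\{p_k\}_{k\in\mathbb{N}}$ in \m\ with $|\nabla v(p_k)|<1/k$, $\Delta v(p_k)>-1/k$, and $\inf_\Sigma v\leq v(p_k)<\inf_\Sigma v+1/k$. When $c<0$, the standing assumption $\inf_\Sigma v<\pi/(2\sqrt{-c})$ guarantees that for all sufficiently large $k$ one has $v(p_k)<\pi/(2\sqrt{-c})$, so that the Laplacian estimate above is legitimately available at $p_k$.

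Next I would substitute these estimates into the comparison inequality at $p_k$ and solve for $H(p_k)$, obtaining
\[
\sup_\Sigma H\geq H(p_k)\geq\frac{-1/k+F_c(v(p_k))(n+|\nabla v(p_k)|^2)}{n\sqrt{1+|\nabla v(p_k)|^2}}.
\]
Letting $k\to\infty$ and using that $v(p_k)\to\inf_\Sigma v$, that $|\nabla v(p_k)|\to 0$, and the continuity of $F_c$ on the relevant interval (with $\lim_{s\to 0^+}F_c(s)=0$ covering the boundary case $\inf_\Sigma v=0$), the right-hand side converges to $F_c(\inf_\Sigma v)$, which is exactly the asserted bound.

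The argument is almost entirely routine once the comparison inequality is in place, and the only point needing care is the same one that arises in Theorem \ref{mainTh1LowerB}: making sure the sequence $\{p_k\}$ eventually lies in the range $v<\pi/(2\sqrt{-c})$ where both $F_c$ and the Hessian comparison of Lemma \ref{comparacionCHINOachro} are defined when $c<0$. Unlike the point-based Theorem \ref{mainTh1LowerB}, here $F_c$ stays finite at the origin, $F_c(0)=0$, so no blow-up occurs and there is no analogue of the addendum ``$\inf_\Sigma u=0\Rightarrow\sup_\Sigma H=+\infty$''; consequently the main obstacle is merely the bookkeeping of this domain restriction rather than any genuine analytic difficulty.
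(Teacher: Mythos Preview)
Your proposal is correct and matches exactly what the paper does: it states that the proofs of Theorems \ref{mainTh1UpperBachro} and \ref{mainTh1LowerBachro} ``are similar to that of Theorems \ref{mainTh1UpperB} and \ref{mainTh1LowerB},'' and your write-up is precisely the transcription of the proof of Theorem \ref{mainTh1LowerB} with $u$, $f_c$, and Proposition \ref{comparacion5} replaced by $v$, $F_c$, and the ``resp.\ $\leq$'' branch of Proposition \ref{p1achro}. Your remark that $F_c$ remains finite at the origin, so no analogue of the ``$\inf_\Sigma u=0\Rightarrow\sup_\Sigma H=+\infty$'' clause appears, is also on point.
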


\section{Hyperbolicity of spacelike hypersurfaces}
\label{secHyp}
In this Section we consider some function
theoretic properties satisfied by spacelike hypersurfaces with
controlled mean curvature in spacetimes with timelike sectional
curvatures bounded from below.

First of all, we are going to recall a standard characterization of hyperbolicity of a Riemannian manifold.
\begin{lemma}[\cite{Gri}]
A Riemannian manifold $\Sigma^n$ is hyperbolic if and only if it holds one of the two following equivalent conditions:
\begin{itemize}
\item[(a)] There exists a non-constant bounded (from above and from below) subharmonic function globally defined
on $\Sigma$.
\item[(b)] There exists a non-constant positive superharmonic function globally defined on $\Sigma$.
\end{itemize}
\end{lemma}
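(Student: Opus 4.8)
The plan is to read ``hyperbolic'' in its standard potential-theoretic sense, namely that Brownian motion on $\Sigma$ is transient, equivalently that $\Sigma$ carries a positive Green's function $G(x_0,\cdot)$. With this reading I would organise the argument as (hyperbolic) $\Leftrightarrow$ (b) together with (a) $\Leftrightarrow$ (b). The equivalence with (b) is the most transparent: if $\Sigma$ is hyperbolic then $G(x_0,\cdot)$ is harmonic away from $x_0$, positive, and non-constant, hence a non-constant positive superharmonic function, so (b) holds; conversely, a non-constant positive superharmonic function forces the compact subsets of $\Sigma$ to have positive capacity, i.e. $\Sigma$ is non-parabolic and therefore hyperbolic.

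For the implication (b) $\Rightarrow$ (a) I would argue by truncation. Let $v>0$ be a non-constant superharmonic function, so $\Delta v\leq 0$, and set $m=\inf_\Sigma v\geq 0$ and $M=\sup_\Sigma v\in(m,+\infty]$. Fixing any constant $c$ with $m<c<M$, the function $\min(v,c)$ is again superharmonic (the minimum of two superharmonic functions is superharmonic), it is bounded between $m$ and $c$, and it is non-constant because $v$ takes values both below $c$ (since $\inf_\Sigma v=m<c$) and above $c$ (since $\sup_\Sigma v=M>c$). Then $u=-\min(v,c)$ is a non-constant subharmonic function bounded from above and from below, which is exactly condition (a).

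The reverse implication (a) $\Rightarrow$ (b) is where I expect the genuine obstacle, since one must manufacture a positive superharmonic function out of a merely bounded subharmonic one. The clean route is to show that (a) prevents $\Sigma$ from being parabolic: on a parabolic manifold every subharmonic function bounded from above is constant, so a non-constant subharmonic function that is (in particular) bounded above rules out parabolicity and yields hyperbolicity, whence (b) follows from the equivalence already discussed. To make this self-contained one fixes a smooth exhaustion $\Omega_1\subset\Omega_2\subset\cdots$ of $\Sigma$ by relatively compact open sets, solves the Dirichlet problems with data inherited from the bounded subharmonic function, and analyses the limiting equilibrium potentials and capacities. The delicate points are the compactness step, via Harnack's inequality and normal families, needed to extract a non-trivial limit, and the verification that this limit does not collapse to a constant, which is exactly where the non-constancy assumed in (a) enters. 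As this is the classical circle of equivalences of Grigor'yan, in the write-up I would either carry out this exhaustion argument in detail or simply appeal to \cite{Gri} for the equivalence of these characterisations of hyperbolicity.
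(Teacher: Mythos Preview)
Your overall plan is sound, and your treatment of (hyperbolic) $\Leftrightarrow$ (b) and of (b) $\Rightarrow$ (a) by truncation is correct. But you have badly misjudged where the difficulty lies. You write that ``(a) $\Rightarrow$ (b) is where I expect the genuine obstacle'' and then propose to go through an exhaustion argument, equilibrium potentials, and Harnack compactness. In fact this direction is a one-liner, and the paper does exactly that: if $f$ is a non-constant subharmonic function bounded above and below, pick any constant $C>\sup_\Sigma f$; then $C-f$ is positive, non-constant, and superharmonic. That is all.

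It is worth noting that the paper does not attempt to prove the equivalence with hyperbolicity at all; that is entirely delegated to the citation \cite{Gri}. The only argument the paper supplies is the mutual equivalence of (a) and (b), and for that it uses the elementary transformation $f\mapsto C-f$ for (a) $\Rightarrow$ (b), and a smooth composition (they write $f/\sqrt{1+f}$) for (b) $\Rightarrow$ (a). Your truncation $\min(v,c)$ for (b) $\Rightarrow$ (a) is a perfectly good alternative and arguably cleaner, with the usual caveat that $\min(v,c)$ is superharmonic only in the weak sense. Either way, the moral is that both directions of (a) $\Leftrightarrow$ (b) are soft, and your heavy machinery for (a) $\Rightarrow$ (b) should be replaced by the obvious $C-f$.
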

For the equivalence between a) and b), observe that if $f$ is a non-constant bounded (from above and from below)
subharmonic function on $\Sigma$, then choosing $C>\max_\Sigma f$ we obtain $C-f$ a non-constant positive superharmonic
function. Conversely, if $f$ is a non-constant positive superharmonic function on $\Sigma$, then $f/\sqrt{1+f}$ determines
a non-constant bounded (from above and from below) subharmonic function.

As a consequence of our previous results we have the following.
\begin{theorem}\label{MainTh4Hyperbolicity}
Let $M^{n+1}$ be an $(n+1)$-dimensional spacetime, $n\geq 2$, such that $K_M(\Pi)\geq c$ for all timelike planes in $M$.
Assume that there exists a point $p\in M^{n+1}$ such that $\mathcal I^+(p)\neq \emptyset$, and let
$\psi:\m\rightarrow M^{n+1}$ be a spacelike hypersurface with $\psi(\Sigma)\subset \mathcal I^+(p)$. Let us denote by
$u$ the function $d_p$ along the hypersurface, and assume that $u\leq\pi/(2\sqrt{-c})$ if $c<0$. Then
\begin{itemize}
\item[(i)] If the future mean curvature of $\Sigma$ satisfies
\begin{equation}
\label{hy5.2}
H\leq \frac{2 \sqrt{n-1}}{n} f_c(u) \quad \text{ (with $H<f_c(u)$ at some point of \m\ if $n=2$)}
\end{equation}
then $\Sigma$  is hyperbolic.
\item[(ii)] If $c=0$ and $H\leq 0$, then $\Sigma$ is hyperbolic.
\item[(iii)] If $c>0$ and $H\leq \frac{2\sqrt{n-1}}{n}\sqrt{c}$, then $\Sigma$ is hyperbolic.
\end{itemize}
In particular, every maximal hypersurface contained in $\mathcal I^+(p)$ (and satisfying $u<(\pi/2\sqrt{-c})$ if $c<0$)
is hyperbolic.
\end{theorem}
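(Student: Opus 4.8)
The plan is to prove (i) first and then deduce (ii), (iii) and the maximal statement from it, in every case by producing a non-constant positive superharmonic function on $\Sigma$ and invoking the characterization of hyperbolicity recalled just above (the Lemma of \cite{Gri}). The natural candidate is the distance function $u=d_p\circ\psi$ itself, which is strictly positive because $\psi(\Sigma)\subset\mathcal{I}^+(p)\subset I^+(p)$. First I would record that $f_c(u)\geq 0$ throughout: for $c\geq 0$ this is automatic since $u>0$, while for $c<0$ it follows from the standing hypothesis $u\leq\pi/(2\sqrt{-c})$, on which $\sqrt{-c}\cot(\sqrt{-c}\,u)\geq 0$. Since moreover $\pi/(2\sqrt{-c})<\pi/\sqrt{-c}$, the assumption $K_M(\Pi)\geq c$ allows me to use the Laplacian estimate (\ref{deslpalaciano2}) of Proposition \ref{comparacion5} at every point of $\Sigma$.

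The heart of the argument is a completion of the square. Starting from
\[
\Delta u\leq -f_c(u)(n+|\nabla u|^2)+nH\sqrt{1+|\nabla u|^2},
\]
I substitute the bound $nH\leq 2\sqrt{n-1}\,f_c(u)$ coming from $H\leq\frac{2\sqrt{n-1}}{n}f_c(u)$ (legitimate because $\sqrt{1+|\nabla u|^2}>0$) and rearrange to obtain
\[
\Delta u\leq -f_c(u)\left(\sqrt{1+|\nabla u|^2}-\sqrt{n-1}\right)^2\leq 0,
\]
the last step using $f_c(u)\geq 0$. Thus $u$ is a positive superharmonic function, and as soon as it is non-constant the Lemma of \cite{Gri} delivers hyperbolicity.

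It then remains to rule out the case $u\equiv\text{const}$. If $u$ is constant then $\nabla u\equiv 0$ and $\Delta u\equiv 0$, so (\ref{deslpalaciano2}) degenerates to $0\leq -nf_c(u)+nH$, i.e. $H\geq f_c(u)$. Combined with $H\leq\frac{2\sqrt{n-1}}{n}f_c(u)$ and $f_c(u)\geq 0$, this forces $f_c(u)\bigl(1-\tfrac{2\sqrt{n-1}}{n}\bigr)\leq 0$. For $n\geq 3$ the factor $1-\tfrac{2\sqrt{n-1}}{n}$ is strictly positive, so $f_c(u)=0$, which can occur only when $c<0$ and $u\equiv\pi/(2\sqrt{-c})$; for $n=2$ the stipulated strict inequality $H<f_c(u)$ at a point already contradicts $H\geq f_c(u)$. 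Hence $u$ is non-constant except in the single borderline situation $c<0$, $u\equiv\pi/(2\sqrt{-c})$ (which also forces $H\equiv 0$). I expect this degenerate subcase to be the main obstacle: there $u$ carries no information and $\Sigma$ is constrained to lie in the maximal level set $\{d_p=\pi/(2\sqrt{-c})\}$, so one must either analyse the intrinsic geometry of that level set directly or exclude it, noting that a \emph{compact} $\Sigma$ is automatically parabolic and so some non-compactness is implicitly needed for the conclusion.

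Finally, clauses (ii) and (iii) and the maximal statement follow by specialization. For $c=0$ one has $f_0(u)=1/u>0$, so $H\leq 0<\tfrac{2\sqrt{n-1}}{n}f_0(u)$; for $c>0$ one has $f_c(u)=\sqrt{c}\coth(\sqrt{c}\,u)>\sqrt{c}$, so $H\leq\tfrac{2\sqrt{n-1}}{n}\sqrt{c}<\tfrac{2\sqrt{n-1}}{n}f_c(u)$. In both cases the bound of (i) holds with strict inequality \emph{everywhere}, which in particular covers the $n=2$ proviso automatically, so (i) applies and the constant case is excluded. A maximal hypersurface has $H\equiv 0$, which meets the hypothesis of (i) precisely because $f_c(u)\geq 0$ (guaranteed by $u\leq\pi/(2\sqrt{-c})$ when $c<0$); hence its hyperbolicity follows from (i)–(iii), the only caveat being the borderline locus $u=\pi/(2\sqrt{-c})$ discussed above.
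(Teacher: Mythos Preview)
Your argument is essentially the paper's own proof: both apply Proposition~\ref{comparacion5}, show $\Delta u\leq 0$ via the same elementary inequality, rule out constant $u$ by the resulting bound $H\geq f_c(u)$, and then specialize to get (ii)--(iii). Your completion of the square
\[
(n+|\nabla u|^2)-2\sqrt{n-1}\sqrt{1+|\nabla u|^2}=\bigl(\sqrt{1+|\nabla u|^2}-\sqrt{n-1}\bigr)^2\geq 0
\]
is exactly the content of the paper's observation that $\phi(x)=(n+x^2)/(n\sqrt{1+x^2})$ attains its minimum $2\sqrt{n-1}/n$ at $x=\sqrt{n-2}$; the two formulations are algebraically identical. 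Your treatment of the constant case is in fact more scrupulous than the paper's: the paper simply asserts that $H\geq f_c(u)$ ``cannot happen because of \eqref{hy5.2}'', whereas you correctly isolate the residual borderline $c<0$, $u\equiv\pi/(2\sqrt{-c})$, $f_c(u)=0$, $H\equiv 0$, which the paper's phrasing does not strictly exclude (and which is tacitly avoided in the ``maximal'' clause by the strict inequality $u<\pi/(2\sqrt{-c})$).
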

\begin{proof}
In order to prove (i), first of all, observe that $u$ is a non-constant positive function defined on \m. Otherwise,
$\Sigma$ would be an open piece of the level set given by $d_p=u$, with $\Delta u=0$ and $\nabla u=0$, and by
Proposition \ref{comparacion5} its mean
curvature would be $H\geq f_c(u)$, which
cannot happen because of (\ref{hy5.2}). Now we apply Proposition \ref{comparacion5} to get
\[
\Delta u \leq -f_c(u)(n+|\nabla u|^2)+n H\sqrt{1+|\nabla u|^2}.
\]
Observe that $x=\sqrt{n-2}$ is a minimum of the function
\[
\phi(x)=\frac{n+x^2}{n\sqrt{1+x^2}},\quad \mathrm{with} \quad x\geq 0,
\]
with $\phi(\sqrt{n-2})=2\sqrt{n-1}/n$. Therefore
\[
\frac{2\sqrt{n-1}}{n}\leq \frac{n+|\nabla u|^2}{n\sqrt{1+|\nabla u|^2}}.
\]
Since $f_c(u)\geq 0$ (recall that we assume $u\leq\pi/(2\sqrt{-c})$ if $c<0$), then our hypothesis on $H$ implies that
\[
H\leq \frac{2 \sqrt{n-1}}{n} f_c(u)\leq\frac{f_c(u)(n+|\nabla u|^2)}{n\sqrt{1+|\nabla u|^2}}.
\]
That is,
\[
nH\sqrt{1+|\nabla u|^2}\leq f_c(u)(n+|\nabla u|^2)
\]
which yields $\Delta u\leq 0$.
As a consequence, $u$ is a non-constant positive superharmonic function on $\Sigma$ and hence it is hyperbolic.

To prove (ii) and (iii), simply observe that $f_0(u)=1/u>0$ and $f_c(u)=\sqrt{c}\coth(\sqrt{c}u)>\sqrt{c}$ on
$\Sigma$.
\end{proof}

Finally, using Proposition \ref{p1achro} and following the proof of Theorem \ref{MainTh4Hyperbolicity}, we are able to conclude the following result.
\begin{theorem}\label{MainTh4Hyperbolicityachro}
Let $M^{n+1}$ be an $(n+1)$-dimensional spacetime, $n\geq 2$, such that $K_M(\Pi)\geq c\geq 0$ for all timelike planes in $M$. Assume that there exists an
achronal spacelike hypersurface $N^n\subset M$ with negative semi-definite second fundamental form and $\mathcal{I}^{+}(N)\neq\emptyset$. Let $\psi:\Sigma^n\rightarrow M^{n+1}$ be a spacelike hypersurface such that
$\psi(\m)\subset\mathcal{I}^+(N)$, and let $v$ denote the Lorentzian distance function from $N$ along the hypersurface \m. Then
\begin{itemize}
\item[(i)] If $c>0$ and the future mean curvature of $\Sigma$ satisfies
\[
H\leq \frac{2 \sqrt{n-1}}{n}\sqrt{c}\tanh{(\sqrt{c}v)}
\]
(with $H<\sqrt{c}\tanh{(\sqrt{c}v)}$ at some point of \m\ if $n=2$), then $\Sigma$  is hyperbolic.
\item[(ii)] If $c=0$ and $H\leq 0$ (with $H<0$ at some point of \m\ if $n=2$), then $\Sigma$ is hyperbolic.
\end{itemize}
\end{theorem}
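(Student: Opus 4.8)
The plan is to mirror the proof of Theorem \ref{MainTh4Hyperbolicity}, replacing the distance $u=d_p$ by $v=d_N$ and the comparison of Proposition \ref{comparacion5} by its achronal counterpart, Proposition \ref{p1achro}. In every case the aim is to exhibit $v$ itself as a non-constant positive superharmonic function on $\Sigma$; by the characterization of hyperbolicity recalled at the beginning of this section (Grigor\'yan's lemma) this forces $\Sigma$ to be hyperbolic.

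First I would record the Laplacian comparison. Since $K_M(\Pi)\geq c$ and $N$ has negative semi-definite second fundamental form, Proposition \ref{p1achro} (in its $\leq$ alternative) gives
\[
\Delta v\leq -F_c(v)(n+|\nabla v|^2)+nH\sqrt{1+|\nabla v|^2}
\]
on $\Sigma$, where for $c\geq 0$ one has $F_c(v)=\sqrt{c}\tanh(\sqrt{c}\,v)\geq 0$ (with $F_0\equiv 0$). Note that here, in contrast to the Ricci comparison of Proposition \ref{p2achro}, no boundary term involving $H_N$ survives, precisely because negative semi-definiteness of $A_N$ renders the corresponding boundary contribution nonpositive, so that it is simply discarded. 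Exactly as in Theorem \ref{MainTh4Hyperbolicity}, the function $\phi(x)=(n+x^2)/(n\sqrt{1+x^2})$ attains its minimum $2\sqrt{n-1}/n$ at $x=\sqrt{n-2}$, so that
\[
\frac{2\sqrt{n-1}}{n}\leq\frac{n+|\nabla v|^2}{n\sqrt{1+|\nabla v|^2}}.
\]
In case (i), where $c>0$ and hence $F_c(v)\geq 0$, I would combine the hypothesis $H\leq (2\sqrt{n-1}/n)F_c(v)$ with the last inequality to obtain $nH\sqrt{1+|\nabla v|^2}\leq F_c(v)(n+|\nabla v|^2)$, whence $\Delta v\leq 0$. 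In case (ii), where $c=0$ and $F_0\equiv 0$, the comparison collapses to $\Delta v\leq nH\sqrt{1+|\nabla v|^2}\leq 0$ directly from $H\leq 0$. In both cases the positive function $v$ is superharmonic.

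What remains, and what I expect to be the main obstacle, is to guarantee that $v$ is non-constant. If $v$ were constant then $\nabla v\equiv 0$ and $\Delta v\equiv 0$, and Proposition \ref{p1achro} would force $H\geq F_c(v)$ throughout $\Sigma$. When $c>0$ this contradicts the hypothesis outright: for $n\geq 3$ one has $2\sqrt{n-1}/n<1$ and $F_c(v)>0$, so $H\leq(2\sqrt{n-1}/n)F_c(v)<F_c(v)$, while for $n=2$ the extra clause $H<\sqrt{c}\tanh(\sqrt{c}\,v)=F_c(v)$ at some point gives the contradiction immediately. When $c=0$, however, the forced inequality only reads $H\geq 0$, so constancy of $v$ would merely entail $H\equiv 0$; for $n=2$ the clause $H<0$ at some point still rules it out, but for $n\geq 3$ this comparison alone cannot exclude it, and $\Sigma$ would then be a maximal hypersurface lying in a single level set of $d_N$. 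Disposing of this residual degenerate case is exactly the delicate step, since the vanishing of $F_0$ destroys the strict gap that settles the matter when $c>0$. Once non-constancy of $v$ is secured, $v$ is a non-constant positive superharmonic function and Grigor\'yan's criterion yields the hyperbolicity of $\Sigma$.
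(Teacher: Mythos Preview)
Your approach is precisely what the paper indicates: the paper gives no separate proof but merely says to use Proposition~\ref{p1achro} and follow the proof of Theorem~\ref{MainTh4Hyperbolicity}, which is exactly what you do. Your handling of case~(i) and of case~(ii) when $n=2$ is complete and correct.

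The residual case you isolate ($c=0$, $n\geq 3$, $v$ constant) is a genuine gap, and the paper's one-line indication does not address it either. In Theorem~\ref{MainTh4Hyperbolicity} the issue evaporates because $f_0(u)=1/u>0$, so constancy of $u$ would force $H\geq f_0(u)>0$, contradicting $H\leq 0$ outright; this is why Theorem~\ref{MainTh4Hyperbolicity}(ii) carries no extra clause even for $n=2$. Here, by contrast, $F_0\equiv 0$, and constancy of $v$ only yields $H\geq 0$, hence $H\equiv 0$ (and, via the Hessian inequality in Proposition~\ref{p1achro}, $A\equiv 0$), with no contradiction. In that degenerate situation $\Sigma$ is a totally geodesic piece of a level set of $d_N$, and $v$ itself cannot witness hyperbolicity. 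In the model case $M=\mathbb{L}^{n+1}$, $N=\{x_{n+1}=0\}$, such a $\Sigma$ is an open piece of $\mathbb{R}^n$, which is hyperbolic for $n\geq 3$ and parabolic for $n=2$ (which explains the extra clause in (ii) for $n=2$), so the conclusion survives there; but a general argument covering this case is not supplied by the paper's sketch. Your identification of this as the delicate step is accurate and more scrupulous than the paper itself.
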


\end{document}